\DeclareMathOperator{\im}{im}
\DeclareMathOperator{\Id}{Id}
\DeclareMathOperator{\Hom}{Hom}
\DeclareMathOperator{\Ext}{Ext}
\newcommand{\norm}{\mathtt{N}}
\newcommand{\smatrix}[1]{\left(\!\begin{smallmatrix} #1\end{smallmatrix}\!\right)}
\newcommand{\transpose}[1]{#1^{\! T}}
\newcommand{\Tate}{\widehat{\Ext}}
\newcommand{\class}{\mathcal{C}}
\newtheorem{lemma}[equation]{Proposition}
\newtheorem{definition}[equation]{Definition}
\newtheorem{satz}[equation]{Theorem}
\newtheorem{korollar}[equation]{Corollary}
\theoremstyle{remark}
\newtheorem{bemerkung}[equation]{Remark}
\DeclareMathOperator{\tr}{tr}
\DeclareMathOperator{\coker}{coker}
\DeclareMathOperator{\Mat}{Mat}
\newcommand{\id}{\textrm{id}}
\newcommand{\beweis}[1][]{\textit{Proof#1:}\\[2pt]}
\newcommand{\idx}[1]{{#1}}  
\newcommand{\adx}[1]{{#1}}  
\newcommand{\vdx}[1]{{(#1)}}
\newcommand{\hochschild}{H\! H}
\newcommand{\svec}{\mathbf}
\newcommand{\rem}[1]{}
\numberwithin{equation}{section}
\begin{document}
\SelectTips{cm}{} 

\title[Secondary multiplication in Tate Cohomology of certain $p$-groups]{Secondary multiplication in Tate Cohomology\\ of certain $p$-groups}
\author{Martin Langer}

\begin{abstract}
Let $k$ be a field and let $G$ be a finite group. By a theorem of D.~Benson, H.~Krause and S.~Schwede, there is a canonical element in the Hochschild cohomology of the Tate cohomology $\gamma_G\in \hochschild^{3,-1} \hat{H}^*(G)$ with the following property: Given any graded  $\hat{H}^*(G)$-module $X$, the image of $\gamma_G$ in $\Ext^{3,-1}_{\hat{H}^*(G)} (X,X)$ is zero if and only if $X$ is isomorphic to a direct summand of $\smash{\hat{H}^*(G,M)}$ for some $kG$-module $M$.

Suppose that the characteristic of $k$ is $p\neq 3$. We show that $\gamma_G$ is trivial if $G$ is a (finite) abelian $p$-group of $p$-rank at least $3$. Furthermore, $\gamma_G$ is non-trivial if $G$ is an abelian $p$-group of $p$-rank $2$, or if $p=2$ and $G$ is the quaternion group with $8$ elements. 
\end{abstract}
\maketitle

\section{Introduction}
The starting point of this paper is the following theorem of D.~Benson, H.~Krause and S.~Schwede:
\begin{satz}\label{bkstheorem} \cite{bks} 
Let $k$ be a field, $G$ a finite group, and let $\hat{H}^*(G)$ denote the graded Tate cohomology algebra of $G$ over $k$. Then there exists a canonical element in Hochschild cohomology of $\hat{H}^*(G)$
\[ \gamma_G\in \hochschild^{3,-1} \hat{H}^*(G), \]
such that for any graded $\hat{H}^*(G)$-module $X$, the following are equivalent:
\begin{itemize}
\item[(i)] The image of $\gamma_G$ in $\Ext^{3,-1}_{\hat{H}^*(G)} (X,X)$ is zero.
\item[(ii)] The exists a $kG$-module $M$ such that $X$ is a direct summand of the graded $\hat{H}^*(G)$-module $\hat{H}^*(G,M)$.
\end{itemize}
\end{satz}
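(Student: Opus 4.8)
\emph{Proof strategy.} The plan is to realise $\hat H^*(G)$ as the homotopy ring of a structured ring object and to extract $\gamma_G$ as its first secondary operation. Work in the (big) stable module category $\mathrm{StMod}(kG)$: it is the homotopy category of a stable model category, the trivial module $k$ is a compact generator, and $\mathcal E := \underline{\mathrm{End}}(k)$ is an $A_\infty$- (in fact $E_\infty$-) ring with $\pi_{-*}\mathcal E \cong \hat H^*(G)$ as graded rings. By a Keller/Schwede--Shipley tilting theorem the functor $N \mapsto \underline{\mathrm{Hom}}(k,N)$ is an equivalence $\mathrm{StMod}(kG)\simeq \mathcal D(\mathcal E)$ of triangulated (indeed module) categories, and it sends a $kG$-module $M$ to an $\mathcal E$-module with homotopy $\hat H^*(G,M)$. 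Hence \emph{every} $\mathcal E$-module has homotopy of the form $\hat H^*(G,M)$, and statement (ii) is equivalent to: $X$ is a direct summand of $\pi_* N$ for some $\mathcal E$-module $N$. So it suffices to prove the abstract assertion for an arbitrary $A_\infty$-ring $\mathcal E$ with $\pi_*\mathcal E = \Lambda$.

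\emph{Definition of the class.} Transfer the $A_\infty$-structure of $\mathcal E$ along a homotopy retraction onto $\Lambda = H^*\mathcal E$, obtaining a minimal $A_\infty$-structure $(m_n)_{n\geq 2}$ on $\Lambda$ with $m_1 = 0$ and $m_2$ the Tate cohomology product. Then $m_3\colon \Lambda^{\otimes 3}\to \Lambda$ has internal degree $2-3=-1$, and the arity-$3$ $A_\infty$-relation says exactly that $m_3$ is a Hochschild $3$-cocycle; set $\gamma_G := [m_3]\in \hochschild^{3,-1}(\Lambda)$. Two minimal transfers are connected by an $A_\infty$-isomorphism that is the identity in arity $1$ and on $m_2$; its arity-$2$ component is a Hochschild $2$-cochain whose coboundary measures the change in $m_3$, so $\gamma_G$ is well defined (equivalently, $\gamma_G$ is the universal triple Massey product of $\mathcal E$, or the first $k$-invariant of $\mathcal E$). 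For a $\Lambda$-module $X$, the image $\gamma_G|_X$ under the canonical action $\hochschild^*(\Lambda) = \Ext^*_{\Lambda^e}(\Lambda,\Lambda)\to \Ext^*_\Lambda(X,X)$ is obtained by feeding a graded-projective resolution of $X$ into $m_3$.

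\emph{The easy implication (ii) $\Rightarrow$ (i).} Via $A_\infty$-modules: for a graded $\Lambda$-module $Y$, the class $\gamma_G|_Y\in \Ext^{3,-1}_\Lambda(Y,Y)$ is precisely the obstruction to $Y$ underlying a module over the truncated $A_\infty$-algebra $(\Lambda,m_2,m_3)$ (an ``$A_3$-module''). If $Y = \pi_* N$ for an actual $\mathcal E$-module $N$, then transferring the module structure of $N$ equips $Y$ with a full $A_\infty$-$\Lambda$-module structure, a fortiori an $A_3$-module structure, so $\gamma_G|_Y = 0$. Finally, if $X$ is a direct summand of such a $Y$, then by naturality of the $\hochschild^*(\Lambda)$-action along the split monomorphism $X\hookrightarrow Y$ and retraction $Y\to X$, the class $\gamma_G|_X$ is the corresponding retract of $\gamma_G|_Y = 0$, hence zero.

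\emph{The hard implication (i) $\Rightarrow$ (ii).} Suppose $\gamma_G|_X = 0$, so $X$ underlies an $A_3$-$\Lambda$-module. The remaining task is to promote this to an honest $\mathcal E$-module realising $X$ up to a direct summand, equivalently to extend the $A_3$-structure step by step to a full (minimal) $A_\infty$-$\mathcal E$-module structure, whose associated $\mathcal E$-module $N$ then has $X$ as a retract of $\pi_* N$. At the $n$-th step ($n\geq 4$) the obstruction to extending lies in $\Ext^{n,2-n}_\Lambda(X,X)$ and a priori need not vanish; the main obstacle is precisely handling these higher obstructions, and this is exactly where the ``direct summand'' clause does its work — one shows they can be killed by enlarging $X$ by a free (hence harmless) summand at each stage, because once $\gamma_G$ acts trivially the later obstructions become formal consequences of it and of the lower data. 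Organising this can be done either as a single obstruction theory on the ``twisting cochain'' assembling a $\Lambda$-projective resolution of $X$ into a totalising $\mathcal E$-module (the primary obstruction being $\gamma_G|_X$), or by passing to a two-stage Postnikov approximation of $\mathcal E$ and checking that realisability over $\mathcal E$ up to summands is detected there. Everything else — existence and essential uniqueness of the maps realising the resolution differentials, collapse of the relevant homotopy spectral sequence, and compatibility of the naturality maps — is routine bookkeeping with resolutions, homotopies and signs.
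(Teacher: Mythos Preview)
The paper does not prove this theorem at all; it is quoted from \cite{bks} as the starting point, and only the \emph{construction} of $\gamma_G$ (\S\ref{kanonischel}) is recalled. So there is no ``paper's own proof'' to compare against, only the original Benson--Krause--Schwede argument.

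Your framework is the right one, and the construction of $\gamma_G$ as the Hochschild class of $m_3$ for a minimal $A_\infty$-model of the endomorphism dga is exactly how $\gamma_G$ arises; the easy direction (ii) $\Rightarrow$ (i) is fine.

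The hard direction (i) $\Rightarrow$ (ii), however, has a genuine gap. You envisage an infinite tower of obstructions in $\Ext^{n,2-n}_\Lambda(X,X)$ for $n\ge 4$ and assert that ``once $\gamma_G$ acts trivially the later obstructions become formal consequences of it'', to be killed by adding free summands, with everything else ``routine bookkeeping''. This is neither how the BKS argument goes nor obviously true: there is no mechanism offered for why the higher $m_n$-obstructions should be consequences of $m_3$, and for a general $A_\infty$-algebra they are not. The actual argument in \cite{bks} is \emph{finite}: one takes only the first three steps $F_2\to F_1\to F_0$ of a free resolution of $X$, realises the $F_i$ by sums of shifts of the generator, and observes that the single obstruction to threading these into a three-step ``Postnikov system'' in the triangulated category is exactly $\gamma_G\vert_X$. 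When that vanishes one obtains an object $N$ together with a map realising $F_0\to X$ on homotopy; a short exact-sequence chase then shows that $X$ is a retract of $\pi_*N$. The ``direct summand'' clause thus enters not by stabilising away higher obstructions, but because the three-step construction only pins down $\pi_*N$ up to an extra piece. Your sketch would need to be replaced by this finite argument (or an equivalent one) to close the gap.
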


In this paper we will compute $\gamma_G$ explicitly for several groups $G$. The plan is as follows: In the first section we will briefly recall the definitions needed in Theorem~\ref{bkstheorem}; most of this part is taken from \cite{bks}, and the reader interested in details should consult that source. In the second section we turn to a first example, namely $G=Q_8$, the quaternion group with $8$ elements. The last two sections are devoted to the case of (finite) abelian $p$-groups.

\subsection*{Acknowledgments} This paper is part of a Diploma thesis written at the Mathematical Institute, University of Bonn. I would like to thank my advisor Stefan Schwede for suggesting the subject and all the helpful comments on the project.

\subsection{Notations and conventions}
All occurring modules will be right modules. We shall often work over a fixed ground field $k$; then $\otimes$ means tensor product over $k$. Whenever convenient, we write $(a_1,a_2,\dots,a_n)$ instead of $a_1\otimes a_2\otimes\dots\otimes a_n$. If $G$ is a group, then $k$ is often considered as a trivial $kG$-module. When no confusion can arise, $k$ will be used as an index variable at the same time.

Let $R$ be a ring with unit, and let $M$ be a $\mathbb{Z}$-graded $R$-module. The degree of every (homogeneous) element $m\in M$ will be denoted by $|m|$. For every integer $n$ the module $M[n]$ is defined by $M[n]^j=M^{n+j}$ for all $j$. Given two such modules $M$ and $L$, a morphism $f:L\longrightarrow M$ is a family $f^j:L^j\longrightarrow M^j$ of $R$-module homomorphisms. The group of all these morphisms is denoted by $\Hom_R(L,M)$. Furthermore, we have $\Hom_R^m (L,M) = \Hom_R(L,M[m])$, the morphisms of degree $m$. The graded module $L\otimes M$ is given by $(L\otimes M)^m =\bigoplus_{i+j=m} L^i\otimes M^j$. If $M$ is a differential graded $R$-module with differential $d$, then the differential of $M[n]$ is given by $(-1)^n d$.

\subsection{Tate Cohomology}\label{tatekohomologie}
Let us recall briefly the definition and basic properties of Tate cohomology. Let $k$ be a field, and let $G$ be a finite group. Then $L=kG$ is a self-injective algebra (i.e.~the classes of projective and injective right-$L$-modules coincide). For any $L$-module $N$ we get a complete projective resolution $\hat{P}_*$ of $N$ by splicing together a projective and an injective resolution of $N$:
\[
\xymatrix@!@=2pt{
\dots & &
\hat{P}_{-2} \ar[ll] &&
\hat{P}_{-1} \ar[ll] &&
\hat{P}_{0} \ar[ll]\ar@{->>}[dl]  &&
\hat{P}_{1} \ar[ll]  &&
\hat{P}_{2} \ar[ll]  &&
\dots  \ar[ll]  \\
&&& & & N\ar@{_{(}->}[ul]
} \]
Given another $L$-module $M$, we can apply the functor $\Hom_L(\_,M)$ to $\hat{P}_*$; then Tate cohomology is defined to be the cohomology groups of the resulting complex:
\begin{equation}\label{tatedef} \Tate_L^n(N,M)=H^n(\Hom_L(\hat{P}_*,M)) \quad\text{for all $n\in\mathbb{Z}$.}  \end{equation}
For arbitrary $L$-modules $X,Y$ and $Z$, we have a cup product
\begin{equation} \label{cupprodukt} \Tate_L^m(Y,Z)\otimes \Tate_L^n(X,Y) \longrightarrow \Tate_L^{m+n}(X,Z), \end{equation}
see e.g.~\cite{Carlson}, §~6. Therefore, $\hat{H}^*(G)=\hat{H}^*(G,k)=\Tate_{kG}^*(k,k)$ is a graded algebra, and $\hat{H}^*(G,M)=\Tate_{kG}^*(k,M)$ is a graded $\hat{H}^*(G)$-module for every $kG$-module $M$. We call a graded $\hat{H}^*(G)$-module $X$ \emph{realisable} if it is isomorphic to $\hat{H}^*(G,M)$ for some $kG$-module $M$.

There is another way of describing the product of $\hat{H}^*(G)$, in terms of $\hat{P}_*$. Consider the differential graded algebra $A=\Hom_L^*(\hat{P}_*,\hat{P}_*)$, which (in degree $n$) is given by
\[ A^n=\prod_{j\in\mathbb{Z}} \Hom_L(\hat{P}_{j+n},\hat{P}_j), \]
and the differential $d:A^n\longrightarrow A^{n+1}$ is defined to be
\[ (df)_j=\partial\circ f_{j+1}-(-1)^n f_j\circ \partial. \]
Here $\partial$ denotes the differential of $\hat{P}_*$. With this definition, the cocycles of $A$ (of degree $n$) are exactly the chain transformations $\hat{P}[n]\rightarrow\hat{P}$, and two cocycles differ by a coboundary if and only if they are chain homotopic. Using standard arguments from homological algebra, one shows that the following map is an isomorphism of $k$-vector spaces:
\begin{equation} \label{tateiso}
\begin{split}
 H^nA &\stackrel{\cong}{\longrightarrow} \Tate^n_L(k,k) \\
 \left[f\right] &\;\mapsto\;  [ \epsilon\circ f_0]
\end{split}
\end{equation}
Here $\epsilon:P_0\longrightarrow k$ is the augmentation. This isomorphism is compatible with the multiplicative structures.

\subsection{Hochschild Cohomology}
We now give a short review of Hochschild cohomology. Let $\Lambda$ be a graded algebra over the field $k$, and let $M$ be a graded $\Lambda$-$\Lambda$-bimodule, the elements of $k$ acting symmetrically. Define a cochain complex $C^{\bullet,*}(\Lambda,M)$ by
\[ C^{n,m}(\Lambda,M)=\Hom_k^m(\Lambda^{\otimes n},M), \]
with a differential $\delta$ of bidegree $(1,0)$ given by
\begin{multline*} (\delta\varphi)(\lambda_1,\dots,\lambda_{n+1})=(-1)^{m|\lambda_1|}\lambda_1\varphi(\lambda_2,\dots,\lambda_{n+1}) \\
+\sum_{i=1}^n (-1)^i\varphi(\lambda_1,\dots,\lambda_i\lambda_{i+1},\dots,\lambda_{n+1}) +(-1)^{n+1}\varphi(\lambda_1,\dots,\lambda_n)\lambda_{n+1}.
\end{multline*}
The Hochschild cohomology groups $\hochschild^{*,*}(\Lambda,M)$ are defined as the cohomology groups of that complex:
\[ \hochschild^{s,t}(\Lambda,M)=H^s(C^{*,t}(\Lambda,M)). \]
In particular, we can regard $M=\Lambda$ as a bimodule over itself; then one writes $\hochschild^{s,t}(\Lambda)=\hochschild^{s,t}(\Lambda,\Lambda)$. For example, an element of $\hochschild^{3,-1}(\Lambda)$ is represented by a family of $k$-linear maps
\[ m=\{m_{i,j,l}:\Lambda^i\otimes\Lambda^j\otimes \Lambda^l\longrightarrow
        \Lambda^{i+j+l-1}\}_{i,j,l\in\mathbb{Z}} \]
satisfying the cocycle relation
\[ (-1)^{|a|}a\cdot m(b,c,d)-m(ab,c,d)+m(a,bc,d)-m(a,b,cd)+m(a,b,c)\cdot d=0 \]
for all $a,b,c,d\in\Lambda$.

Whenever $X$ and $Y$ are $\Lambda$-$\Lambda$-bimodules, one has a cup product pairing
\[ \cup:\Hom_\Lambda(X,Y)\otimes \hochschild^{*,*}\Lambda\longrightarrow \Ext^{*,*}_\Lambda(X,Y). \]
Here $\Ext^{s,t}_\Lambda(X,Y)$ is defined to be $\Ext^{s}_\Lambda(X,Y[t])$. In particular, we have the map
\begin{eqnarray*}
 \hochschild^{3,-1} \hat{H}^*(G) &\longrightarrow& \Ext^{3,-1}_{\hat{H}^*(G)} (X,X) \\
   \phi & \mapsto & \Id_X\cup\,\phi
\end{eqnarray*}
for every $\hat{H}^*(G)$-module $X$. This is the map occurring in the statement of Theorem~\ref{bkstheorem}.

\subsection{The canonical element $\gamma$} \label{kanonischel}
We are now going to describe the construction of the element $\gamma$ mentioned in Theorem~\ref{bkstheorem}. More generally, we will construct an element $\gamma_A\in\hochschild^{3,-1} H^*A$ for every differential graded algebra $A$ over $k$; then we can take $A$ to be the endomorphism algebra of a complete projective resolution of $k$ as a trivial $kG$-module to get $\gamma_G\in \hochschild^{3,-1} \hat{H}^*(G)$.

For a dg-algebra $A$ consider $H^*A$ as a differential graded $k$-module with trivial differential. Then choose a morphism of dg-$k$-modules $f_1:H^*A\longrightarrow A$ of degree $0$ which induces the identity in cohomology. This is the same as choosing a representative in $A$ for every class in $H^*A$ in a $k$-linear way. For every two elements $x,y\in H^*A$, $f_1(xy)-f_1(x)f_1(y)$ is null-homotopic; therefore, we can choose a morphism of graded modules
\[ f_2:H^*A\otimes H^*A\longrightarrow A \]
of degree $-1$ such that for all $x,y\in H^*A$ we have
\[ df_2(x,y)=f_1(xy)-f_1(x)f_1(y). \]
Then for all $a,b,c\in H^*A$,
\begin{equation} \label{mkonstruktion}
 f_2(a,b)f_1(c)-f_2(a,bc)+f_2(ab,c)-(-1)^{|a|}f_1(a)f_2(b,c) 
\end{equation}
is a cocycle in $A$, the cohomology class of which will be denoted by $m(a,b,c)$. This defines a map $m:(H^*A)^{\otimes 3}\longrightarrow H^*A$ of degree $-1$. An explicit computation shows that $m$ is a Hochschild cocycle, thereby representing a class $\gamma_A\in\hochschild^{3,-1} H^*A$. This class is independent of the choices made. 

\subsection{Cyclic $p$-groups}  \label{szyklischegruppen}
Let us go through an example, namely that of cyclic $p$-groups. In fact, this is a special case of an example worked out in \cite{bks}, §~7. Since we will need parts of the computation later on, we recall it here in detail. Denote by $C_n=\mathbb{Z}/n\mathbb{Z}$ the cyclic group of order $n$.

\begin{satz} \label{zyklsatz} Let $k$ be a field of characteristic $p$ and $n=p^\nu$. The Tate cohomology of $C_2$ is a Laurent polynomial ring on a $1$-dimensional class $x$,
\[ \hat{H}^*(C_2,k)=k[x^{\pm 1}]. \]
If $n\geq 3$, then Tate cohomology of $C_{n}$ is the tensor product of an exterior algebra on a $1$-dimensional class $x$ and a Laurent polynomial ring on a $2$-dimensional class $y$,
\[ \hat{H}^*(C_n,k)=\Lambda(x)\otimes k[y^{\pm 1}]. \]
The canonical element
$\gamma_{C_n} \in \hochschild^{3,-1} \hat{H}^*(C_n,k)$
is non-trivial for $n=3$ and trivial for all other values of $n$.
\end{satz}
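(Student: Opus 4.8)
The plan is to compute $\gamma_{C_n}$ directly from the explicit complete resolution of $k$ over $kC_n$, following the recipe of Subsection~\ref{kanonischel}: choose $f_1$, choose $f_2$, and evaluate the cocycle \eqref{mkonstruktion}. For $L=kC_n$ with generator $g$, write $t=g-1$ and $\norm=1+g+\dots+g^{n-1}$; then $L\cong k[t]/(t^n)$ when $p\mid n$, and the standard complete resolution $\hat{P}_*$ is the $2$-periodic (for $n\geq 3$) complex with each $\hat{P}_j=L$ and differentials alternating between multiplication by $t$ and by $\norm$. The endomorphism dg-algebra $A=\Hom_L^*(\hat{P}_*,\hat{P}_*)$ has $A^0=\prod_j L$, and a convenient basis of generating cocycles gives $x$ (degree $1$), $y$ (degree $2$) and $y^{-1}$; one records carefully the products $x^2$, $xy$, etc., at the chain level, noting in particular that $x^2$ is \emph{not} zero on the nose but only up to a boundary, and that the required homotopy involves the element $\norm$, hence a factor that behaves differently according to whether $p=2$, $p=3$, or $p>3$.

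The key computational steps, in order, are: (1) fix $f_1$ by picking the obvious cocycle representatives $1,x,y,y^{-1},xy,y^2,\dots$; (2) identify for which pairs $(u,v)$ of basis elements $f_1(uv)-f_1(u)f_1(v)$ is nonzero, the only essential case being $x\cdot x$ (and its translates $x\cdot xy^m$), and choose $f_2(x,x)$ to be an explicit degree-$1$ element of $A$ with $d f_2(x,x)=f_1(x^2)-f_1(x)^2$ — here $f_1(x^2)=0$ in the exterior-times-Laurent case, so $f_2(x,x)$ is a homotopy killing $f_1(x)^2$, and it is at this point that the prime $p$ enters, via $\norm\equiv t^{n-1}$ and the binomial coefficient $\binom{n}{1}$-type identities; extend $f_2$ to all other pairs by zero or by forced choices; (3) plug $a=b=c=x$ (and then the $y$-twisted versions) into \eqref{mkonstruktion} and read off $m(x,x,x)\in \hat{H}^{3-1}=\hat{H}^{2}=k\cdot y$ (resp. $k\cdot x$ when $n=2$); (4) check that $m$ is nonzero precisely when $n=3$ by seeing that the scalar produced is a unit in $k$ iff $p=3$ with $\nu=1$, and is zero for $n=2$ (degree reasons: $\hat{H}^2(C_2)=k x^2$, but the relevant coefficient vanishes) and for $n=p^\nu$ with $\nu\geq 2$ or $p\geq 5$ (the homotopy $f_2(x,x)$ can be chosen to make the triple product a boundary); (5) finally argue the class is well-defined independently of the extension of $f_2$, which is guaranteed abstractly by Subsection~\ref{kanonischel} but should be sanity-checked here.

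The main obstacle I expect is step (2)–(3): producing the homotopy $f_2(x,x)$ in the right normalization and then keeping track of signs and of the $\prod_j$-indexing in $A^\bullet$ when multiplying the three chain maps, so that the final scalar multiplying $y$ is pinned down unambiguously rather than merely ``up to a nonzero constant''. A secondary subtlety is the genuinely different structure at $p=2$: there $\hat{H}^*(C_2)=k[x^{\pm1}]$ is a Laurent algebra with $x$ in degree $1$, so there is no separate $y$, and one must check directly that $m(x,x,x)\in \hat{H}^2=kx^2$ is null-homotopic; this case, together with $\nu\geq 2$, is where one shows triviality by exhibiting a better choice of $f_2$ (essentially because $t^{p^\nu-1}$ is itself a ``higher'' element when $\nu\geq 2$, giving enough room to split). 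I would organize the write-up so that the generic vanishing ($p\neq 3$, or $\nu\geq 2$) is handled by one uniform homotopy argument, and the case $n=3$ is an explicit three-line calculation exhibiting $m(x,x,x)=y\neq 0$.
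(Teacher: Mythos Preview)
Your outline matches the paper's approach closely: same resolution, same choice of $f_1$ on the monomial basis, same identification of $f_2$ with a null-homotopy $\bar q$ for $\bar x^2$, and the same reduction of $m$ to the odd-odd-odd triples. Two points need correction, and one simplification is available.

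\textbf{The $n=3$ case is not finished.} Exhibiting $m(x,x,x)=y\neq 0$ shows that the Hochschild \emph{cocycle} $m$ is nonzero, not that the Hochschild \emph{class} $\gamma_{C_3}$ is nonzero. You still have to rule out $m=dg$ for some $(2,-1)$-cochain $g$. This is short: since $x^2=0$ in $\hat H^*(C_3)$, linearity gives $g(x^2,x)=g(x,x^2)=0$, and $g(x,x)\in\hat H^1=k\cdot x$, so
\[
(dg)(x,x,x)=-x\,g(x,x)-g(x,x)\,x\in k\cdot x^2=0,
\]
which cannot equal $y$. The paper takes a different route here: it defers non-triviality to the Massey-product statement $0\notin\langle x,x,x\rangle$ proved later (which in turn yields a non-realisable module). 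Either argument works; yours is more self-contained once the missing two lines are added.

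\textbf{The $n=2$ case is simpler than you suggest.} There is nothing to ``check directly'': with $\hat H^*(C_2)=k[x^{\pm1}]$ one may take $f_1(x^i)=\bar x^{\,i}$, which is already multiplicative on the nose, so $f_2=0$ and $m=0$ identically. No degree-counting or coefficient-chasing is needed.

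\textbf{Minor inaccuracies.} The homotopy $\bar q$ witnessing $\bar x^2\simeq 0$ is multiplication by $t^{n-3}$ (in the paper's variable, $z^{n-3}$), not by $\norm=t^{n-1}$, and no binomial identities enter. The trichotomy is then transparent: for $n\geq 4$ the resulting $m(x,x,x)$ is represented by multiplication by $t^{n-3}$, which is itself null-homotopic via $t^{n-4}$, so $m=0$; for $n=3$ it is multiplication by $1$, i.e.\ $\bar y$, giving $m(x,x,x)=y$. Framing this as ``the scalar is a unit in $k$'' is misleading; the distinction is whether $n-3\geq 1$.
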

\begin{proof}
The general method is the following: First of all one constructs a complete projective resolution of $k$ as trivial $kG$-module. Then one finds ('sufficiently many') cocycles of the endomorphism-dga of this projective resolution. Furthermore, one has to construct homotopies representing relations in $\hat{H}^*(G,k)$. From these cocycles and homotopies we define maps $f_1$ and $f_2$ as in §~\ref{kanonischel}. Finally we determine the class of the expression \eqref{mkonstruktion} for all triples $(a,b,c)$.

Let us start with the projective resolution. Instead of $kC_n$ we will use the truncated polynomial algebra $L=k[z]/z^n\cong kC_n$. For all $j$ put $\hat{P}_j=L$, the free $L$-module of rank $1$. Define the differential $\partial:\hat{P}_{j+1}\longrightarrow \hat{P}_j$ to be multiplication by $z$ for $j$ even and multiplication by $-z^{n-1}$ for $j$ odd. 
This is a minimal projective resolution of $k$ as trivial $L$-module. Applying $\Hom_L(\_,k)$ to this sequence we get the complex
\begin{equation} \label{zyklkseq1} 
 \dots \stackrel{0}{\longrightarrow} k \stackrel{0}{\longrightarrow} k \stackrel{0}{\longrightarrow} k \stackrel{0}{\longrightarrow} \dots 
\end{equation}
Hence, $\Tate^j_L(k,k)\cong k$ for all $j\in\mathbb{Z}$. Let $A$ be the endomorphism-dga of $\hat{P}_*$. 

Next we determine the multiplicative structure of $\Tate^j_L(k,k)\cong H^*A$ by writing down cocycles in $A$. 
Let $\Bar x:\hat{P}[1]\longrightarrow \hat{P}$ be the chain map of degree $1$ given by the identity map in even dimensions and multiplication by $z^{n-2}$ in odd dimensions.
This chain map satisfies $d\Bar{x}=0$. The class of $\Bar{x}$ in $H^*A$ will be denoted by $x$. The map $\Bar{x}:\hat{P}_1\longrightarrow \hat{P}_0$ is the identity; hence, under the isomorphism \eqref{tateiso}, $x$ is mapped to a generator of $\Tate_L^1(k,k)\cong k$. Define $\Bar{y}:\hat{P}[2]\longrightarrow \hat{P}$ to be the chain map of degree $2$ given by the identity in every dimension. Since $\hat{P}_*$ is $2$-periodic, this is a cocycle of $A$ representing some class $y\in H^2A$, which is invertible (because $\Bar{y}$ is invertible). Therefore, $\Tate_L^{2i+j}(k,k)$ is generated by $y^ix^j$ (for all $i\in\mathbb{Z},j=0,1$).

The multiplicative structure is determined as soon as we know $x^2$. 
The cocycle $\Bar{x}^2:\hat{P}[2]\longrightarrow\hat{P}$ is given by multiplication by $z^{n-2}$ is every dimension. If $n=2$, this is the same as $\Bar{y}$; therefore $y=x^2$ in this case:
\[ \hat{H}^*(C_2,k)\cong k[x^{\pm 1}] \]
If $n\geq 3$, then $\Bar{x}^2$ is null-homotopic via the homotopy $\Bar{q}$ which is the zero map in even dimensions and multiplication by $z^{n-3}$ in odd dimensions.
Therefore, 
\[ \hat{H}^*(C_n,k)\cong k[x,y^{\pm 1}]/(x^2)\cong \Lambda(x)\otimes k[y^{\pm 1}]. \]
Now we have to choose the maps $f_1$ and $f_2$. In order to define the $k$-linear map $f_1:H^*A\rightarrow A$, it suffices to do this on a $k$-basis. Let us take
\[ f_1(x^\epsilon y^i)=\Bar{x}^\epsilon \Bar{y}^i \]
for all $\epsilon\in\{0,1\}$ and $i\in\mathbb{Z}$. 

If $n=2$ then this is the map $x^i\mapsto \Bar{x}^i$ which is clearly multiplicative. Thus we can choose $f_2=0$, which leads to $\gamma_{C_2}=0$. If $n\geq 3$, we can define $f_2$ as follows:
\begin{align*}
f_2(y^i,y^j)&=f_2(xy^i,y^j)=f_2(y^i,xy^j)=0 \\
f_2(xy^i,xy^j)&=\bar{q}\bar{y}^{i+j} 
\end{align*}
Plugging this into \eqref{mkonstruktion}, one sees that the map $m(a,b,c)$ vanishes unless all $a$, $b$ and $c$ are of odd degree. In this case, $m(xy^i, xy^j, xy^l)$ is represented by
\begin{multline*} f_2(xy^i,xy^j)f_1(xy^l)-f_2(xy^i,x^2y^{j+l})+f_2(x^2y^{i+j},xy^l)-(-1)f_1(xy^i)f_2(xy^j,xy^l) \\
=\bar{q}\bar{x}\bar{y}^{i+j+l}+\bar{x}\bar{q}\bar{y}^{i+j+l}=(\bar{q}\bar{x}+\bar{x}\bar{q})\bar{y}^{i+j+l},
\end{multline*}
which is multiplication by $z^{n-3}$ in all degrees. If $n\geq 4$, this map is null-homotopic (e.g.~via the homotopy which is multiplication by $z^{n-4}$ in odd degrees and zero in even degrees). Hence $m=0$, which implies $\gamma_{C_n}=0$. The fact that $\gamma_{C_3}$ is non-trivial will follow from Theorem~\ref{nichtrealisierbar3}.
\end{proof}

\newcommand{\twoprod}[4]{%
\xymatrix@C=57pt{%
 \hat{P}_2 \ar[d]^{#1}
 & \hat{P}_3 \ar[l]_{\smatrix{I+E \\ J+E}}\ar[d]^{#2}
 & \hat{P}_4 \ar[l]_{\norm}\ar[d]^{#3}
 & \hat{P}_5 \ar[l]_{\smatrix{I+E & J+E}}\ar[d]^{#4}
 & \hat{P}_6 \ar[l]_{\smatrix{J+E&K'+E\\ K+E&I+E}}\ar[d]^{#1} \\
 \hat{P}_0
 & \hat{P}_1 \ar[l]^{\smatrix{I+E & J+E}}
 & \hat{P}_2 \ar[l]^{\smatrix{J+E & K'+E \\ K+E & I+E}}
 & \hat{P}_3 \ar[l]^{\smatrix{I+E \\ J+E}}
 & \hat{P}_4 \ar[l]^{\norm}
}}

\section{The quaternion group $Q_8$}\label{secquagruppe}

Now we apply the theory described in the previous section to a particular group, namely the quaternion group in $8$ elements, given by
\[ G=Q=Q_8=\left< I,J,K \mid I^2=J^2=K^2, IJ=K \right>. \]
The unit we denote by $E$, and the elements usually called $-E$, $-I$, $-J$ and $-K$ are written as $E'$ etc.~to prevent confusion with the addition in the group ring. Thus we have
$Q=\{E,I,J,K,E',I',J',K'\}.$
Let $k$ be a field of characteristic $2$.
Then the Tate cohomology ring $\hat{H}^*(Q)$ is well-known; it is given by
\[ \hat{H}^*(Q)=\Tate_L^*(k,k)\cong k[x,y,s^{\pm 1}]/(x^2+y^2=xy,x^3=y^3=0,x^2y=xy^2) \]
with $|x|=|y|=1, |s|=4$ (see e.g.~\cite{CartEile},~XII § 11, and \cite{AdemMilgram},~IV Lemma 2.10).
The main goal of this section is to prove the following theorem.
\begin{satz}\label{haupttheorem}
The element $\gamma_Q\in \hochschild^{3,-1} \hat{H}^*(Q)$ is non-trivial, and the cokernel of the map
\[ \hat{H}^*(Q)[-1] \oplus \hat{H}^*(Q)[-1] \xrightarrow{\smatrix{y & x+y \\ x & y}} \hat{H}^*(Q)\oplus \hat{H}^*(Q) \]
is a graded $\hat{H}^*(Q)$-module which is not a direct summand of a realisable one.
\end{satz}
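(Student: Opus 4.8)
The plan is to deduce the entire statement from Theorem~\ref{bkstheorem}. Write $R=\hat H^*(Q)$, and let $X$ be the cokernel module in the statement, presented by the degree-$0$ map $M\colon R[-1]^2\to R^2$ with $M=\smatrix{y & x+y\\ x& y}$. By Theorem~\ref{bkstheorem}, a graded $R$-module is a direct summand of a realisable one exactly when its cup product with $\gamma_Q$ vanishes; hence it suffices to prove the \emph{single} inequality
\[ \Id_X\cup\gamma_Q\;\neq\;0 \quad\text{in }\Ext^{3,-1}_R(X,X). \]
Indeed this at once shows that $X$ — and anything having $X$ as a summand — is not realisable, and a fortiori it forces $\gamma_Q\neq0$ in $\hochschild^{3,-1}R$. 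So the proof reduces to (a)~exhibiting an explicit Hochschild cocycle representing $\gamma_Q$, and (b)~exhibiting an explicit $R$-projective resolution of $X$ against which to evaluate the cup product.

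For (a) I would run the recipe of \S\ref{kanonischel} verbatim, as in the proof of Theorem~\ref{zyklsatz}. Since $Q$ has periodic cohomology of period $4$, the complete projective resolution $\hat P_*$ of the trivial module $k$ over $L=kQ$ is the $4$-periodic complex with $\hat P_{4i}=\hat P_{4i+3}=L$ and $\hat P_{4i+1}=\hat P_{4i+2}=L^2$ (for all $i\in\mathbb Z$), whose differentials are the $4$-periodic repetition of $\smatrix{I+E & J+E}$, $\smatrix{J+E & K'+E\\ K+E & I+E}$, $\smatrix{I+E\\ J+E}$ and multiplication by the norm element $\norm$. Let $A=\Hom_L^*(\hat P_*,\hat P_*)$. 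In $A$ one records cocycles representing the algebra generators: $\bar s$ of degree $4$, given by the periodicity isomorphism (hence invertible), and $\bar x,\bar y$ of degree $1$ lifting the two coordinate projections $\hat P_1=L^2\to L=\hat P_0$. For each defining relation of $R$ — say $x^2+y^2=xy$, $x^3=0$ and $y^3=0$ (whence also $x^2y=xy^2$) — one constructs a contracting homotopy in $A$. Choosing $f_1$ on a monomial $k$-basis of $R$ by assigning to each basis monomial the corresponding product of $\bar x,\bar y,\bar s$ (with a fixed choice of representatives for the two-dimensional degree-$2$ part), and assembling $f_2$ out of the homotopies just listed, one evaluates~\eqref{mkonstruktion} on all triples of basis elements. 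As in the cyclic case I expect the resulting Hochschild $3$-cocycle $m$ to be concentrated on triples of odd-degree inputs, so that, by $\bar s$-periodicity, only finitely many essentially distinct values have to be recorded.

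For (b) the key point is purely algebraic and uses the relations of $R$: since the characteristic is $2$,
\[ M^2=(x^2+xy+y^2)\cdot\Id_{R^2}=0, \]
the off-diagonal entries of $M^2$ vanishing automatically and $x^2+xy+y^2=(x^2+y^2)+xy=xy+xy=0$. Thus $M$ is a square-zero endomorphism of $R^2$ of internal degree $1$, and — once one verifies that the associated periodic complex is exact, i.e.\ $\ker M=\im M$ in every degree — $X$ has the $2$-periodic $R$-free resolution
\[ \cdots \xrightarrow{\ M\ } R[-1]^2 \xrightarrow{\ M\ } R^2 \longrightarrow X \longrightarrow 0. \]
With this resolution I would compute $\Id_X\cup\gamma_Q$ by transporting the Hochschild cocycle $m$ onto it through the standard comparison with the (tensored) bar resolution — equivalently, by building from $m$ a degree-$(-1)$ chain endomorphism of the periodic resolution that raises homological degree by $3$ — and then reading off the class in $\Ext^{3,-1}_R(X,X)$. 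Periodicity makes this $\Ext$-group small and explicitly computable from $X$ in a couple of low degrees, and the remaining task is to check that the image of $m$ there is not a coboundary.

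The principal obstacle is exactly this last non-vanishing. Writing down $\hat P_*$, the cocycles $\bar x,\bar y,\bar s$, the relation-homotopies for $Q_8$, and finally the formula for $m$, is lengthy but routine. The two delicate points are: (i)~confirming that the square-zero matrix $M$ genuinely yields a resolution of $X$ (the exactness $\ker M=\im M$); and (ii)~pushing the chain-level cocycle $m$ faithfully into $\Ext^{3,-1}_R(X,X)$ and proving the resulting element is nonzero — this is where the precise form of $M$ must be exploited, and where the clash between the $2$-periodicity of $X$ and the fact that the smallest-degree unit of $R$ sits in degree $4$ does the work. It may be worth remarking at the end that this clash is the conceptual reason $X$ cannot be a summand of any $\hat H^*(Q,N)$.
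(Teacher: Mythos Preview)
Your reduction to showing $\Id_X\cup\gamma_Q\neq 0$ is valid, but the paper proceeds quite differently, and one of your expectations is factually wrong.

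\textbf{Comparison with the paper.} The paper decouples the two assertions. For $\gamma_Q\neq 0$ it first computes the Hochschild cocycle $m$ explicitly and then argues directly that $m$ is not a coboundary: assuming $m=dg$, five instances of the coboundary equation (for the triples $(y,x,y)$, $(x,y,y)$, $(x,x,x)$, $(x,y,x)$, $(y,y,x)$) sum to $x^2+y^2=x\cdot(g(x,y)+g(y,x))$, forcing $g(x,y)+g(y,x)=y$; the symmetric argument gives $g(x,y)+g(y,x)=x$, a contradiction. This is a few lines once $m$ is known. For non-realisability of $X$ the paper does \emph{not} compute $\Id_X\cup\gamma_Q$; it uses matric Massey products. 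From the explicit null-homotopy of $M^2$ (built out of the base homotopies $\bar p,\bar q$) one reads off a representative of $\langle M,M,M\rangle$ and shows, via a trace argument with the annihilating matrix $D=\smatrix{x&y\\ x+y&x}$, that $0\notin\langle M,M,M\rangle$. A general lemma (your Proposition~\ref{cokernlemma}) then says this obstructs $X$ from being a summand of any $H^*$-module. This route bypasses both the exactness question for your periodic complex and any direct $\Ext$-computation.

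\textbf{A concrete error.} Your expectation that $m$ is ``concentrated on triples of odd-degree inputs'' is false for $Q_8$. The relation $x^2+y^2+xy=0$ (and likewise $x^3=0$) admits no $4$-periodic null-homotopy; the homotopies $\bar q,\bar r$ one is forced to use are only $8$-periodic and satisfy $\bar s\bar q=(\bar q+\bar x+\bar y)\bar s$, $\bar s\bar r=(\bar r+\bar x^2)\bar s$. This defect produces many nonzero values $m(s,b,c)$ and $m(sa,b,c)$ with $s$ of even degree~$4$; for instance $m(s,x,y)=(x+y)s$ and $m(s,x,x^2)=x^2s$. The analogy with the cyclic case breaks down precisely here.

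\textbf{On your plan for~(b).} Your periodic complex would need $\ker M=\im M$, which you flag but do not prove; even granting it, transporting $m$ into $\Ext^{3,-1}_R(X,X)$ and checking non-vanishing is a genuine computation you have not carried out. The paper's Massey-product route avoids both issues and is shorter; if you want to stay with your strategy, note that the link between $m$ and Massey products (your reference~\cite{bks}, Lemma~5.14) essentially shows that the two computations are equivalent, so you may as well compute $\langle M,M,M\rangle$ directly.
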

The only new thing is the non-triviality of $\gamma_Q$ and the non-realisability of the given module.
We will also explicitly compute a Hochschild cocycle representing $\gamma_Q$. The method is mainly the same as in §~\ref{szyklischegruppen}, but everything will be slightly more complicated.\bigskip

Let $L=kQ$ be the group ring over the field $k$; all modules over $L$ will be right $L$-modules. We often consider free $L$-modules; maps between them will be given in matrix form, with multiplication from the left (in order to make them right $L$-homomorphisms). Chain maps will be denoted with a bar ($\bar{x}$), the corresponding class in $\Tate$ will then be called $x$.

\subsection{The cohomology ring and base homotopies}\label{basehomotopies}

\begin{lemma}
A free and minimal resolution of $k$ (as a trivial $L$-module) is given by
\[
\xymatrix@C=60pt{
k & P_0=L \ar[l]^{\epsilon} & P_1=L^2 \ar[l]^{\partial_1=\smatrix{ I+E & J+E }}  & P_2=L^2\ar[l]^{\partial_2=\smatrix{ J+E&K'+E\\ K+E&I+E}} \\
& & P_3=L \ar[l]^{\partial_3=\smatrix{ I+E \\ J+E }} & P_4=L \ar[l]^{\partial_4=\norm\cdot} & \cdots\ar[l]^{\partial_5}
} 
\]
where $\partial_{n+4}=\partial_n$ and $P_{n+4}=P_n$ for all $n$. Here $\epsilon$ is the augmentation and $\norm$ is the norm element $\norm=\sum_{g\in Q} g\in L$.
\end{lemma}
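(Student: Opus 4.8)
The statement to verify is that the displayed $4$-periodic sequence of free $L$-modules, with the indicated differentials, is a minimal free resolution of the trivial module $k$. The proof is a direct verification broken into three parts: (1) the composites of consecutive differentials vanish, so we have a complex; (2) the complex is exact (acyclic) in each positive degree and has $H_0 = k$; (3) the resolution is minimal, i.e.\ all differentials have entries in the augmentation ideal. Part (3) is immediate: every entry of every matrix $\partial_i$ is a sum $g+E$ of two group elements or the norm element $\norm = \sum_{g\in Q} g$, and in characteristic $2$ both $g+E = g-E$ and $\norm$ lie in $\ker(\epsilon)$, the radical of the local ring $L = kQ$; hence no entry is a unit and the resolution is minimal. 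Once minimality and the complex property are in hand, exactness can be checked by a rank/dimension count, since over the local self-injective algebra $L$ (with $\dim_k L = 8$) a minimal complex of free modules that is exact in one spot is exact wherever the alternating sum of ranks is forced to match $\dim_k k = 1$.

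\textbf{Step 1: it is a complex.} I would compute $\partial_1\partial_2$, $\partial_2\partial_3$, $\partial_3\partial_4$, $\partial_4\partial_5$ directly in $L = kQ$, using the relations $I^2 = J^2 = K^2 = E'$ (so $g^2 = E'$ for $g\in\{I,J,K\}$, hence $(g+E)^2 = g^2 + E = E'+E = \norm'\!$-type element$\dots$ more precisely $(I+E)^2 = I^2 + 2I + E = E' + E$ in characteristic $2$) and $IJ = K$, $JI = K'$, etc. For instance
\[
\partial_1\partial_2 = \begin{pmatrix} I+E & J+E\end{pmatrix}\begin{pmatrix} J+E & K'+E \\ K+E & I+E\end{pmatrix} = \begin{pmatrix}(I{+}E)(J{+}E)+(J{+}E)(K{+}E) & \ \ast\end{pmatrix},
\]
and one checks each entry collapses to $0$ using $(I{+}E)(J{+}E) = IJ + I + J + E = K + I + J + E$ together with the analogous expansions, so that the two summands cancel in pairs. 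The composite $\partial_3\partial_4$ equals $\smatrix{I+E\\ J+E}\cdot\norm$, which is $0$ because $g\cdot\norm = \norm$ for every $g\in Q$, hence $(g+E)\norm = \norm + \norm = 0$ in characteristic $2$; similarly $\partial_4\partial_5 = \norm\cdot\smatrix{I+E & J+E} = 0$ for the same reason. The remaining composite $\partial_2\partial_3$ is handled exactly like $\partial_1\partial_2$. This is routine but must be done carefully; it is the bulk of the computation.

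\textbf{Step 2: exactness.} Having a minimal complex of free $L$-modules, I would invoke the standard fact that $kQ$ is self-injective (stated already in \S\ref{tatekohomologie}) and that the sequence is periodic with period $4$; so it suffices to prove exactness over one period, say at $P_0, P_1, P_2, P_3$. One clean way: show $\coker(\partial_1) = k$ (clear, since $\partial_1$ has image the augmentation ideal: $(I+E)L + (J+E)L$ is exactly $\ker\epsilon$ because $I-E$ and $J-E$ generate the radical when $Q$ is generated by $I,J$), then compute $\dim_k$ of each free module in the period —$8, 16, 16, 8, 8, \dots$— and use that the Euler characteristic of a bounded-above exact-below-degree-$0$ minimal complex resolving $k$ must reproduce $\dim_k k = 1$; combined with $\partial_i\partial_{i+1} = 0$ and a dimension count of kernels and images at each spot (e.g.\ $\dim_k \ker\partial_1 = 16 - 7 = 9$, and one verifies $\dim_k\operatorname{im}\partial_2 = 9$ as well), this pins down exactness. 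Alternatively, and perhaps more transparently, I would identify the complex with the $4$-periodic tail coming from the known minimal resolution of $k$ over $kQ_8$ found in \cite{CartEile},~XII \S11 (or \cite{AdemMilgram},~IV), which gives the same matrices up to change of free basis; then exactness is inherited.

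\textbf{Main obstacle.} There is no deep difficulty; the only real work—and the place where sign/element bookkeeping can go wrong—is Step 1, the verification $\partial_i\partial_{i+1} = 0$ for $i = 1, 2$ (the cases $i = 3, 4$ being trivial via $g\cdot\norm = \norm$). I would organize that computation by first recording a multiplication table for the eight products $(g+E)(h+E)$ with $g,h\in\{I,J,K\}$, then reading off the matrix products. Once the complex property and minimality are established, exactness follows formally from self-injectivity of $kQ$ together with a single dimension check at one spot, so I do not expect that part to present an obstacle.
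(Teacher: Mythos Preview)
The paper omits the proof entirely (``We omit the straightforward proof''), so there is no paper argument to compare against. Your plan is sound: Steps~1 and~3 are exactly right, and organizing the verification of $\partial_i\partial_{i+1}=0$ via a small table of products $(g+E)(h+E)$ is the efficient way to do it.

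One caution on Step~2: your closing claim that ``exactness follows formally from self-injectivity of $kQ$ together with a single dimension check at one spot'' is not correct. Self-injectivity is what allows one to splice a projective resolution with an injective one to obtain a \emph{complete} resolution, but it does not turn a minimal periodic complex of free modules into an exact one from a single local check, and no Euler-characteristic argument is available for an unbounded periodic complex. You genuinely need $\ker\partial_i=\operatorname{im}\partial_{i+1}$ at each of $P_0,P_1,P_2,P_3$ separately. Two of these are easy (at $P_0$ because $I+E$ and $J+E$ generate the augmentation ideal, $Q$ being generated by $I,J$; at $P_3$ because $\ker\partial_3$ is the space of $Q$-invariants $k\cdot\norm=\operatorname{im}(\norm\cdot)$), while at $P_1$ and $P_2$ an honest rank computation is required---or, as you yourself suggest, one simply identifies the complex with the one in \cite{CartEile},~XII~\S11. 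That citation route is the cleanest and is in the spirit of the paper, which already invokes that reference for the ring structure of $\hat H^*(Q)$.
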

We omit the straightforward proof.\rem{REFERENZ?!}

By extending this complex $4$-periodically we get the exact sequence $\hat{P}$:
\[ \dots \xleftarrow{\partial_{-2}} \hat{P}_{-2} \xleftarrow{\partial_{-1}} \hat{P}_{-1} \xleftarrow{\partial_0} 
\hat{P}_0 \xleftarrow{\partial_1} \hat{P}_1 \xleftarrow{\partial_2} \dots \]
Since we have taken a minimal resolution, the differential of the complex $\Hom_L(\hat{P}_*,k)$ vanishes; therefore 
$\Tate_L^{4n}(k,k)\cong\Tate_L^{4n+3}(k,k)\cong k$ and
$\Tate_L^{4n+1}(k,k)\cong \Tate_L^{4n+2}(k,k)\cong k^2$ for all integers $n$. 

Let $\bar{s}:\hat{P}[4]\rightarrow \hat{P}$ be the shift map, given by the identity in every degree. This is an invertible cocycle; thus, multiplication by a suitable power of $s$ yields an isomorphism $\Tate_L^{4u+t}(k,k)\cong\Tate_L^{t}(k,k)$ for $t=0,1,2,3$ and $u\in\mathbb{Z}$.

Now we are heading for explicit generators $x,y$ of $\Tate_L^1(k,k)\cong H^1\Hom_L^*(\hat{P},\hat{P})$, which are represented by chain maps $\bar{x},\bar{y}:\hat{P}[1]\rightarrow \hat{P}$. By construction we have $\hat{P}_1=L^2$ and $\hat{P}_0=L$. We extend the two projections $\hat{P}_1\rightarrow \hat{P}_0$ to chain transformations $\hat{P}[1]\rightarrow \hat{P}$ as follows: For $\bar{x}$ we take
\begin{align*}
\xymatrix@C=50pt{
  \hat{P}_1\ar[d]^{\smatrix{E&0}}
   & \hat{P}_2\ar[d]^{\smatrix{ 0&E\\ E&I }}\ar[l]_{\smatrix{J+E&K'+E\\ K+E&I+E}}
   & \hat{P}_3\ar[d]^{\smatrix{ E\\ 0}}\ar[l]_{\smatrix{I+E \\ J+E}}
   & \hat{P}_4\ar[d]^{\begin{smallmatrix} E+E'+J+J' \end{smallmatrix}}\ar[l]_{\norm}
   & \hat{P}_5\ar[d]^{\smatrix{E&0}}\ar[l]_{\smatrix{I+E & J+E}} \\
  \hat{P}_0
   & \hat{P}_1\ar[l]^{\smatrix{I+E & J+E}}
   & \hat{P}_2\ar[l]^{\smatrix{J+E & K'+E \\ K+E & I+E}}
   & \hat{P}_3\ar[l]^{\smatrix{I+E \\ J+E}}
   & \hat{P}_4\ar[l]^{\norm}
}
\end{align*}
extended $4$-periodically in both directions. Similarly, we take as $\bar{y}$:
\[
\xymatrix@C=50pt{
  \hat{P}_1\ar[d]^{\smatrix{0&E}}
   & \hat{P}_2\ar[d]^{\smatrix{ J&E\\ E&0 }}\ar[l]_{\smatrix{J+E&K'+E\\ K+E&I+E}}
   & \hat{P}_3\ar[d]^{\smatrix{ 0\\ E}}\ar[l]_{\smatrix{I+E \\ J+E}}
   & \hat{P}_4\ar[d]^{E+E'+I+I'}\ar[l]_{\norm}
   & \hat{P}_5\ar[d]^{\smatrix{0&E}}\ar[l]_{\smatrix{I+E & J+E}} \\
  \hat{P}_0
   & \hat{P}_1\ar[l]^{\smatrix{I+E & J+E}}
   & \hat{P}_2\ar[l]^{\smatrix{J+E & K'+E \\ K+E & I+E}}
   & \hat{P}_3\ar[l]^{\smatrix{I+E \\ J+E}}
   & \hat{P}_4\ar[l]^{\norm}
}
\]
Since these cocycles are $4$-periodic, they commute with $\bar{s}$. Let us compute their pairwise products: $\bar{x}^2$ is given by
\[ \twoprod{\smatrix{0 & E}}{\smatrix{0 \\ E}}{\smatrix{E+E'+J+J' \\ 0}}{\smatrix{E+E'+J+J' & 0}} \]
$\bar{y}^2$ has the following form:
\[ \twoprod{\smatrix{E & 0}}{\smatrix{E \\ 0}}{\smatrix{0 \\ E+E'+I+I'}}{\smatrix{0 & E+E'+I+I'}} \]
Now $\bar{x}\bar{y}$:
\[ \twoprod{\smatrix{J & E}}{\smatrix{E \\ I}}{\smatrix{E+E'+I+I' \\ 0}}{\smatrix{0 & E+E'+J+J'}} \]
Similarly $\bar{y}\bar{x}$:
\[ \twoprod{\smatrix{E & I}}{\smatrix{J \\ E}}{\smatrix{0 \\ E+E'+J+J'}}{\smatrix{E+E'+I+I' & 0}} \]

In each of these cocycles, the first map $\hat{P}_2\rightarrow \hat{P}_0$ determines the cohomology class by the isomorphism \eqref{tateiso}; in $k^2$, they correspond to $(0\,1), (1\,0), (1\,1)$ and $(1\,1)$, respectively. Hence $\smash{\Tate}^2_L(k,k)$ is generated by $x^2$ and $y^2$, and we have $x^2+y^2=xy=yx$.

But we will need explicit chain homotopies for these equations later on. For $\bar{x}\bar{y}+\bar{y}\bar{x}$ we get
\[ \twoprod{\transpose{\smatrix{E+J \\ E+I}}}{\smatrix{E+J\\ E+I}}{\smatrix{E+E'+I+I' \\ E+E'+J+J'}}{\transpose{\smatrix{E+E'+I+I' \\ E+E'+J+J'}}} \]
As a chain homotopy $\bar{p}$ we can choose:
\[
\xymatrix@C=57pt{%
 \hat{P}_2 \ar[dr]^(0.6){\smatrix{0&E\\ E&0}}
 & \hat{P}_3 \ar[l]_{\smatrix{I+E \\ J+E}}\ar[dr]^(0.55){0}
 & \hat{P}_4 \ar[l]_{\norm}\ar[dr]^(0.55){E+E'}
 & \hat{P}_5 \ar[l]_{\smatrix{I+E & J+E}}\ar[dr]^(0.55){0}
 & \hat{P}_6 \ar[l]_{\smatrix{J+E&K'+E\\ K+E&I+E}} \\
 \hat{P}_0
 & \hat{P}_1 \ar[l]^{\smatrix{I+E & J+E}}
 & \hat{P}_2 \ar[l]^{\smatrix{J+E & K'+E \\ K+E & I+E}}
 & \hat{P}_3 \ar[l]^{\smatrix{I+E \\ J+E}}
 & \hat{P}_4 \ar[l]^{\norm}
}
\]
extended $4$-periodically.
The map $\bar{x}^2+\bar{y}^2+\bar{x}\bar{y}$ is given by:
\[
\twoprod{\transpose{\smatrix{E+J \\ 0}}}{\smatrix{0\\ E+I}}{\smatrix{I+I'+J+J' \\ E+E'+I+I'}}{\transpose{\smatrix{E+E'+J+J' \\ I+I'+J+J'}}}
\]
In this case, there is no $4$-periodic chain homotopy (as we will see in Remark~\ref{keine4periode}). But there is an $8$-periodic one: The maps $(\bar{q}_0,\bar{q}_1,\dots,\bar{q}_7)$ are given (in matrix form) by
\[ \Bigl(
 \smatrix{0 & 0}, 
 \smatrix{0 &  0 \\ E & 0},
 \smatrix{0 \\ 0},
 \smatrix{E+I'+J+K},
 \smatrix{E & E},
 \smatrix{J &  0 \\ E & I},
 \smatrix{E \\ E},
 \smatrix{E+I+J'+K}
\Bigr) \]
We extend this $8$-periodically and obtain a homotopy $\bar{q}$, which in fact satisfies the equality
\[ \bar{s}\bar{q}=(\bar{q}+\bar{x}+\bar{y})\bar{s}. \]

Finally, let us consider $\Tate^3_L(k,k)$. It turns out that $\bar{x}^3$ is null-homotopic, for example via the $8$-periodic homotopy $\bar{r}$ obtained by $8$-periodic extension of $(\bar{r}_0,\bar{r}_1,\dots,\bar{r}_7)$, which are given by
\[
\Bigl(
 \smatrix{0 & 0},
 \smatrix{0 \\ 0},
 \smatrix{E+J' \\ I+J},
 \smatrix{I+J & J+K},
 \smatrix{0 & E},
 \smatrix{0 \\ E},
 \smatrix{E'+J \\ I+J},
 \transpose{\smatrix{E+E'+J'+I\\ J+K}}
\Bigr) \]
The map $\bar{r}$ additionally satisfies
\[ \bar{s}\bar{r}=(\bar{r}+\bar{x}^2)\bar{s}. \]
Since $x^3+y^3=(x+y)(x^2+xy+y^2)$, $\bar{y}^3$ is null-homotopic as well. Furthermore, $xy^2=(xy)y=(x^2+y^2)y=x^2y+y^3=x^2y$. By inspection, one sees that in degree $0$, the map $\bar{x}^2\bar{y}:\hat{P}[3]\rightarrow \hat{P}$ is given by
$\hat{P}_3=L\xrightarrow{\Id} L=\hat{P}_0$; therefore $x^2y$ generates $\Tate^3_L(k,k)$. Taking all these results together, we have verified that the ring structure of $\hat{H}^*(Q)$ is the one given in Theorem~\ref{haupttheorem}. Let us remark here that all monomials in $x$ and $y$ of degree bigger than $3$ vanish in this ring.

\subsection{The class of a map}\label{sklasse}
Let us recall the construction of a representative of $\gamma_{Q_8}$. First of all, we have to choose a cycle selection-homomorphism
$ f_1:\Tate_L^*(k,k)\rightarrow \Hom_L^*(\hat{P},\hat{P}) $
such that any class $a$ is mapped to a representative $f_1(a)$. Then we can find a $k$-linear map
$ f_2:\Tate_L^*(k,k)\otimes\Tate_L^*(k,k)\rightarrow \Hom_L^*(\hat{P},\hat{P}) $
of degree $-1$ satisfying
$ df_2(a,b)=f_1(a)f_1(b)-f_1(ab) $
for all $a,b$. Finally, we are interested in terms of the form
\begin{equation}\label{kozykelterm} f_2(a,b)f_1(c)+f_2(a,bc)+f_2(ab,c)+f_1(a)f_2(b,c); \end{equation}
this is a cocycle in $\Hom_L^*(\hat{P},\hat{P})$. In order to determine the class of this cocycle, it is enough to know the degree $0$ map of it (cf.~\eqref{tateiso}). This observation leads to the following definition.
\begin{definition}
For every $f\in \Hom^n_L(\hat{P},\hat{P})$, i.e.,~a family of maps $f_j:\hat{P}_{j+n}\rightarrow \hat{P}_j$ ($j\in\mathbb{Z}$), not necessarily commuting with the differential, we denote by $\class(f)$ the class of the map 
$\epsilon \circ f_0: \hat{P}_n\rightarrow k$ in $H^n \Hom_L(\hat{P}_*,k)=\Tate_L^n(k,k)$. 
\end{definition}
Note that the complex $\Hom_L(\hat{P}_*,k)$ has trivial differential; thus, every element in $\Hom_L(\hat{P}_*,k)$ and in particular $\epsilon\circ f_0$ is a cocycle.
The definition above gives a map
\[
\boxed{
\begin{aligned}
 \class:\Hom^n_L(\hat{P},\hat{P}) &\longrightarrow  \Tate^n_L(k,k) \\
  f & \;\mapsto\;  [\epsilon\circ f_0]
\end{aligned}
}
\]
\begin{lemma} \label{classlemma}
The map $\class$ has the following properties:
\begin{enumerate}
\item[(i)] If $f\in\Hom^n_L(\hat{P},\hat{P})$ is a cocycle, then $\class(f)$ is the cohomology class of $f$; in particular $\class\circ f_1=\Id$.
\item[(ii)] The map $\class$ is $k$-linear.
\item[(iii)] If $\class(f_1)=\class(f_2)$ for some $f_1,f_2\in\Hom^n_L(\hat{P},\hat{P})$, then $\class(f_1g)=\class(f_2g)$ for all $g\in\Hom^m_L(\hat{P},\hat{P})$.
\item[(iv)] If $a\in\Hom^m_L(\hat{P},\hat{P})$ is a cocycle and $f\in\Hom^n_L(\hat{P},\hat{P})$ is arbitrary, then $\class(fa)=\class(f)\class(a)$.
\end{enumerate}
\end{lemma}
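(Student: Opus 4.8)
The plan is to prove the four statements in the order listed, since (iv) will be deduced from (i)--(iii) together with the fact that the isomorphism \eqref{tateiso} is multiplicative. Statement (i) is little more than a reformulation of \eqref{tateiso}: a cocycle $f\in\Hom^n_L(\hat{P},\hat{P})$ is precisely a chain transformation $\hat{P}[n]\to\hat{P}$, and $\class(f)=[\epsilon\circ f_0]$ is, by construction, its image under the isomorphism $H^n\Hom^*_L(\hat{P},\hat{P})\xrightarrow{\cong}\Tate^n_L(k,k)$; hence $\class(f)$ is the cohomology class of $f$, transported to $\Tate^n_L(k,k)$. Since the cycle selection homomorphism $f_1$ was chosen so that $f_1(a)$ is a cocycle representing $a$, this gives $\class(f_1(a))=a$, i.e., $\class\circ f_1=\Id$. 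Statement (ii) is immediate, because $\class$ is the composite of the $k$-linear operations $f\mapsto f_0$, then $h\mapsto\epsilon\circ h$, then passage to the cohomology class in $\Hom_L(\hat{P}_*,k)$.

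The key point for (iii) is that the resolution was chosen minimal, so that $\Hom_L(\hat{P}_*,k)$ carries the zero differential; hence $\Tate^n_L(k,k)=\Hom_L(\hat{P}_n,k)$, with no further identifications. Consequently, if $\phi,\psi\in\Hom^n_L(\hat{P},\hat{P})$ satisfy $\class(\phi)=\class(\psi)$, this means literally that $\epsilon\circ\phi_0=\epsilon\circ\psi_0$ as $L$-linear maps $\hat{P}_n\to k$. If $g$ has degree $m$, then the component $(\phi g)_0:\hat{P}_{n+m}\to\hat{P}_0$ equals $\phi_0\circ g_n$, so $\epsilon\circ(\phi g)_0=\epsilon\circ\phi_0\circ g_n=\epsilon\circ\psi_0\circ g_n=\epsilon\circ(\psi g)_0$, whence $\class(\phi g)=\class(\psi g)$.

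For (iv) the idea is to replace the arbitrary map $f$ by a genuine cocycle with the same $\class$ and then invoke multiplicativity. Put $f'=f_1(\class(f))$ with $f_1$ the cycle selection homomorphism; by (i) this is a cocycle of the dg-algebra $A=\Hom^*_L(\hat{P},\hat{P})$ with $\class(f')=\class(f)$, so (iii), applied to $f$, $f'$ and the multiplier $a$, gives $\class(fa)=\class(f'a)$. Now $f'a$ is a product of two cocycles of $A$, hence itself a cocycle, and combining (i) with the fact that \eqref{tateiso} is multiplicative yields $\class(f'a)=\class(f')\,\class(a)=\class(f)\,\class(a)$; therefore $\class(fa)=\class(f)\,\class(a)$.

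I do not expect a genuine obstacle: the only step with real content is (iv), and there the single trick --- using (iii) to trade $f$ for $f_1(\class(f))$ --- reduces everything to the already-established compatibility of \eqref{tateiso} with cup products. The points to be careful about are the index bookkeeping in the product of two families of maps (e.g. that $(f'a)_0=(f')_0\circ a_n$), and the fact, recorded above, that minimality of the resolution is what forces $\Hom_L(\hat{P}_*,k)$ to have trivial differential --- which is precisely what makes (iii), and through it (iv), work.
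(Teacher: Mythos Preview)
Your proof is correct and follows essentially the same approach as the paper's own proof: (i) and (ii) are immediate, (iii) uses the triviality of the differential on $\Hom_L(\hat{P}_*,k)$ to conclude $\epsilon\circ\phi_0=\epsilon\circ\psi_0$ and then precomposes with $g_n$, and (iv) replaces $f$ by a cocycle with the same $\class$ (the paper just says ``choose a cocycle $h$'' where you take $f'=f_1(\class(f))$) and then invokes (iii) and multiplicativity of \eqref{tateiso}. Your explicit bookkeeping $(fg)_0=f_0\circ g_n$ and the emphasis on minimality are exactly what the paper uses implicitly.
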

\begin{proof}
(i) follows from \eqref{tateiso}, (ii) holds by definition. \\
Ad (iii): If $\class(f_i)=0$, then $\epsilon\circ f_i=0$. This implies $\epsilon\circ f_i\circ g=0$, hence $\class(f_ig)=0$. For general $f_1,f_2$ note $\class(f_1-f_2)=0$; by what we just proved $\class((f_1-f_2)g)=0$ and therefore $\class(f_1g)=\class(f_2g)$.\\
Ad (iv): Choose a cocycle $h\in\Hom^n_L(P,P)$ satisfying $\class(h)=\class(f)$. Then by (iii)
\[ \class(fa)=\class(ha)=\class(h)\class(a)=\class(f)\class(a). \qedhere \]
\end{proof}

The following corollary will simplify computations later on.
\begin{lemma} \label{vereinfachlemma}
The map $f_2$ can be chosen in such a way that $\class\circ f_2=0$.
\end{lemma}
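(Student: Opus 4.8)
The plan is to start from \emph{any} map $f_2$ satisfying the defining identity
\[ df_2(a,b)=f_1(a)f_1(b)-f_1(ab), \]
which exists by the general construction recalled in \S\ref{kanonischel}, and then correct it by a term that is a cocycle in each variable. Adding a cocycle-valued correction leaves the defining identity untouched (since such a term is killed by $d$), so we are free to use the correction to arrange $\class\circ f_2=0$.

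Concretely, I would set, for homogeneous classes $a,b\in\Tate_L^*(k,k)$,
\[ \tilde{f}_2(a,b)=f_2(a,b)-f_1\bigl(\class(f_2(a,b))\bigr), \]
defining $\tilde f_2$ on pairs of basis elements and extending $k$-bilinearly. Since $f_1$, $f_2$ and $\class$ are all $k$-linear and $f_1(\class(f_2(a,b)))\in\Hom_L^{|a|+|b|-1}(\hat P,\hat P)$, the map $\tilde f_2$ is again $k$-linear of degree $-1$.

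The two checks to carry out are then: first, that $\tilde f_2$ still satisfies $d\tilde f_2(a,b)=f_1(a)f_1(b)-f_1(ab)$. This holds because $f_1$ takes values in cocycles — the term $f_1(\class(f_2(a,b)))$ is, by construction of $f_1$, a chosen representative of the class $\class(f_2(a,b))\in\Tate_L^*(k,k)$, hence $d f_1(\class(f_2(a,b)))=0$ — so $d\tilde f_2(a,b)=df_2(a,b)$. Second, applying $\class$ and using $k$-linearity together with Lemma~\ref{classlemma}(i) (which gives $\class\circ f_1=\Id$), one obtains $\class(\tilde f_2(a,b))=\class(f_2(a,b))-\class(f_2(a,b))=0$ for all $a,b$, so $\tilde f_2$ is the desired map.

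There is no real obstacle here: the only point one must be careful about is that the correction term is genuinely a cocycle, so that the crucial identity relating $df_2$ to $f_1$ is not disturbed; this is immediate from the fact that a cycle-selection homomorphism takes values in cocycles.
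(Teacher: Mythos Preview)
Your proof is correct and is essentially the same as the paper's own argument: starting from an arbitrary $f_2$, you subtract the cocycle-valued correction $f_1\circ\class\circ f_2$ and verify the two required properties using $df_1=0$ and $\class\circ f_1=\Id$. The only difference is notational (the paper swaps the roles of $f_2$ and $\tilde f_2$).
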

\begin{proof}
Choose any $\tilde{f_2}$ (satisfying $d\tilde{f}_2(a,b)=f_1(a)f_1(b)-f_1(ab)$). Put $f_2=\tilde{f}_2-f_1\circ \class \circ \tilde{f}_2$. Since $df_1=0$, we get
\[ df_2(a,b)=d\tilde{f}_2(a,b)=f_1(a)f_1(b)-f_1(ab), \]
and from $\class\circ f_1=\Id$ follows that
\begin{equation*} \class\circ f_2=\class\circ \tilde{f}_2-\class\circ f_1\circ \class\circ \tilde{f}_2=0. \qedhere \end{equation*}
\end{proof}

Consider (\ref{kozykelterm}) with this simplified version of $f_2$. By applying $\class$, we get the term
\[ \class(f_2(a,b)f_1(c))+\class(f_2(a,bc))+\class(f_2(ab,c))+\class(f_1(a)f_2(b,c)) \]
This is the cohomology class of \eqref{kozykelterm}. Note that the individual terms $f_2(a,b)f_1(c)$, $f_2(a,bc)\dots$ will not be cocycles in general, but the map $\class$ assigns cohomology classes to them in such a way that the sum will be the class we are looking for.

By our choice of $f_2$ (such that $\class\circ f_2=0$), the first three terms in the sum vanish (note that $\class(f_2(a,b)f_1(c))=\class(f_2(a,b))c$ by Proposition~\ref{classlemma}.(iv)). Thus we are interested in terms of the form
$\class(f_1(a)f_2(b,c))$, where $a,b,c$ run through all elements of a $k$-basis of $\smash{\Tate}_L^*(k,k)$.\bigskip

\begin{lemma} \label{pqreigenschaften}
For all monomials $\alpha,\beta$ in $\bar{x},\bar{y}$ we have:
\begin{align*}
 \class(\bar{p}\alpha)&=0,  
 &\class(\bar{x}\bar{p}\alpha)&=x^2\class(\alpha), \\
 \class(\bar{q}\alpha)&=0,
 &\class(\bar{y}\bar{p}\alpha)&=y^2\class(\alpha), \\
 \class(\bar{x}\bar{q}\alpha)&=0,
 &\class(\bar{y}\bar{q}\alpha)&=y^2\class(\alpha), \\
 \class(\beta\bar{p}\alpha)&=0 &&\text{if the degree of $\beta$ is at least $2$} \\
 \class(\beta\bar{q}\alpha)&=0 &&\text{if the degree of $\beta$ is at least $2$} \\
 \class(\beta\bar{r}\alpha)&=0 &&\text{for all $\beta$}
\end{align*}
Here, $\bar{p},\bar{q}$ and $\bar{r}$ are defined as in §~\ref{basehomotopies}.
\end{lemma}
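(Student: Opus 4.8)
The plan is to prove each identity by tracking only the degree~$0$ component of the relevant composite, since $\class(f)$ depends only on $\epsilon\circ f_0$. Concretely, for a monomial $\alpha$ in $\bar x,\bar y$ of degree $d$, the chain map $\bar\alpha\colon\hat P[d]\to\hat P$ has a known degree-$0$ map $\hat P_d\to\hat P_0$, which by the isomorphism \eqref{tateiso} represents $\class(\alpha)=\alpha$; and for a homotopy $\bar h\in\{\bar p,\bar q,\bar r\}$ we read off from the explicit $4$- or $8$-periodic data in \S\ref{basehomotopies} the component $\hat P_{d+e}\to\hat P_d$ (where $e=|\bar h|$, so $e=-2$ for $\bar p$, $e=-2$ for $\bar q$ after the degree shift, and $e=-3$ for $\bar r$; in any case one bookkeeps degrees so that the composite lands in $\hat P_0$). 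Then $\class(\beta\bar h\alpha)$ is computed by composing the degree-$0$ component of $\bar\beta$, the appropriate component of $\bar h$, and the degree-$0$ component of $\bar\alpha$, and finally applying $\epsilon$. The upshot is that each assertion reduces to: \emph{either} the resulting map $\hat P_n\to k$ is zero, \emph{or} it is (a scalar times) the standard generator identified in \S\ref{basehomotopies} with the claimed cohomology class.

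I would organize the verification as follows. First, reduce to the case where $\alpha$ is a single monomial and use multiplicativity of $\class$ against cocycles (Proposition~\ref{classlemma}(iv)) to strip off as much of $\alpha$ as possible: since $\bar\alpha$ is a genuine cocycle, $\class(\beta\bar h\alpha)=\class(\beta\bar h)\cdot\class(\alpha)$, so it suffices to treat $\alpha=1$ and then reinstate the factor $\class(\alpha)$; this already explains the shape of the right-hand sides (the $\class(\alpha)$ factor). Second, for the left column identities $\class(\bar p)=\class(\bar q)=\class(\bar x\bar q)=0$ and $\class(\beta\bar p)=\class(\beta\bar q)=0$ for $|\beta|\ge 2$: inspect the relevant component of $\bar p$ resp.\ $\bar q$ landing in $\hat P_0$. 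From the displayed data, $\bar p$ in the degree reaching $\hat P_0$ is $\smatrix{0&E\\ E&0}\colon\hat P_2\to\hat P_0$ — wait, that maps to $\hat P_0$ from $\hat P_2$, but composing with $\epsilon$ and with the degree-$0$ part of $\bar\beta$ when $|\beta|\ge 2$ forces the source to be $\hat P_{|\beta|}$ with $|\beta|\ge 2$; chasing through, every such composite either hits a zero block of $\bar p$ (the blocks $0$ over $\hat P_3,\hat P_5$) or factors through $\epsilon$ applied to an element of the augmentation ideal, giving $0$. The same inspection handles $\bar q$ (whose $\hat P_0$-reaching blocks are $\smatrix{0&0}$ and $\smatrix{E&E}$, etc.), and in particular $\bar x\bar q$ vs.\ $\bar y\bar q$: composing with $\smatrix{E&0}$ (the degree-$0$ map of $\bar x$) kills the relevant entry while composing with $\smatrix{0&E}$ (that of $\bar y$) does not, which is exactly why $\class(\bar x\bar q\alpha)=0$ but $\class(\bar y\bar q\alpha)=y^2\class(\alpha)$. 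Third, for the nonzero right-column identities $\class(\bar x\bar p\alpha)=x^2\class(\alpha)$, $\class(\bar y\bar p\alpha)=y^2\class(\alpha)$, $\class(\bar y\bar q\alpha)=y^2\class(\alpha)$: after the reduction to $\alpha=1$, one identifies the degree-$0$ map of $\bar x\bar p$ (resp.\ $\bar y\bar p$, $\bar y\bar q$) as the map $\hat P_2\to\hat P_0$, reads off its image in $k^2\cong\Tate^2_L(k,k)$ via \eqref{tateiso}, and matches it against the generators $x^2\leftrightarrow(1\,0)$, $y^2\leftrightarrow(0\,1)$ fixed in \S\ref{basehomotopies}. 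Finally, $\class(\beta\bar r\alpha)=0$ for all $\beta$: since $\bar x^3$ is null-homotopic via $\bar r$, the composite reaching $\hat P_0$ either passes through the initial zero blocks $\smatrix{0&0}$, $\smatrix{0\\0}$ of $\bar r$ or, in the one or two degrees where $\bar r$ is nonzero there, lands after $\epsilon$ in the augmentation ideal — I would simply check the finitely many residue classes mod $8$ against the listed $(\bar r_0,\dots,\bar r_7)$.

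The main obstacle is purely bookkeeping: one must get the degree-shift conventions exactly right, so that for each pair $(\beta,\bar h)$ and each residue of the target degree modulo the period ($4$ for $\bar p$, $8$ for $\bar q$ and $\bar r$), the correct matrix block is selected and the composite truly lands in $\hat P_0$ before $\epsilon$ is applied. There is no conceptual difficulty — every identity is a finite matrix computation over $kQ$ in characteristic $2$ — but because $\bar q$ and $\bar r$ are only $8$-periodic while $\bar x,\bar y$ are $4$-periodic, one cannot collapse to a single degree and must genuinely enumerate cases. I expect the cleanest writeup to tabulate, for each of $\bar p,\bar q,\bar r$, the single block that reaches $\hat P_0$ after precomposition by $\bar x$ or $\bar y$ (or by a degree-$\ge 2$ monomial), observe that in all but three combinations $\epsilon$ annihilates the output, and in the three surviving combinations read off the generator directly; the factor $\class(\alpha)$ then comes for free from Proposition~\ref{classlemma}(iv).
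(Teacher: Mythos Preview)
Your plan is essentially the paper's approach: reduce to $\alpha=1$ via Proposition~\ref{classlemma}(iv), then read off $\epsilon\circ\beta_0\circ\bar h_{|\beta|}$ from the explicit blocks of $\bar p,\bar q,\bar r$ and observe that in all but the three stated cases the image lands in $\ker\epsilon$. Two corrections are worth making, though.

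First, you should also invoke Proposition~\ref{classlemma}(iii) to reduce the degree of $\beta$ to at most~$3$ (this is what the paper does). Periodicity of $\bar h$ alone is not enough: for instance $\bar p_5=\smatrix{0&E\\E&0}$ and $\bar r_4=\smatrix{0&E}$ are \emph{not} $\ker\epsilon$-valued, so for $|\beta|\equiv 1\pmod 4$ with $|\beta|\ge 5$ (resp.\ $|\beta|=4,5$ for $\bar r$) your ``augmentation ideal'' argument breaks down and you would have to analyse $\beta_0$ case by case. The clean way out is that any monomial $\beta$ in $\bar x,\bar y$ of degree $\ge 4$ has $\class(\beta)=0$, so by~(iii) you may replace $\beta$ by the zero map; this collapses everything to the finite list $|\beta|\in\{0,1,2,3\}$, exactly as in the paper.

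Second, a few bookkeeping slips: the homotopies $\bar p,\bar q$ have degree~$1$ (not $-2$) and $\bar r$ has degree~$2$ (not $-3$); the block $\smatrix{0&E\\E&0}$ is $\bar p_1\colon\hat P_2\to\hat P_1$, not a map into $\hat P_0$; and in \S\ref{basehomotopies} the identification is $x^2\leftrightarrow(0\;1)$, $y^2\leftrightarrow(1\;0)$, the opposite of what you wrote. These do not affect the structure of the argument but will matter when you actually write out the three nonzero cases.
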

\begin{proof}
By Proposition~\ref{classlemma}.(iii) we can assume that the degree of $\beta$ is at most $3$. Furthermore, we can assume $\alpha=1$ by Proposition~\ref{classlemma}.(iv). In order to determine $\class(\bar{a}\bar{p})$ for any given cocycle $\bar{a}$ of degree $n$, we consider the composition
\begin{equation*}
  \hat{P}_{n+1} \xrightarrow{\bar{p}_n} \hat{P}_n \xrightarrow{\bar{a}_0} \hat{P}_0 \xrightarrow{\hspace{1pt}\epsilon} k 
\end{equation*}
as an element of $H^{n+1}\Hom_L(\hat{P}_*,k)$. 
If $n=0,2,3$, then $\im(\bar{p}_n)\subset \ker(\epsilon)\cdot \hat{P}_n$. Therefore, $\im(\bar{a}_0\circ\bar{p}_n)\subset \ker(\epsilon)\cdot \hat{P}_0=\ker(\epsilon)$, hence  $\epsilon\circ \bar{a}_0\circ \bar{p}_n=0$. 
For $\class(\bar{x}\bar{p})$ consider $\bar{x}\bar{p}$ in degree $0$, i.e.~
\[
\begin{array}{ccccc}
\hat{P}_2 &\stackrel{\bar{p}_1}{\longrightarrow}&\hat{P}_1&\stackrel{\bar{x}_0}{\longrightarrow}& \hat{P}_0 \\
&\smatrix{0 & E \\ E & 0}&&\smatrix{E & 0}
\end{array}
\]
that is $\smatrix{0 & E}:\hat{P}_2\longrightarrow \hat{P}_0$, which corresponds to $x^2$. The remaining cases can be shown analogously.
\end{proof}

\begin{bemerkung}\label{keine4periode} Using $\class$, we can prove that there is no $4$-periodic null-homotopy for $\bar{x}^2+\bar{x}\bar{y}+\bar{y}^2$ as follows: Suppose there is a $4$-periodic null-homotopy; call it $\hat{q}$. Since $d(\hat{q}-\bar{q})=0$, $\bar{v}=\hat{q}-\bar{q}$ is a cocycle, representing some class $v$. By construction,  $\bar{s}\bar{q}=(\bar{q}+\bar{x}+\bar{y})\bar{s}$. Since $\hat{q}$ is $4$-periodic, we have
$\class(\bar{s}\bar{v})=\class(\bar{v}\bar{s})-\class((\bar{x}+\bar{y})\bar{s})=vs-(x+y)s$
by Proposition~\ref{classlemma}. On the other hand, $\class(\bar{s}\bar{v})=sv$, hence $(x+y)s=0$, a contradiction. In a similar way one shows that there is no $4$-periodic null-homotopy for\rem{?} $\bar{x}^3$.
\end{bemerkung}

\subsection{The maps $f_1$ and $f_2$}
A $k$-basis of $\Tate$ is given by $\mathfrak{C}=\{s^i, xs^i, ys^i, x^2s^i, y^2s^i, x^2ys^i\mid i\in\mathbb{Z}\}$. Define the $k$-linear map $f_1$ on the basis $\mathfrak{C}$ by
\begin{eqnarray*}
f_1\,:\, \Tate_L^*(k,k) &\rightarrow&\Hom_L^*(\hat{P},\hat{P}) \\
x^\varepsilon y^\delta s^i &\mapsto& \bar{x}^\varepsilon \bar{y}^\delta \bar{s}^i
\end{eqnarray*}
for all  $i,\varepsilon,\delta\in\mathbb{Z}$ for which the expression on the left hand side lies in $\mathfrak{C}$. Put $\mathcal{B}=\{1,x,y,x^2,y^2,x^2y\}$. For all $b,c\in \mathcal{B}$ and $i,j\in\mathbb{Z}$ we have $f_1(bs^ics^j)=f_1(bc)\bar{s}^{i+j}$ and $f_1(bs^i)f_1(cs^i)=f_1(b)f_1(c)\bar{s}^{i+j}$, since $\bar{s}$ commutes with both $\bar{x}$ and $\bar{y}$. This implies that we can define $f_2$ on $\mathcal{B}\times\mathcal{B}$ and then extend it to $\mathfrak{C}\times\mathfrak{C}$ via
$f_2(bs^i,cs^j)=\bar{s}^{i+j}f_2(b,c)$. Now define $f_2$ on $\mathcal{B}\times\mathcal{B}$ as follows:
\begin{flushleft}
\begin{tabular}{cc|cccc|} 
 \multicolumn{2}{c}{\multirow{2}*{$f_2(b,c)$}}    &     \multicolumn{4}{c}{$c$}  \\
\multicolumn{1}{c}{}     &      &$1$&$x$         &$y$         &$x^2$         \\  \cline{2-6}
\multirow{6}*{$b$}    &$1$   &$0$&$0$         &$0$         &$0$             \\
    &$x$   &$0$&$0$         &$\Bar{q}$&$\Bar{r}$       \\
    &$y$   &$0$&$\Bar{p}+\Bar{q}$&$0$         &$\Bar{p}\Bar{x}+\Bar{x}\Bar{p}+\Bar{x}^2$ \\
    &$x^2$ &$0$&$\Bar{r}$       &$0$         &$\Bar{r}\Bar{x}$     \\   
    &$y^2$ &$0$&$\Bar{r}+\Bar{q}\Bar{x}+\Bar{x}\Bar{p}+\Bar{x}^2$ &$\Bar{x}\Bar{q}+\Bar{q}\Bar{y}+\Bar{r}$       &$\Bar{q}\Bar{x}^2+\Bar{r}\Bar{x}+\Bar{p}\Bar{x}^2+\Bar{y}\Bar{r}$   \\
    &$x^2y$&$0$&$\Bar{x}^2\Bar{p}+\Bar{r}\Bar{y}$     &$\Bar{r}\Bar{x}+\Bar{r}\Bar{y}+\Bar{x}^2\Bar{q}$    &$\Bar{x}^2\Bar{p}\Bar{x}+\Bar{r}\Bar{y}\Bar{x}$ \\  \cline{2-6}
\end{tabular}
\end{flushleft}
\begin{flushright}
\begin{tabular}{cc|cc|} 
 \multicolumn{2}{c}{}    &     \multicolumn{2}{c}{}  \\
 \multicolumn{1}{c}{} &    &$y^2$      &$x^2y$    \\  \cline{2-4}
\multirow{6}*{$b$}    &$1$    &$0$        &$0$       \\
    &$x$   &$\Bar{x}\Bar{q}+\Bar{r}$&$\Bar{r}\Bar{y}$    \\
    &$y$   &$\Bar{x}\Bar{q}+\Bar{q}\Bar{y}+\Bar{r}$      &$\Bar{p}\Bar{x}\Bar{y}+\Bar{x}\Bar{p}\Bar{y}+\Bar{x}^2\Bar{q}+\Bar{r}\Bar{y}+\Bar{r}\Bar{x}+\Bar{x}^2\Bar{y}$   \\
    &$x^2$ &$\Bar{r}\Bar{x}+\Bar{r}\Bar{y}+\Bar{x}^2\Bar{q}$   &$\Bar{r}\Bar{x}\Bar{y}$    \\
    &$y^2$ &$\Bar{x}\Bar{q}\Bar{y}+\Bar{q}\Bar{y}^2+\Bar{r}\Bar{y}$      &$\Bar{q}\Bar{x}^2\Bar{y}+\Bar{r}\Bar{x}\Bar{y}
    +\Bar{p}\Bar{x}^2\Bar{y}+\Bar{y}\Bar{r}\Bar{y}$ \\
    &$x^2y$&$\Bar{x}^2\Bar{q}\Bar{y}+\Bar{r}\Bar{x}\Bar{y}+\Bar{r}\Bar{y}^2$   &$\Bar{x}^2\Bar{p}\Bar{x}\Bar{y}+\Bar{r}\Bar{y}\Bar{x}\Bar{y}$\\  \cline{2-4}
\end{tabular}
\end{flushright}
Direct verification shows that $df_2(b,c)=f_1(bc)-f_1(b)f_1(c)$. In fact, this $f_2$ is already simplified in the sense of Proposition~\ref{vereinfachlemma}, which is why some apparently unnecessary terms occur (e.g. the $\bar{x}^2$ in $f_2(y,x^2)$). Indeed, $\class\circ f_2=0$, as one can check using Proposition~\ref{pqreigenschaften}. 

\subsection{Computation of $m$}
We want to investigate\rem{?} the term
\[ m(a,b,c)=\class(f_1(a)f_2(b,c)) \]
for all $a,b,c\in\mathfrak{C}$. Since $f_2(b,c)$ is $8$-periodic, we have
$m(as^{2h},bs^i,cs^j)=m(a,b,c)s^{2h+i+j}$ for all integers $h,i,j$ and $a,b,c\in\mathfrak{C}$. Therefore it is enough to consider all triples $(a,b,c)\in\bigl(\mathcal{B}\cup\mathcal{B} s\bigr)\times\mathcal{B}\times\mathcal{B}$.\bigskip

Consider the case $a\in\mathcal{B}$. If $a=1$, then
$\class(f_1(a)f_2(b,c))=\class(f_2(b,c))=0$. If $|a|\geq 2$, then $f_1(a)f_2(b,c)$ is a sum of terms $\beta\bar{p}\alpha$, $\beta\bar{q}\alpha$ and $\beta\bar{r}\alpha$ where $\alpha$ and $\beta$ are monomials in $\bar{x}$ and $\bar{y}$, and the degree of $\beta$ is at least $2$. Hence $\class(f_1(a)f_2(b,c))=0$ by Proposition~\ref{pqreigenschaften}.

We are left with the cases $a=x$ and $a=y$. Consider $a=x$. By Proposition~\ref{pqreigenschaften} we get $\class(\bar{x}f_2(b,c))$ from $f_2(b,c)$ by the following rule: Put an $\bar{x}$ in front of all monomials in $\bar{x}$ and $\bar{y}$. Then remove all summands containing $\bar{p}$, $\bar{q}$ or $\bar{r}$, except those beginning with $\bar{p}$, where we replace the $\bar{p}$ by $x^2$. Finally, replace all $\bar{x}$ and $\bar{y}$ by $x$ and $y$, respectively. Using this procedure, we get the following table for $\class(\bar{x}f_2(b,c))$:
\begin{center}
\begin{tabular}{cc|cccccc|} 
\multicolumn{2}{c}{\multirow{2}*{$\class(\bar{x}f_2(b,c))$}}  & \multicolumn{6}{c}{$c$}     \\
\multicolumn{1}{c}{}    &      &$1$&$x$         &$y$         &$x^2$       &$y^2$      &$x^2y$    \\  \cline{2-8}
\multirow{6}*{$b$}    &$1$   &$0$&$0$         &$0$         &$0$         &$0$        &$0$   \\
    &$x$   &$0$&$0$         &$0$         &$0$         &$0$        &$0$       \\
    &$y$   &$0$&$x^2$       &$0$         &$x^3+x^3$   &$0$        &$x^3y+x^3y$ \\
    &$x^2$ &$0$&$0$         &$0$         &$0$         &$0$        &$0$       \\
    &$y^2$ &$0$&$x^3$       &$0$         &$x^4$       &$0$        &$x^4y$    \\
    &$x^2y$&$0$&$0$         &$0$         &$0$         &$0$        &$0$       \\  \cline{2-8}
\end{tabular}
\end{center}
Most of these expressions vanish, the only remaining term is
$m(x,y,x)=x^2.$
For the case $a=y$ we use a similar method resulting from Proposition~\ref{pqreigenschaften}, and we end up with the following:
\begin{center}
\begin{tabular}{cc|cccccc|} 
\multicolumn{2}{c}{\multirow{2}*{$\class(\bar{y}f_2(b,c))$}}  & \multicolumn{6}{c}{$c$}     \\
\multicolumn{1}{c}{} &      &$1$&$x$         &$y$         &$x^2$       &$y^2$      &$x^2y$    \\  \cline{2-8}
\multirow{6}*{$b$} &$1$   &$0$&$0$         &$0$         &$0$         &$0$        &$0$       \\
    &$x$   &$0$&$0$         &$y^2$       &$0$         &$0$        &$0$       \\
    &$y$   &$0$&$y^2+y^2$   &$0$         &$y^2x+yx^2$ &$y^3$      &$y^2xy+yx^2y$ \\
    &$x^2$ &$0$&$0$         &$0$         &$0$         &$0$        &$0$       \\
    &$y^2$ &$0$&$y^2x+yx^2$ &$y^3$     &$y^2x^2+y^2x^2$ &$y^4$ &$y^2x^2y+y^2x^2y$       \\
    &$x^2y$&$0$&$0$         &$0$         &$0$         &$0$        &$0$       \\  \cline{2-8}
\end{tabular}
\end{center}
Again, only one expression is non-zero, namely
$m(y,x,y)=y^2.$

The case $a\in\mathcal{B}s$ is slightly more difficult. Consider the map
\[ h(b,c)=\Bar{s}f_2(b,c)\Bar{s}^{-1}-f_2(b,c), \]
measuring how far away $f_2$ is from $4$-periodicity.\rem{?}
From the equations
\begin{eqnarray*}
\Bar{s}\Bar{p}\Bar{s}^{-1}&=&\Bar{p} \\
\Bar{s}\Bar{q}\Bar{s}^{-1}&=&\Bar{q}+\Bar{x}+\Bar{y} \\
\Bar{s}\Bar{r}\Bar{s}^{-1}&=&\Bar{r}+\Bar{x}^2  
\end{eqnarray*}
we get the following table for $h$:
\begin{flushleft}
\begin{tabular}{cc|cccc|} 
\multicolumn{2}{c}{\multirow{2}*{$\bar{h}(b,c)$}}    & \multicolumn{4}{c}{$c$}   \\
\multicolumn{1}{c}{} &  &$1$&$x$         &$y$         &$x^2$         \\  \cline{2-6}
\multirow{6}*{$b$}    &$1$   &$0$&$0$         &$0$         &$0$             \\
    &$x$   &$0$&$0$         &$\Bar{x}+\Bar{y}$&$\Bar{x}^2$       \\
    &$y$   &$0$&$\Bar{x}+\Bar{y}$&$0$         &$0$ \\
    &$x^2$ &$0$&$\Bar{x}^2$       &$0$         &$\Bar{x}^2\Bar{x}$     \\   
    &$y^2$ &$0$&$\Bar{x}^2+(\Bar{x}+\Bar{y})\Bar{x}$ &$\Bar{x}(\Bar{x}+\Bar{y})+(\Bar{x}+\Bar{y})\Bar{y}+\Bar{x}^2$       &$(\Bar{x}+\Bar{y})\Bar{x}^2+\Bar{x}^2\Bar{x}+\Bar{y}\Bar{x}^2$   \\
    &$x^2y$&$0$&$\Bar{x}^2\Bar{y}$     &$\Bar{x}^2\Bar{x}+\Bar{x}^2\Bar{y}+\Bar{x}^2(\Bar{x}+\Bar{y})$    &$\ast$ \\  \cline{2-6}
\end{tabular}
\end{flushleft}
\begin{flushright}
\begin{tabular}{cc|cc|}
\multicolumn{2}{c}{\multirow{2}*{}}    & \multicolumn{2}{c}{}   \\
\multicolumn{1}{c}{}  &      &$y^2$      &$x^2y$    \\  \cline{2-4}
\multirow{6}*{$b$}    &$1$    &$0$        &$0$       \\
    &$x$   &$\Bar{x}(\Bar{x}+\Bar{y})+\Bar{x}^2$&$\Bar{x}^2\Bar{y}$    \\
    &$y$   &$\Bar{x}(\Bar{x}+\Bar{y})+(\Bar{x}+\Bar{y})\Bar{y}+\Bar{x}^2$      &$\Bar{x}^2(\Bar{x}+\Bar{y})+\Bar{x}^2\Bar{y}+\Bar{x}^2\Bar{x}$   \\
    &$x^2$ &$\Bar{x}^2\Bar{x}+\Bar{x}^2\Bar{y}+\Bar{x}^2(\Bar{x}+\Bar{y})$   &$\ast$    \\
    &$y^2$ &$\Bar{x}(\Bar{x}+\Bar{y})\Bar{y}+(\Bar{x}+\Bar{y})\Bar{y}^2+\Bar{x}^2\Bar{y}$      &$\ast$ \\
    &$x^2y$&$\ast$   &$\ast$\\  \cline{2-4}
\end{tabular}
\end{flushright}
where $\ast$ denotes certain homogeneous polynomials in $\bar{x}$ and $\bar{y}$ of degree at least $4$. Applying $\class$ to this table and using relations in $\Tate^*_L(k,k)$, we get
\begin{center}
\begin{tabular}{cc|cccccc|} 
\multicolumn{2}{c}{\multirow{2}*{$\class(h(b,c))$}}  &  \multicolumn{6}{c}{$c$} \\
\multicolumn{1}{c}{}   &      &$1$&$x$         &$y$         &$x^2$  &$y^2$      &$x^2y$        \\ \cline{2-8}
\multirow{6}*{$b$}    &$1$   &$0$&$0$         &$0$         &$0$  & $0$       &$0$          \\
    &$x$   &$0$&$0$         &$x+y$ & $x^2$ & $y^2+x^2$&$x^2y$ \\
    &$y$   &$0$&$x+y$&$0$         &$0$  & $y^2$       &$0$   \\
    &$x^2$ &$0$&$x^2$       &$0$         &$0$  &$0$   &$0$      \\
    &$y^2$ &$0$&$y^2+x^2$ &$y^2$       &$0$  &$0$   &$0$      \\
    &$x^2y$&$0$&$x^2y$     &$0$    &$0$   &$0$   &$0$     \\  \cline{2-8}
\end{tabular}
\end{center}

By definition of $h$ we have $h(b,c)\bar{s}=\bar{s}f_2(b,c)-f_2(b,c)\bar{s}$, hence
\[ \class(h(b,c))s=\class(\Bar{s}f_2(b,c))-\underbrace{\class(f_2(b,c))}_{0} s =m(s,b,c). \]
Therefore, this table shows the values $m(s,b,c)$ with $b,c\in\mathcal{B}$. On the other hand, we know that $m$ is a Hochschild-cocycle, in particular for all $a,b,c\in\mathcal{B}$
\[ a\, m(s,b,c)+m(as,b,c)+m(a,sb,c)+m(a,s,bc)+m(a,s,b)c =0. \]
Using $m(a,s,b)c=m(a,1,b)sc=0$, $m(a,s,bc)=m(a,1,bc)s=0$ and $m(a,sb,c)=m(a,b,c)s$, we get
\begin{equation}\label{mgleichung} m(as,b,c)=a\, m(s,b,c)+m(a,b,c)s \end{equation}
We know the right hand side for all $a,b,c\in\mathcal{B}$. Gathering all results, we get the following theorem.
\begin{satz}\label{quatemaintheorem} The canonical element $\gamma_Q$ is represented by the Hochschild cocycle $m$ which satisfies $m(as^{2h},bs^i,cs^j)=m(a,b,c)s^{2h+i+j}$,
\begin{center}
\begin{tabular}{l|l|l} 
$m(x,y,x)=x^2$ &          $m(s,x,x^2y)=x^2ys$       &  $m(sy,x,x^2)=x^2ys$ \\
$m(y,x,y)=y^2$ &          $m(s,x^2y,x)=x^2ys$       &  $m(sy,x^2,x)=x^2ys$\\
$m(s,x,y)=(x+y)s$ &       $m(sx,y,x)=(x^2+y^2)s$    &  $m(sx,x,y^2)=x^2ys$ \\
$m(s,y,x)=(x+y)s$ &       $m(sy,x,y)=(x^2+y^2)s$    &  $m(sy,x,y^2)=x^2ys$\\
$m(s,x,x^2)=x^2s$ &       $m(sx,x,y)=y^2s$          &  $m(sx,y^2,x)=x^2ys$  \\
$m(s,x^2,x)=x^2s$ &       $m(sy,y,x)=x^2s$          &  $m(sy,y^2,x)=x^2ys$ \\
$m(s,x,y^2)=(x^2+y^2)s$ & $m(sx^2,x,y)=x^2ys$       &  $m(sx,y,y^2)=x^2ys$ \\
$m(s,y^2,x)=(x^2+y^2)s$ & $m(sy^2,x,y)=x^2ys$       &  $m(sx,y^2,y)=x^2ys$ \\
$m(s,y,y^2)=y^2s$ &       $m(sx^2,y,x)=x^2ys$       &   \\
$m(s,y^2,y)=y^2s$ &       $m(sy^2,y,x)=x^2ys$       &   \\
\end{tabular}
\end{center}
and vanishes on all other triples of elements of $\mathfrak{C}$.
\end{satz}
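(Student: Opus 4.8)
The plan is to push the computation set up in \S\ref{sklasse} and the two subsequent subsections to its end and simply read off the table. Having fixed the cycle-selection map $f_1$ and an $f_2$ with $df_2(b,c)=f_1(bc)-f_1(b)f_1(c)$ that is moreover simplified so that $\class\circ f_2=0$ (Proposition~\ref{vereinfachlemma}), the Hochschild cocycle $m$ representing $\gamma_Q$ collapses on a $k$-basis to the single surviving term $m(a,b,c)=\class(f_1(a)f_2(b,c))$. First I would record the periodicity: since $\bar s$ commutes with $\bar x$ and $\bar y$, the $h$-table shows that $h(b,c)=\bar s f_2(b,c)\bar s^{-1}-f_2(b,c)$ is always a polynomial in $\bar x,\bar y$, so $\bar s h(b,c)\bar s^{-1}=h(b,c)$ and hence $\bar s^{2}f_2(b,c)\bar s^{-2}=f_2(b,c)+2h(b,c)=f_2(b,c)$ in characteristic $2$. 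This yields $m(as^{2h},bs^i,cs^j)=m(a,b,c)s^{2h+i+j}$, so it is enough to evaluate $m$ on triples in $(\mathcal B\cup\mathcal B s)\times\mathcal B\times\mathcal B$.

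Next I would split on the first slot. For $a=1$: $m(1,b,c)=\class(f_2(b,c))=0$. For $a\in\{x^2,y^2,x^2y\}$: each summand of $f_1(a)f_2(b,c)$ is either of the form $\beta\bar p\alpha$, $\beta\bar q\alpha$, $\beta\bar r\alpha$ with $|\beta|\ge 2$, or a pure monomial in $\bar x,\bar y$ of degree $\ge 4$; the former have trivial $\class$ by Proposition~\ref{pqreigenschaften} and the latter because monomials of degree $>3$ vanish in $\hat{H}^*(Q)$, so $m(a,b,c)=0$. For $a=x$ and $a=y$: one computes $\class(\bar x f_2(b,c))$ and $\class(\bar y f_2(b,c))$ directly from the $f_2$-table by the substitution recipe furnished by Proposition~\ref{pqreigenschaften} (prepend $\bar x$, resp. $\bar y$, to every monomial; delete every summand containing $\bar p,\bar q,\bar r$ except replace a leading $\bar p$ by $x^2$ and a leading $\bar q$ or $\bar r$ by $y^2$, and so on), then reduce each resulting monomial using $x^2+y^2=xy$, $x^3=y^3=0$, $x^2y=xy^2$ and $2=0$; the only survivors are $m(x,y,x)=x^2$ and $m(y,x,y)=y^2$. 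Finally, for $a\in\mathcal B s$ I would invoke \eqref{mgleichung}, $m(as,b,c)=a\,m(s,b,c)+m(a,b,c)s$ for $a\in\mathcal B$ (which comes from the Hochschild cocycle identity for $m$ together with the vanishings $m(a,s,bc)=m(a,1,bc)s=0$, $m(a,s,b)c=0$ and the relation $m(a,sb,c)=m(a,b,c)s$); here $m(s,b,c)=\class(h(b,c))\,s$, and the $\class(h(b,c))$-table is obtained from the $h$-table — itself read off from $\bar s\bar p\bar s^{-1}=\bar p$, $\bar s\bar q\bar s^{-1}=\bar q+\bar x+\bar y$, $\bar s\bar r\bar s^{-1}=\bar r+\bar x^2$ — by the same $\class$-bookkeeping. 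Combining these with the already-computed $m(a,b,c)$ for $a\in\mathcal B$ produces the full list, everything not appearing being zero.

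The whole difficulty is bookkeeping, and it is concentrated in two places. First, one must verify that the displayed $f_2$ actually satisfies $df_2(b,c)=f_1(bc)-f_1(b)f_1(c)$ for each of the $36$ pairs $(b,c)\in\mathcal B\times\mathcal B$ and that $\class\circ f_2=0$; this is precisely what forces in the apparently superfluous summands (such as the $\bar x^2$ in $f_2(y,x^2)$), and it rests on the explicit chain-level products $\bar x^2,\bar y^2,\bar x\bar y,\bar y\bar x$ and the base homotopies $\bar p,\bar q,\bar r$ constructed in \S\ref{basehomotopies}. Second, one must track carefully, through the two successive applications of $\class$ (first to $f_2$, then to $h$), which monomials survive and how they collapse in $\hat{H}^*(Q)$, since in characteristic $2$ it is easy to drop or double a term. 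Once both are done, Theorem~\ref{quatemaintheorem} is exactly this tabulation ($m$ being a Hochschild cocycle by the general construction in \S\ref{kanonischel}); I would then expect the non-triviality of $\gamma_Q$ asserted in Theorem~\ref{haupttheorem} to require a short separate argument showing that this particular $m$ is not a Hochschild coboundary.
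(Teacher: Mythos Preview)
Your approach is essentially identical to the paper's: the same reduction via $\class\circ f_2=0$ to $m(a,b,c)=\class(f_1(a)f_2(b,c))$, the same case split on $a\in\mathcal B$ versus $a\in\mathcal B s$, the same use of the $h$-table and equation~\eqref{mgleichung}, and even a cleaner justification of the $8$-periodicity of $f_2$ than the paper bothers to give. One small caution: your stated substitution recipe for $a\in\{x,y\}$ is slightly garbled (for $a=x$ a leading $\bar q$ contributes $0$, not $y^2$, and a leading $\bar r$ contributes $0$ in both cases by Proposition~\ref{pqreigenschaften}), but since you explicitly defer to that proposition and arrive at the correct survivors $m(x,y,x)=x^2$, $m(y,x,y)=y^2$, this is only a wording slip.
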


\begin{satz}
The element $\gamma_Q$ is non-trivial.
\end{satz}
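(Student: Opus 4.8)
The plan is to show that the explicit Hochschild cocycle $m$ of Theorem~\ref{quatemaintheorem} is not a coboundary. Suppose for contradiction that $\gamma_Q=0$; then $m=\delta\varphi$ for some normalised $2$-cochain $\varphi\in C^{2,-1}(\hat H^*(Q))$, i.e.\ a family of $k$-linear maps $\varphi\colon\hat H^i(Q)\otimes\hat H^j(Q)\to\hat H^{i+j-1}(Q)$ with $\varphi(1,-)=\varphi(-,1)=0$ satisfying
\[ m(a,b,c)=a\,\varphi(b,c)+\varphi(ab,c)+\varphi(a,bc)+\varphi(a,b)\,c \]
for all homogeneous $a,b,c$ (in characteristic $2$ all signs disappear). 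The idea is to evaluate this identity on the triples listed in Theorem~\ref{quatemaintheorem} and push it until it becomes inconsistent.

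First I would reduce to a finite problem. By the $s$-equivariance $m(as^{2h},bs^i,cs^j)=m(a,b,c)\,s^{2h+i+j}$ together with \eqref{mgleichung}, the cocycle $m$ is completely determined by its values on $\bigl(\mathcal{B}\cup\mathcal{B}s\bigr)\times\mathcal{B}\times\mathcal{B}$; moreover, since every monomial in $x,y$ of degree exceeding $3$ vanishes in $\hat H^*(Q)$, each $\hat H^n(Q)$ is at most two-dimensional and supported in a single monomial pattern, so only finitely many components of $\varphi$ — those on pairs of elements of $\mathfrak{C}$ of sufficiently small degree — actually enter the relevant instances of $m=\delta\varphi$. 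Thus it suffices to exhibit one inconsistent finite sub-system of linear equations over $k$.

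For the inconsistency I would argue in two layers. The triples with no $s$ give the equations coming from $m(x,y,x)=x^2$, $m(y,x,y)=y^2$ and from the vanishing of $m$ on the remaining triples in $\{x,y\}^{\times 3}$; using $xy=yx=x^2+y^2$ and $x^3=y^3=0$, these pin down $\varphi$ on the degree-$(1,1)$ pairs and constrain it on degree-$(2,1)$ and $(1,2)$ pairs, and I expect them to be consistent in isolation — so the subalgebra generated by $x$ and $y$ does not by itself obstruct, in analogy with the computation for cyclic $p$-groups in Theorem~\ref{zyklsatz}. The genuine obstruction should live in the triples involving $s$, which encode precisely the failure of $4$-periodicity of the base homotopies (see Remark~\ref{keine4periode} and the relations $\bar s\bar q=(\bar q+\bar x+\bar y)\bar s$, $\bar s\bar r=(\bar r+\bar x^2)\bar s$). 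Concretely I would combine the equations produced by $m(s,x,y)=m(s,y,x)=(x+y)s$, $m(sx,y,x)=m(sy,x,y)=(x^2+y^2)s$, $m(sx,x,y)=y^2s$, $m(sy,y,x)=x^2s$ and $m(s,x,x^2)=m(s,x^2,x)=x^2s$, together with the instances of \eqref{mgleichung} with which $\varphi$ must be compatible, and show that they force a relation such as $(x+y)s=0$ in $\hat H^*(Q)$ — the same contradiction that already appears in Remark~\ref{keine4periode}. This gives $\gamma_Q\neq 0$.

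The step I expect to be the main obstacle is organising this linear algebra so that the inconsistency is transparent rather than a brute-force elimination over the span of many $\varphi$-components. An alternative route, which moreover establishes the second assertion of Theorem~\ref{haupttheorem} at no extra cost, is to bypass the Hochschild calculation entirely: let $X=\coker\smatrix{y & x+y\\ x & y}$, choose a free resolution of $X$ over $\hat H^*(Q)$, compute the image $\Id_X\cup\gamma_Q\in\Ext^{3,-1}_{\hat H^*(Q)}(X,X)$ directly from the cocycle $m$, and check that it is non-zero; by Theorem~\ref{bkstheorem} this simultaneously proves $\gamma_Q\neq 0$ and that $X$ is not a direct summand of a realisable module.
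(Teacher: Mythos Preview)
Your overall strategy --- assume $m=\delta\varphi$ and exhibit an inconsistent finite sub-system of the resulting linear equations --- is exactly what the paper does. However, your expectation about \emph{where} the inconsistency lives is wrong, and this is not a minor organisational point: you explicitly say that the triples in $\{x,y\}^{\times 3}$ should be ``consistent in isolation'' and that ``the genuine obstruction should live in the triples involving $s$''. In fact the opposite is true. The paper's proof never touches $s$ at all; it uses only the five coboundary identities for $(y,x,y)$, $(x,y,y)$, $(x,x,x)$, $(x,y,x)$, $(y,y,x)$, adds them up, and (using $xy=x^2+y^2$) obtains $x^2+y^2=x\cdot(\varphi(x,y)+\varphi(y,x))$, whence $\varphi(x,y)+\varphi(y,x)=y$. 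Swapping the roles of $x$ and $y$ gives $\varphi(x,y)+\varphi(y,x)=x$, a contradiction. So the subalgebra generated by $x$ and $y$ already obstructs; no $s$-equations are needed.

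Your analogy with the cyclic case misleads you here: for $C_n$ with $n\ge4$ the representing cocycle $m$ is literally zero, so of course the $x,y$-equations are consistent; for $Q_8$ the cocycle has $m(x,y,x)=x^2$ and $m(y,x,y)=y^2$, and these two nonzero values, together with the vanishing on the other degree-one triples, are already inconsistent with any coboundary. The $s$-part of $m$ encodes additional secondary information (and is what makes the matric Massey product in the second half of Theorem~\ref{haupttheorem} nontrivial), but it is not needed for $\gamma_Q\neq0$. Your alternative route via $\Id_X\cup\gamma_Q$ is sound and is essentially how the paper handles the non-realisability of $X$, but for the bare non-triviality statement the five-equation argument is both shorter and cleaner.
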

\beweis
Assume $m=dg$ for some Hochschild $(2,-1)$-cochain $g$. Then,
\[ m(a,b,c)=(dg)(a,b,c)=a\, g(b,c)+g(ab,c)+g(a,bc)+g(a,b)c \]
for all $a,b,c$. In particular,
\begin{align*}
y^2=m(y,x,y)&=yg(x,y)+g(yx,y)+g(y,xy)+g(y,x)y  \\
0=m(x,y,y)&=xg(y,y)+g(xy,y)+g(x,y^2)+g(x,y)y \\
0=m(x,x,x)&=xg(x,x)+g(x^2,x)+g(x,x^2)+g(x,x)x \\
x^2=m(x,y,x)&=xg(y,x)+g(xy,x)+g(x,yx)+g(x,y)x \\
0=m(y,y,x)&=yg(y,x)+g(y^2,x)+g(y,yx)+g(y,y)x
\end{align*}
Adding up these equations we get (using $x^2+y^2=xy$)
\[ x^2+y^2=x\cdot (g(x,y)+g(y,x)). \]
This implies $g(x,y)+g(y,x)=y$. On the other hand, interchanging the roles of $x$ and $y$ we get $g(x,y)+g(y,x)=x$, a contradiction.\qed

\subsection{Matric Massey products}
In order to construct a module which is not a direct summand of a realisable one, we will use the calculus of matric Massey products (introduced by May, \cite{May}). Let us recall the basic definitions and properties. For simplicity, we shall stick to the case of characteristic $2$. Let $A$ be a differential graded algebra over the field $k$. We denote by $\mathcal{F}(A)$ the set of all (right) $A$-modules of the form
$\bigoplus_{\mu=1}^m A[m_\mu]$ (for some natural number $m$ and some integers $m_1,m_2,\dots,m_m$). 

\begin{definition}
For any two dg-$A$-modules $P,Q$ let $\Mat(P,Q)$ be the set of all 
$A$-module-homomorphisms from $P$ to $Q$. If
\[ P=\bigoplus_{\mu=1}^m A[-m_\mu] \in \mathcal{F}(A), \]
we identify $\Mat(P,Q)$ with those $1\times m$-matrices $X$ having entries in $Q$ and satisfying $|X_\mu|=m_\mu$. If further
\[ Q=\bigoplus_{\nu=1}^n A[-n_\nu] \in \mathcal{F}(A), \]
we identify $\Mat(P,Q)$ with the $n\times m$-matrices $X$ having entries in $Q$ and satisfying $|X_{\nu,\mu}|=m_\mu-n_\nu$. For all such matrices we define
$(dX)_{\nu,\mu} = dX_{\nu,\mu}.$
\end{definition}

Now we are ready to define matric Massey products. 
Let $P,Q,R\in \mathcal{F}(H^*A)$ and let $M$ be an arbitrary dg-$A$-module. Suppose we are given maps
\[ R\stackrel{Y}{\longrightarrow} Q\stackrel{X}{\longrightarrow} P\stackrel{W}{\longrightarrow} H^*M \]
(represented by matrices $Y$ and $X$ with entries in $H^*A$ and a vector $W$ with entries in $H^*M$) such that $WX=0$ and $XY=0$. By choosing a representative in $M$ for every entry of $W$ we obtain a matrix $\bar{W}$; similarly we can choose matrices $\bar{X}$ and $\bar{Y}$ with entries in $A$. Then there are matrices $\bar{T}:\bar{Q}\rightarrow M[1]$ and $\bar{U}:\bar{R}\rightarrow \bar{P}[1]$ with entries in $M$ and $A$ respectively, satisfying $\bar{W}\bar{X}=d\bar{T}$ and $\bar{X}\bar{Y}=d\bar{U}$. Then all entries of the matrix $\bar{B}=\bar{T}\bar{Y}-\bar{W}[1]\bar{U}$ (as a map $\bar{R}\rightarrow M[1]$) are cocycles; it therefore represents a matrix $B$ with entries in $H^*M$. The set of matrices obtained this way is a coset of $W[1]\Mat(R,P[1])+\Mat(Q,H^*M[1])Y$ in $\Mat(R,H^*M[1])$, which will be denoted by $\left<W,X,Y\right>$.

\begin{lemma}[Juggling formula] Suppose we are additionally given a module $S\in\mathcal{F}(H^*A)$ and a morphism $Z:S\longrightarrow R$ satisfying $YZ=0$. Then
\[ W\left<X,Y,Z\right>=\left<W,X,Y\right>Z \]
as cosets of $W\cdot\Mat(R,P[1])\cdot Z$ in $\Mat(S,H^*M[1])$.
\end{lemma}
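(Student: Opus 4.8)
The plan is to reduce the identity to a single cochain-level computation, exactly as in May's treatment of ordinary triple Massey products. Both sides of the asserted equality are, by construction, cosets of a subgroup of $\Mat(S,H^*M[1])$, so it suffices to show (a) that they are cosets of the \emph{same} subgroup, and (b) that some representative of $W\left<X,Y,Z\right>$ and some representative of $\left<W,X,Y\right>Z$ agree in $H^*M[1]$. The device making (b) work is to compute both brackets from one common set of choices: fix matrix lifts $\bar W,\bar X,\bar Y,\bar Z$ of $W,X,Y,Z$ (with entries in $M,A,A,A$), and fix matrices $\bar T,\bar U,\bar V$ (entries in $M,A,A$) with $\bar W\bar X=d\bar T$, $\bar X\bar Y=d\bar U$, $\bar Y\bar Z=d\bar V$. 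This is legitimate since $WX=0$, $XY=0$ and $YZ=0$ in cohomology and the three defining equations involve disjoint pairs of factors; in particular we may use the \emph{same} null-homotopy $\bar U$ of $\bar X\bar Y$ when forming $\left<W,X,Y\right>$ (from $\bar W,\bar X,\bar Y,\bar T,\bar U$) and when forming $\left<X,Y,Z\right>$ (from $\bar X,\bar Y,\bar Z,\bar U,\bar V$). As throughout this section we work in characteristic $2$, so all signs are ignored.

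For (a): the indeterminacy of $\left<X,Y,Z\right>$ is $X\Mat(S,Q[1])+\Mat(R,P[1])Z$, hence the indeterminacy of $W\left<X,Y,Z\right>$ is $WX\Mat(S,Q[1])+W\Mat(R,P[1])Z$, and the first summand vanishes because $WX=0$. Symmetrically, the indeterminacy of $\left<W,X,Y\right>$ is $W\Mat(R,P[1])+\Mat(Q,H^*M[1])Y$, hence the indeterminacy of $\left<W,X,Y\right>Z$ is $W\Mat(R,P[1])Z+\Mat(Q,H^*M[1])YZ$, and the second summand vanishes because $YZ=0$. Thus both sides are cosets of $W\cdot\Mat(R,P[1])\cdot Z$, which is exactly the subgroup named in the statement.

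For (b): with the common choices above, $\left<W,X,Y\right>$ is represented by $\bar B=\bar T\bar Y+\bar W\bar U$ and $\left<X,Y,Z\right>$ by $\bar C=\bar U\bar Z+\bar X\bar V$, both with cocycle entries (this is part of the definition of the brackets). Since $\bar W$ and $\bar Z$ are morphisms of dg-$A$-modules, the matrix products $\bar W\bar C$ and $\bar B\bar Z$ again have cocycle entries, and represent elements of $W\left<X,Y,Z\right>$ and $\left<W,X,Y\right>Z$ respectively. Suppressing the (cosmetic) degree shifts, the Leibniz rule together with $\bar W\bar X=d\bar T$ and $\bar Y\bar Z=d\bar V$ gives
\begin{align*}
\bar W\bar C &= \bar W\bar U\bar Z+\bar W\bar X\bar V = \bar W\bar U\bar Z+(d\bar T)\bar V \\
&= \bar W\bar U\bar Z+d(\bar T\bar V)+\bar T(d\bar V) \\
&= \bigl(\bar W\bar U\bar Z+\bar T\bar Y\bar Z\bigr)+d(\bar T\bar V) = \bar B\bar Z+d(\bar T\bar V).
\end{align*}
Hence $\bar W\bar C$ and $\bar B\bar Z$ differ by the coboundary $d(\bar T\bar V)$, so they define the same class in $H^*M[1]$. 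Combined with (a), namely that the two sides are cosets of the same subgroup, this forces $W\left<X,Y,Z\right>=\left<W,X,Y\right>Z$.

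I expect the only real work to be bookkeeping: keeping the sources and targets of the shifted matrices aligned so that the compositions $\bar W\bar C$, $\bar B\bar Z$ and $\bar T\bar V$ make sense as maps between the correctly shifted modules, and (if one wanted the formula beyond characteristic $2$) tracking the Koszul signs in the Leibniz rule and in the definitions of $\bar B$ and $\bar C$. There is no conceptual obstacle: the displayed identity is precisely the matric analogue of the classical juggling/shuffle relation for triple Massey products.
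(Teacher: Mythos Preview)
Your argument is correct. The paper does not supply its own proof of the juggling formula; it simply records that the statement is a special case of Corollary~3.2.(iii) in May's paper on matric Massey products. What you have written is precisely the standard cochain-level verification behind that result, specialised to characteristic~$2$: fix a common null-homotopy $\bar U$ for $\bar X\bar Y$, produce representatives $\bar B=\bar T\bar Y+\bar W\bar U$ and $\bar C=\bar U\bar Z+\bar X\bar V$, and check $\bar W\bar C-\bar B\bar Z=d(\bar T\bar V)$. Your treatment of the indeterminacy in part~(a) is also correct and matches the subgroup named in the statement.

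One small wording issue: you say ``$\bar W$ and $\bar Z$ are morphisms of dg-$A$-modules''. Strictly, $\bar W$ and $\bar Z$ are matrices whose entries are \emph{cocycles} (representatives of the entries of $W$ and $Z$), and it is this cocycle property, together with the Leibniz rule for the $A$-action on $M$, that makes $\bar W\bar C$ and $\bar B\bar Z$ closed and justifies $d(\bar T\bar V)=(d\bar T)\bar V+\bar T(d\bar V)$. This is exactly what you use, so the argument is unaffected; only the phrasing could be tightened.
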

This is a special case of Corollary 3.2.(iii) of \cite{May}. 

\begin{lemma} \label{cokernlemma}
Let $P,Q,R,S$ be as above. Suppose we are given maps
\[ S\stackrel{Z}{\longrightarrow} R\stackrel{Y}{\longrightarrow}
   Q\stackrel{X}{\longrightarrow} P \]
satisfying $XY=0$ and $YZ=0$. Furthermore, assume $0\not\in
\left<X,Y,Z\right>$. Then $C=\coker(X)$ is not a direct summand of a realisable module.
\end{lemma}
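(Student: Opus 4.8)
The plan is to reduce the non-realisability of $C=\coker(X)$ to the Benson--Krause--Schwede criterion (Theorem~\ref{bkstheorem}): it suffices to show that the image of $\gamma_A$ in $\Ext^{3,-1}_{H^*A}(C,C)$ is non-zero. To make contact with the Massey product, first I would observe that the hypotheses $XY=0$ and $YZ=0$ say exactly that the composite $S\stackrel{Z}{\to}R\stackrel{Y}{\to}Q\stackrel{X}{\to}P$ of modules in $\mathcal{F}(H^*A)$ is a two-step chain complex mapping to $C$; concatenating $X$ with the projection $P\to C$ and using right-exactness of $\coker$, the map $X:Q\to P$ factors canonically so that $C$ receives the data of a presentation. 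The triple Massey product $\left<X,Y,Z\right>$ is then a well-defined coset in $\Mat(S,H^*A[1])$ (with the stated indeterminacy $X\cdot\Mat(R,P[1])\cdot Z$), and the content of the lemma is that this coset is, up to the natural identifications, the obstruction class measuring the failure of the chosen presentation to lift to a module map defined over the dga $A$ rather than just over $H^*A$.

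The key step is to identify the Massey product $\left<X,Y,Z\right>$ with the image of $\gamma_A$ under the map $H^*A\text{-modules}\to\Ext$-groups. Concretely: the element $\gamma_A$ was constructed in \S\ref{kanonischel} out of a cycle-selection map $f_1:H^*A\to A$ and a chosen null-homotopy $f_2$ of $f_1(xy)-f_1(x)f_1(y)$. Given the maps $X,Y,Z$, lift their entries to matrices $\bar X,\bar Y,\bar Z$ over $A$ using $f_1$; the relations $XY=0$, $YZ=0$ force $\bar X\bar Y$ and $\bar Y\bar Z$ to be null-homotopic, and choosing the homotopies coherently with $f_2$ produces matrices $\bar U,\bar T$ whose combination $\bar B=\bar T\bar Z-\bar X[1]\bar U$ (or the appropriate $M=C$ analogue of the formula in the definition) represents a class in $\left<X,Y,Z\right>$. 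Tracing through the definition of $\gamma_A$ and the cup-product pairing $\Hom_{H^*A}(C,C)\otimes\hochschild^{*,*}(H^*A)\to\Ext^{*,*}_{H^*A}(C,C)$ — exactly as in the construction of the map $\phi\mapsto\Id_X\cup\phi$ — one checks that $\Id_C\cup\gamma_A$ is represented by precisely this $\bar B$, modulo the same indeterminacy subgroup. Hence $0\notin\left<X,Y,Z\right>$ translates into $\Id_C\cup\gamma_A\neq 0$ in $\Ext^{3,-1}_{H^*A}(C,C)$.

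From there the conclusion is immediate. By Theorem~\ref{bkstheorem}, if $\Id_C\cup\gamma_A\neq 0$ then $C$ is \emph{not} a direct summand of $\hat H^*(G,M)$ for any $kG$-module $M$ — that is, $C$ is not a direct summand of a realisable module. (In the abstract dga formulation one uses the analogous statement for $A$ in place of the group algebra, which is the version of Theorem~\ref{bkstheorem} actually proved in \cite{bks}.) I expect the main obstacle to be the bookkeeping in the second step: matching the sign and indexing conventions of the matric Massey product $\bar B=\bar T\bar Y-\bar W[1]\bar U$ from \cite{May} with the conventions used in the construction of $\gamma_A$ in \S\ref{kanonischel}, and verifying that the indeterminacy coset $W[1]\Mat(R,P[1])+\Mat(Q,H^*M[1])Y$ from the definition of $\left<\,,\,,\,\right>$ corresponds under the identification to the coboundaries plus the cup-product indeterminacy, so that the non-vanishing statements really are equivalent rather than merely one implying the other. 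Once the identification is set up carefully, the juggling formula guarantees that the choice of the third arrow $Z$ and the module $S$ does not affect whether $0$ lies in the relevant coset, which is what makes the criterion clean to apply.
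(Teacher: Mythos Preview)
Your approach is genuinely different from the paper's, and it has a real gap.

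The paper's proof is direct and does not invoke $\gamma_A$ or Theorem~\ref{bkstheorem} at all. It argues by contradiction: assume $C$ is a retract of $H^*M$ via $i,r$, set $W=i\circ\pi$, and apply the juggling formula $\langle W,X,Y\rangle Z = W\langle X,Y,Z\rangle$. Picking $D\in\langle W,X,Y\rangle$ and a matching $E\in\langle X,Y,Z\rangle$ with $DZ=WE$, one composes with $r$ and uses the projectivity of $R$ and $S$ (they lie in $\mathcal{F}(H^*A)$) to write $E=FZ-XG$, which lands $E$ in the indeterminacy and forces $0\in\langle X,Y,Z\rangle$. The whole argument is five lines of diagram chasing once the juggling formula is available.

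Your route, by contrast, tries to identify $\langle X,Y,Z\rangle$ with $\Id_C\cup\gamma_A\in\Ext^{3,-1}_{H^*A}(C,C)$ and then quote Theorem~\ref{bkstheorem}. The problem is that this identification is not well-posed under the stated hypotheses. The element $\Id_C\cup\gamma_A$ depends only on $C$, hence only on $X$; the Massey product $\langle X,Y,Z\rangle$ depends on $Y$ and $Z$ as well, and it lives in a quotient of $\Mat(S,P[1])$, not in $\Ext^3(C,C)$. To pass from one to the other you would need $S\to R\to Q\to P\to C\to 0$ to be the beginning of a free resolution of $C$, but the lemma only assumes $XY=0$ and $YZ=0$, not exactness at $Q$ or $R$. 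Without exactness there is no map from $\Mat(S,P[1])/(\text{indet.})$ to $\Ext^3(C,C)$, so your ``key step'' simply cannot be carried out. Even granting exactness, the matching of the Massey bracket with $\Id_C\cup\gamma_A$ modulo indeterminacy is a non-trivial statement that you have not proved; the result you allude to (\cite{bks}, Lemma~5.14) only says $m(a,b,c)\in\langle a,b,c\rangle$ for scalar entries, which is far weaker than the bijection of cosets you need.

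In short: drop the detour through $\gamma_A$. Assume $C$ is a summand of $H^*M$, push the Massey product through the juggling formula, and use freeness of $R$ and $S$ to show any representative lies in the indeterminacy.
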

\beweis
Assume there were some dg-$A$-module $M$ and maps
\[ C\stackrel{i}{\longrightarrow} H^*M \stackrel{r}{\longrightarrow}
C, \] 
such that $r\circ i=\id_C$. Define $W=i\circ\pi$, where $\pi:P\rightarrow C$ is the projection.
Then $WX=i\circ \pi\circ X=0$, and by the juggling formula we have $\left<W,X,Y\right>Z=W\left<X,Y,Z\right>$ as cosets of $W\cdot \Mat(R,P[1])\cdot Z$ in
$\Mat(S,H^*M[1])$. Let $D\in \left<W,X,Y\right>$. There exists some $E\in\left<X,Y,Z\right>$ satisfying $DZ=WE$. 
\[
\xymatrix@=30pt{
Q[1]\ar[r]^X & P[1]\ar[r]^\pi\ar[dr]^W & C[1] \\
S\ar@{-->}[u]^G\ar@{-->}[ur]_E\ar[r]_Z & R\ar@{-->}[u]_F\ar[r]_-D & H^*M[1]\ar[u]_r } \]
Precomposition with $r$ yields
\[ r\circ D\circ Z=(r\circ W)\circ E=(r\circ i\circ \pi)\circ E
 =\pi\circ E. \]
Since $\pi$ is surjective and $R$ is projective, there is some $F:R\rightarrow P[1]$ such that
$\pi\circ F=r\circ D$. Then the image of $F\circ Z-E$ lies in the kernel of $\pi$, which is the image of $X$. Since $S$ is projective, there is some $G:S\rightarrow Q[1]$ satisfying $F\circ Z-E=X\circ G$. But then
\[ E=F\circ Z-X\circ G\in \Mat(R,P[1])\cdot Z+X\cdot
\Mat(S,Q[1]), \]
But now ~$E\in\left<X,Y,Z\right>$ implies $0\in\left<X,Y,Z\right>$, a contradiction.\qed

Now let $A$ be the endomorphism algebra of our projective resolution $\hat{P}_*$; let us write $\Lambda=H^*A$.
\begin{lemma}
We have
\[ 0\not\in\left<\begin{pmatrix} y & x+y \\ x & y \end{pmatrix},\begin{pmatrix} y & x+y \\ x & y \end{pmatrix},\begin{pmatrix} y & x+y \\ x & y \end{pmatrix} \right>. \]
Therefore, by Proposition~\ref{cokernlemma}, the cokernel of the map
\[ \Lambda[-1] \oplus \Lambda[-1] \xrightarrow{\smatrix{y & x+y \\ x & y}} \Lambda\oplus\Lambda \]
is a graded $\hat{H}^*(G,k)$-module which is not a direct summand of a realisable one.
\end{lemma}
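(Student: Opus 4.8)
The plan is to compute a representative of the Massey product $\langle M,M,M\rangle$ for the matrix $M = \smatrix{y & x+y \\ x & y}$ and show that no choice of the auxiliary homotopies yields the zero matrix; by Proposition~\ref{cokernlemma} this gives the non-realisability statement, and the juggling formula together with $\gamma_Q\neq 0$ is the conceptual reason it must be non-zero. First I would verify that $M$ satisfies the defining relation $M\cdot M = 0$ in $\Mat$: multiplying out, each entry of $M^2$ is a degree-$2$ element of $\Lambda$, and using $x^2+y^2=xy$, $x^2y=xy^2$ one checks directly that $M^2=0$ (this is the same matrix that appears in the statement of Theorem~\ref{haupttheorem}, so the computation is forced). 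This makes $\langle M,M,M\rangle$ well-defined as a coset of $M[1]\cdot\Mat(R,P[1]) + \Mat(Q,\Lambda[1])\cdot M$ inside $\Mat(S,\Lambda[1])$, i.e.\ a coset of $\{M[1]\cdot N_1 + N_2\cdot M\}$ in the space of $2\times 2$ matrices over $\Lambda$ of the appropriate degrees.

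The key step is to relate this matric Massey product to the secondary multiplication $m$ computed in Theorem~\ref{quatemaintheorem}. The clean way is to use that, for the diagonal situation $W=X=Y=M$, the Massey product $\langle M,M,M\rangle$ is computed by exactly the same recipe — choose lifts $\bar M$ of the entries to $A$, choose $\bar U$ with $\bar M\bar M = d\bar U$, and form $\bar U\bar M - \bar M[1]\bar U$ — as the Hochschild class $m$ applied entrywise, after expanding the matrix products. Concretely, writing $M$ in terms of the basis elements $x,y$ and using the maps $f_1,f_2$ from Section~4.3 as the chosen lifts, the null-homotopy $\bar U$ of $M^2$ can be built out of the $f_2(b,c)$, and the resulting matrix $B$ has entries that are $k$-linear combinations of the values $m(a,b,c)$ listed in Theorem~\ref{quatemaintheorem}. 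I would carry out this bookkeeping to get an explicit $2\times 2$ matrix $B$ over $\Lambda$; I expect its entries to be expressible in terms of $x^2$, $y^2$, $x^2+y^2$ in degree $2$ — for instance the $(1,1)$-entry should involve $m(y,x,y)=y^2$ and $m(x,y,x)=x^2$ type contributions.

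Finally I would show $0\notin\langle M,M,M\rangle$, i.e.\ that $B$ cannot be written as $M[1]\cdot N_1 + N_2\cdot M$ for any admissible $N_1,N_2$. This is where I expect the real work to be: it is a linear-algebra statement over the graded ring $\Lambda = k[x,y,s^{\pm1}]/(x^2{+}y^2{-}xy,\,x^3,\,y^3,\,x^2y{-}xy^2)$, and one must check that the two-sided ideal generated by the entries of $M$ in the relevant degree does not contain $B$. In practice, looking at the component of lowest internal degree and reducing modulo the ideal $(x,y)$ (or modulo $x$ alone, exactly as in the proof that $\gamma_Q\neq 0$: there the contradiction was $g(x,y)+g(y,x)=y$ versus $=x$) should produce an obstruction living in $k$ that is non-zero. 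The main obstacle is thus the degree-by-degree verification that the indeterminacy $M[1]\Mat(R,P[1])+\Mat(Q,\Lambda[1])M$ misses $B$; this should mirror the contradiction already obtained for $\gamma_Q$, since $\coker(M)$ is precisely the module on which $\Id\cup\gamma_Q$ is being detected, and the juggling formula $W\langle M,M,M\rangle = \langle W,M,M\rangle M$ ties the non-vanishing of the Massey product to the non-triviality of $m$ established above.
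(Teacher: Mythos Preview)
Your outline follows the right skeleton --- compute an explicit representative $B$ of the Massey product and then show it is not in the indeterminacy $M\cdot\Mat + \Mat\cdot M$ --- but the second step, which is the heart of the argument, has a genuine gap.

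Your proposed detection method for the indeterminacy check is to reduce modulo $x$ (or modulo $(x,y)$) and find an obstruction in $k$. This does not work here: the representative the paper obtains is $B=\smatrix{x^2+y^2 & 0 \\ x^2+y^2 & x^2+y^2}$, and in $\Lambda/(x)$ the relation $x^2+y^2=xy$ forces $y^2=0$, so $B$ itself reduces to zero; the same happens modulo $y$ or modulo $x+y$. Thus no such reduction separates $B$ from the indeterminacy. Nor does the juggling formula help: it transports Massey products along maps but does not by itself prove non-vanishing, and the implication you suggest (from $\gamma_Q\neq 0$ to $0\notin\langle M,M,M\rangle$) goes the wrong way --- it is the non-vanishing of this Massey product that, via Proposition~\ref{cokernlemma}, produces a non-realisable module witnessing $\gamma_Q\neq 0$.

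The paper's device is different and is the missing idea: one introduces the ``adjugate'' matrix $D=\smatrix{x & y \\ x+y & x}$, which satisfies $CD=DC=0$ for $C=M$. If $B=CQ+RC$ lay in the indeterminacy, then by cyclicity of the trace (the entries live in the commutative ring $\Lambda$) one would get $\tr(BD)=\tr(QDC)+\tr(RCD)=0$; but a direct computation gives $\tr(BD)=x^2y\neq 0$. This trace-against-an-annihilator trick is what makes the linear-algebra obstruction visible. Your computation of the representative via $f_2$ could in principle be made to work (and would land on the same $B$ up to indeterminacy, since the paper's null-homotopy matrix $\smatrix{\bar p+\bar q & \bar p\\ \bar p & \bar q}$ is built from exactly the base homotopies underlying $f_2$), but the crucial indeterminacy argument needs the trace trick, not a reduction modulo an ideal.
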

\beweis
We choose $\bar{x}$ and $\bar{y}$ as representatives for $x$ and $y$. Then
\[ \begin{pmatrix} \bar{y} & \bar{x}+\bar{y} \\ \bar{x} & \bar{y} \end{pmatrix} ^2 
 = \begin{pmatrix} \bar{y}^2+\bar{x}^2+\bar{y}\bar{x} & \bar{y}\bar{x}+\bar{x}\bar{y} \\
    \bar{y}\bar{x}+\bar{x}\bar{y} & \bar{y}^2+\bar{x}^2+\bar{x}\bar{y} \end{pmatrix} 
 = d\begin{pmatrix} \bar{p}+\bar{q} & \bar{p} \\ \bar{p} & \bar{q} \end{pmatrix}. \]
In particular, the matric Massey product is defined. One element of the Massey product is given by the class of 
\begin{multline*}
 \begin{pmatrix} \bar{p}+\bar{q} & \bar{p} \\ \bar{p} & \bar{q} \end{pmatrix}
 \cdot  \begin{pmatrix} \bar{y} & \bar{x}+\bar{y} \\ \bar{x} & \bar{y} \end{pmatrix}  
  +\begin{pmatrix} \bar{y} & \bar{x}+\bar{y} \\ \bar{x} & \bar{y} \end{pmatrix}\cdot \begin{pmatrix} \bar{p}+\bar{q} & \bar{p} \\ \bar{p} & \bar{q} \end{pmatrix} \\
  = \begin{pmatrix} \bar{p}\bar{y}+\bar{q}\bar{y}+\bar{p}\bar{x}+\bar{y}\bar{q}+\bar{x}\bar{p} & \bar{p}\bar{x}+\bar{q}\bar{x}+\bar{q}\bar{y}+\bar{y}\bar{p}+\bar{x}\bar{q}+\bar{y}\bar{q} \\ \bar{p}\bar{y}+\bar{q}\bar{x}+\bar{x}\bar{p}+\bar{x}\bar{q}+\bar{y}\bar{p} & \bar{p}\bar{x}+\bar{p}\bar{y}+\bar{q}\bar{y}+\bar{x}\bar{p}+\bar{y}\bar{q} \end{pmatrix}.
\end{multline*}
By Proposition~\ref{pqreigenschaften} the class of this matrix is $B=\smatrix{x^2+y^2 & 0 \\ x^2+y^2 & x^2+y^2}$. Let $C=\smatrix{y & x+y \\ x & y }$. Assume that $B$ lies in the indeterminacy; then there are $2\times 2$-matrices $Q$ and $R$ with $B=C Q+R  C$. Define $D=\smatrix{x & y \\ x+y & x}$; then $CD=DC=0$. If we denote by $\tr$ the trace of a matrix, then we have
\[ \tr(B D)=\tr(C Q D)+\tr(R C  D)
 =\tr(Q  D C)+\tr(R C  D)=0 \]
(note that these computations take place in a commutative ring). But
\[ \tr(B D)=\tr \begin{pmatrix} x^2y & \ast \\ \ast & 0 \end{pmatrix} = x^2y\neq 0, \]
a contradiction.\qed

In order to construct a module which is not a direct summand of a realisable one, it is often enough to consider 'ordinary' Massey products, i.e.~the case of $1\times 1$-matrices; this is true for example in the cases $G=\mathbb{Z}/2\mathbb{Z}\times\mathbb{Z}/2\mathbb{Z}$ (\cite{bks}, Example 7.7) and $G=\mathbb{Z}/3\mathbb{Z}$ (characteristic $3$, \cite{bks}, Example 7.6). In our present case, it is not that easy:
\begin{lemma} Let $k=\mathbb{F}_2$ be the field with $2$ elements. 
For all $a,b,c\in\Tate_L^*(k,k)$ satisfying $ab=0$ and $bc=0$ we have $0\in\left<a,b,c\right>$.
\end{lemma}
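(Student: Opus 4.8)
The plan is to recognise $m(a,b,c)$ as a representative of the triple Massey product $\langle a,b,c\rangle$ and then to read off its vanishing from Theorem~\ref{quatemaintheorem}. So first I would check that whenever $ab=0$ and $bc=0$ one has $m(a,b,c)\in\langle a,b,c\rangle$. Take $f_1(a),f_1(b),f_1(c)\in\Hom_L^*(\hat P,\hat P)$ as cocycle representatives of $a,b,c$. Since $f_1$ and $f_2$ are $k$-linear, $f_1(ab)=f_1(0)=0$ and $f_1(bc)=0$, so the defining relation for $f_2$ reads $df_2(a,b)=f_1(a)f_1(b)$ and $df_2(b,c)=f_1(b)f_1(c)$ (we work throughout in characteristic $2$). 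Hence $f_2(a,b)$ and $f_2(b,c)$ are null-homotopies of $f_1(a)f_1(b)$ and $f_1(b)f_1(c)$, so $\bigl(f_1(a),f_1(b),f_1(c);\,f_2(a,b),\,f_2(b,c)\bigr)$ is a defining system for $\langle a,b,c\rangle$, and the element it produces — the class of $f_2(a,b)f_1(c)+f_1(a)f_2(b,c)$, the $1\times1$ specialisation of $\bar B=\bar T\bar Y-\bar W[1]\bar U$ — coincides with $m(a,b,c)$ by the construction of~\S\ref{sklasse}, using $f_2(ab,c)=f_2(0,c)=0$ and $f_2(a,bc)=0$. So it remains to prove that $m(a,b,c)=0$ whenever $ab=bc=0$.

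Next I would reduce this to a finite check. One may assume $a,b,c$ homogeneous and nonzero, the remaining cases being trivial. As $s$ is a central unit represented by $\bar s$, one has $\langle as^i,bs^j,cs^l\rangle=s^{\,i+j+l}\langle a,b,c\rangle$, so whether $0$ lies in the Massey product is unaffected by stripping powers of $s$ from the entries; hence one may assume $a,b,c$ lie in the set $S=\{x,\,y,\,x+y,\,x^2,\,y^2,\,xy,\,x^2y\}$ of reduced nonzero homogeneous classes in degrees $1,2,3$ — the degree-$0$ class $1$ is excluded because $ab=0$ forces $a$ and $b$, and then $bc=0$ forces $c$, to have positive degree. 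Now $m$ is $k$-trilinear, and by Theorem~\ref{quatemaintheorem} the only nonzero values of $m$ on triples of basis elements from $\{1,x,y,x^2,y^2,x^2y\}$ (i.e.\ not involving powers of $s$) are $m(x,y,x)=x^2$ and $m(y,x,y)=y^2$. Expanding $a,b,c\in S$ in that basis, $m(a,b,c)$ can be nonzero only if the expansion of $b$ involves $y$ and those of $a,c$ involve $x$, or vice versa with $x,y$ interchanged; but an element of $S$ whose expansion involves $x$ or $y$ lies in degree $1$, so this would force $a,b,c\in\{x,y,x+y\}$, whence $ab\in\{x^2,\,y^2,\,x^2+y^2\}$ is nonzero, contradicting $ab=0$. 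Therefore $m(a,b,c)=0$ and $0\in\langle a,b,c\rangle$.

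The step I expect to be the main obstacle is the first one — identifying $m(a,b,c)$ with a Massey product representative — which is precisely where the bar-resolution description of $m$ from~\S\ref{sklasse} and the linearity of $f_1$ and $f_2$ are needed; everything afterwards is forced by the computation of $m$. It is worth stressing that the last argument genuinely uses $k=\mathbb{F}_2$: already over $\mathbb{F}_4$, picking $\omega$ with $\omega^2=\omega+1$, one has $(x+\omega y)(x+\omega^2 y)=x^2+xy+y^2=0$, so a triple Massey product of degree-$1$ classes becomes defined and both the argument and the statement fail.
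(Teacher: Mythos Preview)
Your proof is correct and follows the same strategy as the paper: recognise $m(a,b,c)$ as a Massey-product representative (the paper simply cites \cite{bks}, Lemma~5.14, for this step) and then read off from Theorem~\ref{quatemaintheorem} that it vanishes on the relevant triples. Your reduction is marginally cleaner --- by stripping all powers of $s$ at the Massey-product level using that $s$ is a central unit, you only need $m(a,b,c)=0$ for $a,b,c$ in degrees $1$--$3$; the paper instead reduces via the $2$-periodicity $m(as^{2h},bs^i,cs^j)=s^{2h+i+j}m(a,b,c)$, which leaves a residual case $a\in\mathcal{B}s$ where $m(sa,b,c)=a\,m(s,b,c)$ is only shown to lie in the indeterminacy rather than to vanish outright.
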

\beweis
By \cite{bks}, Lemma~5.14, the class  $m(a,b,c)$ is contained in the Massey product $\left<a,b,c\right>$. Therefore, it is enough to show that $m(a,b,c)$ is an element of the indeterminacy 
\[ a\cdot\Tate_L^{|b|+|c|-1}(k,k)+\Tate_L^{|a|+|b|-1}(k,k)\cdot c \]
for all $a,b,c$. By construction of $m$ it is enough to do so for those triples $(a,b,c)$ and $(sa,b,c)$ with $a,b,c\in\left\{1,x,y,x+y,x^2,y^2,x^2+y^2,x^2y\right\}$ which satisfy $ab=0$ and $bc=0$.

We have that $m(a,b,c)=0$: If $|a|,|b|\leq 1$, then $ab=0$ implies $a=0$ or $b=0$ (here we use that $k=\mathbb{F}_2$). If $|a|\geq 2$ or $|b|\geq 2$, then we get $m(a,b,c)=0$ from Theorem~\ref{quatemaintheorem}.

For $m(sa,b,c)$ we have by \eqref{mgleichung}
\[
 m(sa,b,c)=a\, m(s,b,c)+m(a,b,c)s.
\]
We have already seen that the second summand vanishes; the first summand is contained in
\begin{align*}
 a\cdot \Tate_L^{|s|+|b|+|c|-1}(k,k)=sa\cdot \Tate_L^{|b|+|c|-1}.\tag*{\qed}
\end{align*}

\bemerkung Note that the Proposition is not true for arbitrary fields of characteristic $2$: If the field $k$ contains an element $\alpha\in k$ satisfying $\alpha^2+\alpha+1=0$, then the Massey product
\[ \left< \alpha x+y,\alpha^2 x+y,\alpha x+y \right> \]
is defined and does not contain $0$.
 
\section{The case {$G=\left(\mathbb{Z}/2\mathbb{Z}\right)^r$}} \label{sfallzr}

Let us consider the case of (finite) abelian $p$-groups, which are of the form
\[ G=\prod_{i=1}^r \mathbb{Z}/p^{p_i}\mathbb{Z} \]
with $p_1,\dots,p_r\geq 1$. Note that, given groups $H_1,H_2$, there seems to be no obvious relation between the canonical classes $\gamma_{H_1}$, $\gamma_{H_2}$ and $\gamma_{H_1\times H_2}$. For example, the class $\gamma_{\mathbb{Z}/2\mathbb{Z}}$ is trivial, but (as noted in \cite{bks}, Example~7.7) $\gamma_{\mathbb{Z}/2\mathbb{Z} \times \mathbb{Z}/2\mathbb{Z}}$ is non-trivial. Our main result is the following.

\begin{satz} \label{rsatzallgemein}
Let $k$ be a field of characteristic $p>0$, and $G$ be the abelian $p$-group
\[ G=\prod_{i=1}^r \mathbb{Z}/m_i\mathbb{Z} \qquad \text{with $m_i=p^{p_i},\; p_i\geq 1$ for all $i=1,2,\dots,r$}. \]
\begin{itemize}
\item[(a)] Suppose $r=1$ or $r\geq 3$, and assume that $p^{p_i}\neq 3$ for all $i$. Then $\gamma_G=0\in \hochschild^{3,-1} \hat{H}^*(G,k)$. In particular, every $\hat{H}^*(G,k)$-module is a direct summand of a realisable module.
\item[(b)] In all other cases, $\gamma_G\neq 0\in \hochschild^{3,-1}\hat{H}^*(G,k)$, and there is an $\hat{H}^*(G,k)$-module which is not a direct summand of a realisable module.
\end{itemize}
\end{satz}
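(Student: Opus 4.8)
The plan is to treat the three regimes $r=1$, $r\geq 3$ (both with $p^{p_i}\neq 3$), and the remaining "exceptional" cases separately, reducing each to computations already available.

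For part (a) with $r=1$, the group is a single cyclic $p$-group $C_{p^{p_1}}$ with $p^{p_1}\neq 3$, so $\gamma_G=0$ is exactly Theorem~\ref{zyklsatz}. For part (a) with $r\geq 3$, the idea is to exploit a Künneth-type splitting of $\hat{H}^*(G,k)$ together with a periodicity/"shift invertibility" phenomenon. Write $G=G'\times C_{m_r}$ and build a complete projective resolution of $k$ over $kG$ as the total complex of the tensor product of such resolutions over $kG'$ and $kC_{m_r}$; the endomorphism dg-algebra $A$ then carries the tensor-product structure, and $\hat{H}^*(G,k)$ is the corresponding graded tensor product of the Tate cohomology rings of the factors. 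The key structural point is that, because each factor has an invertible periodicity element (the class $s$ of degree $|s|$, or $y^{\pm1}$ in the cyclic case), the ring $\hat{H}^*(G,k)$ has invertible elements in positive degree and in fact is built from an ordinary (non-Tate) cohomology ring by inverting a regular element. I would then argue that after choosing the cycle-selection map $f_1$ multiplicatively on a suitable generating set and a compatible $f_2$ (exactly as in the cyclic computation and in the $Q_8$ computation of Section~\ref{secquagruppe}), the defining cocycle $m$ of \eqref{mkonstruktion} is a coboundary. Concretely: one chooses $f_1$ and $f_2$ on the "$r$-fold tensor" generators so that $f_2$ vanishes whenever one of its arguments comes from a proper sub-tensor-factor, and the only potentially nonzero contributions to $m(a,b,c)$ involve products of three positive-degree generators from distinct cyclic factors; since $r\geq 3$ there is "enough room", and the relevant null-homotopies (the analogues of $\bar q$, $\bar r$ in Section~\ref{basehomotopies}, or of the homotopy "multiplication by $z^{n-4}$" in the proof of Theorem~\ref{zyklsatz}) can be chosen to make each such term null-homotopic. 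Hence $m=\delta g$ for an explicit $(2,-1)$-cochain $g$, so $\gamma_G=0$; the last sentence of (a) is then immediate from the equivalence (i)$\Leftrightarrow$(ii) of Theorem~\ref{bkstheorem} applied to an arbitrary $X$.

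For part (b), the "exceptional" cases are: (i) $r=2$; (ii) some $p^{p_i}=3$; and in particular, from the abstract, $p=2$ and $G=Q_8$ is handled as a separate (non-abelian) example. The case $G=Q_8$ is Theorem~\ref{haupttheorem}, already proved in Section~\ref{secquagruppe}. If some $p^{p_i}=3$ and $r=1$, this is the non-triviality of $\gamma_{C_3}$, which Theorem~\ref{zyklsatz} defers to Theorem~\ref{nichtrealisierbar3} (in a later section). The genuinely new work is $r=2$: here I would produce an explicit obstruction in the style of the $Q_8$ argument. That is, using a complete projective resolution of $k$ over $k[\mathbb{Z}/m_1\times\mathbb{Z}/m_2]$ built as a tensor product of the cyclic resolutions, I would write down explicit chain maps representing the degree-one generators $x_1,x_2$ (coming from the two factors), compute the required low-degree products and choose base homotopies for the relations among them, assemble $f_1$ and $f_2$ with $\class\circ f_2=0$ as in Proposition~\ref{vereinfachlemma}, and evaluate $m(a,b,c)=\class(f_1(a)f_2(b,c))$ on basis triples. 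I then expect $m$ to be nonzero in cohomology, and I would prove this either directly (assume $m=\delta g$, write out the cocycle identity on a handful of triples of degree-one classes, and derive a contradiction, exactly as in the proof that $\gamma_Q$ is non-trivial) or, more efficiently, by exhibiting a non-vanishing matric Massey product $\langle C,C,C\rangle\not\ni 0$ for a suitable $2\times 2$ matrix $C$ over $\hat{H}^*(G,k)$ and invoking Proposition~\ref{cokernlemma} to get simultaneously the non-realisable module; the $Q_8$ computation with $C=\smatrix{y & x+y\\ x & y}$ is the template, and a trace argument against a companion matrix $D$ with $CD=DC=0$ should again do the job.

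The main obstacle is the $r\geq 3$ vanishing in part (a): one must show that \emph{all} of the (infinitely many, but shift-periodic) values $m(a,b,c)$ are coboundaries, uniformly in the choices, and the bookkeeping for an $r$-fold tensor product of truncated polynomial algebras is considerably heavier than the single-factor case. The crucial simplification I would rely on is that the periodicity elements are invertible, so it suffices to check $m$ on a finite generating set of triples modulo the shift action, and that the "extra room" provided by $r\geq 3$ means every non-trivial triple of positive-degree generators can be split across the factors so that the relevant homotopy can be built as a tensor product of a null-homotopy in one factor with identities in the others — reducing the statement, in the end, to the vanishing of the analogous class for products of two cyclic factors tensored with an extra invertible class, which is where the hypothesis $p^{p_i}\neq 3$ enters.
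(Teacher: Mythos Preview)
Your proposal has a structural misconception that undermines the $r\geq 3$ argument in part~(a): you assert that $\hat{H}^*(G,k)$ ``has invertible elements in positive degree and in fact is built from an ordinary (non-Tate) cohomology ring by inverting a regular element.'' This is true for $r=1$ but \emph{false} for $r\geq 2$. For a product of cyclic $p$-groups with $r\geq 2$, the negative-degree part of Tate cohomology is generated by classes $\varphi_\alpha$ (one for each multi-index $\alpha\in\mathbb{Z}_{\geq 0}^r$) satisfying $\varphi_\alpha\varphi_\beta=0$; in particular all products landing in degree~$0$ from a positive-degree class times a negative-degree class vanish, so there is no invertible element of positive degree. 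Your reduction ``it suffices to check $m$ on a finite generating set of triples modulo the shift action'' therefore does not go through: there is no shift action. The paper's route is entirely different. It shows $m\equiv 0$ (not merely a coboundary) by proving that $f_2$ can be chosen so that every $f_2(x,y)$ is an $I$-map, where $I=\ker\epsilon$ is the augmentation ideal; then each term of \eqref{mkonstruktion} is an $I$-map and hence has class zero (Proposition~\ref{ilemma}). The heart of the argument is a \emph{lifting property} of the tensor-product resolution (Theorem~\ref{flemma2}): any cocycle of non-positive degree which is a $J_2$-map admits a null-homotopy which is an $I$-map. The hypothesis $r\geq 3$ enters because the explicit commutation homotopy between generators from distinct factors (see \eqref{homotopieH}) is a $J_{r-2}$-map, hence an $I$-map only when $r\geq 3$; and the hypothesis $m_i\neq 3$ enters because the null-homotopy $\bar q_i$ for $\bar x_i^2$ is multiplication by $z_i^{m_i-3}$, which lies in $I$ only when $m_i\geq 4$.

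For part~(b) your sketch for $r=2$ also misses the essential ingredient. You propose to work with the positive-degree generators $x_1,x_2$ alone, in the style of the $Q_8$ computation. But for $r=2$ with $m_1,m_2\neq 3$ the positive-degree ring is (graded-)commutative polynomial/exterior, and the obvious candidates such as $\langle u_i,u_i,u_i\rangle$ contain $0$ (exactly because the homotopy $\bar q_i\bar x_i+\bar x_i\bar q_i$ is null-homotopic when $m_i\geq 4$). The paper instead exhibits an ordinary Massey product involving a \emph{negative}-degree class, namely $\langle v_2,\varphi_{(0,1)},v_1\rangle=u_1$ with zero indeterminacy (Theorem~\ref{nichtrealisierbar}); the class $\varphi_{(0,1)}\in\hat H^{-2}$ is precisely what encodes the Tate (as opposed to ordinary) structure. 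The case where some $m_i=3$ is handled for all $r$ by the triple product $\langle u_1,u_1,u_1\rangle$, whose value is $v_1\notin u_1\hat H^1$ (Theorem~\ref{nichtrealisierbar3}); this is the only place where your positive-degree strategy would succeed, and indeed it is the same computation as in Theorem~\ref{zyklsatz}. (The $Q_8$ case you mention is not relevant here, since the statement concerns abelian $p$-groups.)
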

We will prove this in §~\ref{egeneralgroups}. The proof of (a) is rather complicated; for this reason we restrict ourselves to an easier special case in this section. 

\begin{satz} \label{rsatz} Let $k$ be a field of characteristic $2$ and $G=\left(\mathbb{Z}/2\mathbb{Z}\right)^r$ with $r\geq 3$. Then $\gamma_G=0$.
\end{satz}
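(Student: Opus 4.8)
The plan is to realise the Tate cohomology ring $\hat{H}^*(G,k)$ for $G=(\mathbb{Z}/2\mathbb{Z})^r$ as the cohomology of a small, explicitly constructible differential graded algebra, and then to exhibit a strictly associative (or at least $A_\infty$-coherent up to the relevant degree) multiplicative structure, which by the construction in \S\ref{kanonischel} forces $\gamma_G=0$. Concretely, for a single factor $\mathbb{Z}/2\mathbb{Z}$ over a field of characteristic $2$ we have $\hat{H}^*(\mathbb{Z}/2\mathbb{Z},k)=k[x^{\pm 1}]$ (Theorem~\ref{zyklsatz}), and the endomorphism dga of the $1$-periodic complete resolution already \emph{is} the Laurent polynomial ring with zero differential, so $\gamma_{\mathbb{Z}/2\mathbb{Z}}=0$ with $f_2=0$. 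The first step is therefore to understand $\hat{H}^*(G,k)$ for the product: in positive (co)homological degrees it is $k[x_1,\dots,x_r]$ localised appropriately, but the Tate construction glues this to the ``dual'' negative part, and the product is the familiar one coming from the (ordinary, i.e. non-Tate) cohomology ring $H^*(G,k)=k[x_1,\dots,x_r]$ together with a nondegenerate pairing in each degree into the ``bottom'' class. I would first pin down this ring precisely, using that $G=(\mathbb{Z}/2\mathbb{Z})^{r}$ so $kG=k[t_1,\dots,t_r]/(t_i^2)$ and the Koszul-type complex gives a very concrete complete resolution.

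Next I would build the model dga. The key point is that $H^*(G,k)=k[x_1,\dots,x_r]$ is a \emph{polynomial} ring, hence formal: it is the cohomology of $kG$ (viewed as a dga via the bar construction, or equivalently the Koszul dual) with \emph{no higher products}. In characteristic $2$ with $r\ge 1$ the relevant $A_\infty$-structure on $\operatorname{Ext}_{kG}(k,k)=k[x_1,\dots,x_r]$ is intrinsically formal, essentially because the Koszul complex $\Lambda(e_1,\dots,e_r)\otimes kG \to k$ is a genuine dga resolution and its endomorphism dga is quasi-isomorphic to one with zero differential and associative product. The step from ordinary cohomology to Tate cohomology is then handled by inverting the top class: one splices the Koszul resolution with its dual, and because everything is finitely generated and the pairing is perfect, the resulting complete resolution's endomorphism dga still admits a strictly multiplicative cycle-selection map $f_1$, at least after one checks that the products of basis cocycles are again the chosen cocycles (not merely homotopic to them). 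Whenever that holds one may take $f_2=0$ and conclude $m=0$, hence $\gamma_G=0$, exactly as in the $n=2$ case of Theorem~\ref{zyklsatz}.

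The honest obstacle is that ``formality of $k[x_1,\dots,x_r]$'' does not literally say $f_2$ can be taken to be $0$ on the nose for \emph{this} particular resolution; it says there is \emph{some} quasi-isomorphic dga for which this works, and one must transport the conclusion. So the real work is either (i) to construct an explicit complete resolution $\hat{P}_*$ whose endomorphism dga $A$ has the property that the obvious cocycles representing the monomial basis of $\hat{H}^*(G,k)$ are closed under composition up to \emph{nothing} (giving $f_1$ multiplicative, $f_2=0$), or (ii) to argue abstractly that $\gamma_A$ depends only on the quasi-isomorphism type of $A$ — which is stated in \S\ref{kanonischel} (``this class is independent of the choices made'') and is really a statement about $A_\infty$-quasi-isomorphism invariance — and then replace $A$ by the formal model. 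I expect route (ii) combined with the formality of the polynomial ring to be the cleanest: reduce to showing $\gamma_A=0$ when $A$ has zero differential and is the polynomial ring, which is immediate since then $f_1=\id$ works and $f_2=0$. The subtle point to get right, and the reason $r\ge 3$ (or the exclusion of the rank-$2$ case) enters, is that formality/intrinsic formality of $\operatorname{Ext}_{kG}(k,k)$ as an $A_\infty$-algebra can fail in low rank: for $r=2$ there is a genuine triple Massey product (this is exactly the non-triviality of $\gamma_{(\mathbb{Z}/2\mathbb{Z})^2}$ recalled in the introduction), so the argument must use $r\ge 3$ in an essential way — presumably by showing that every potential obstruction class $m(a,b,c)$ lands in a degree where it is forced to vanish, or by producing enough room (extra variables) to kill the homotopies, in the same spirit as the ``$n\ge 4$'' step that made $\gamma_{C_n}$ vanish in Theorem~\ref{zyklsatz}.
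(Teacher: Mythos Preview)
Your proposal has a genuine gap: you never actually prove the formality (or even the vanishing of $m_3$) that you need, and the reduction you sketch does not go through as stated.

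First, a misconception. For $r\ge 2$ the Tate ring $\hat{H}^*(G,k)$ is \emph{not} obtained from the polynomial ring $k[x_1,\dots,x_r]$ by ``inverting the top class''. In negative degrees one has classes $\varphi_\beta$ (indexed by multi-indices $\beta$) with $\varphi_\alpha\varphi_\beta=0$ and $u_i\varphi_\beta=\varphi_{\beta-\varepsilon^i}$ or $0$. This ring is neither a localisation of a polynomial ring nor intrinsically formal in general (for $r=2$ it has a nontrivial triple Massey product). So your route (ii), ``replace $A$ by the polynomial ring with zero differential'', is simply not available: the cohomology algebra is not that ring. Any formality argument must handle the negative-degree part and must explain why it works for $r\ge 3$ but fails for $r=2$; you acknowledge this but do not supply it. Your route (i), finding a resolution whose basis cocycles multiply on the nose, cannot work either: already $\bar u_i\bar u_j\neq \bar u_j\bar u_i$ as chain maps, so $f_2$ cannot be zero.

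The paper's proof is completely different and concrete. It does not attempt formality. Instead it introduces the notion of an $I$-map (a chain map whose matrix entries lie in the augmentation ideal $I=\langle z_1,\dots,z_r\rangle$) and proves two things: (a) if every $f_2(a,b)$ can be chosen to be an $I$-map, then every term in the cocycle \eqref{mkonstruktion} is an $I$-map, hence its class $m(a,b,c)$ is zero; and (b) for $r\ge 3$ such a choice of $f_2$ exists. Step (b) is where $r\ge 3$ enters, and precisely: the explicit null-homotopy of $\bar u_i\bar u_j+\bar u_j\bar u_i$ has unique nonzero entry $z_1\cdots\widehat{z_i}\cdots\widehat{z_j}\cdots z_r$, which lies in $I$ only when $r\ge 3$; and the cocycles $\bar\varphi_\beta$ are $I^2$-maps in their ``exterior'' range only when $r\ge 3$ (the relevant entry being $\mathtt{N}_1=z_2\cdots z_r$). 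For the negative part the paper proves a lifting property (Theorem~\ref{fsatz}): any $I^2$-map cocycle of non-positive degree admits an $I$-map null-homotopy. This handles $f_2(\varphi_\alpha,\varphi_\beta)$ and $f_2(\varphi_\beta,u^\alpha)$ uniformly without writing the homotopies down. Your intuition that $r\ge 3$ provides ``enough room to kill the homotopies'' is in spirit correct, but the mechanism is this $I$-map bookkeeping, not an abstract formality statement.
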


\subsection{Projective resolution and cocycles} \label{sprojauflukozykel}
As a first step in the proof of Theorem~\ref{rsatz} , we will construct a complete projective resolution and corresponding cocycles. Let $r\geq 2$. We need some more new notation. In what follows, we will denote multi-indices with $r$ entries (i.e.,~elements of $\mathbb{Z}^r$) by Greek letters. For such an $\alpha$ let $\alpha_i$ denote its $i$-th entry. For $n\in\mathbb{Z}$ define
\begin{align*}
 M_n&=\left\{ \alpha\in\mathbb{Z}_{\geq 0}^r \mid \sum_{i=1}^r \alpha_i=n \right\}, \\
 N_n&=\left\{ \alpha\in\mathbb{Z}_{<0}^r \mid \sum_{i=1}^r \alpha_i=n-r+1 \right\}.
\end{align*}
For $\alpha\in M_n$ and $\beta\in M_m$ we have $\alpha+\beta\in M_{n+m}$, defined in the obvious way. Similarly $\alpha-\beta$ is an element of $\mathbb{Z}^r$; if 
$\alpha\geq \beta$ (i.e.~$\alpha_i\geq \beta_i$ for all $i$), then $\alpha-\beta\in M_{n-m}$.
For all $i$ we have an element $\varepsilon^i\in M_1$ with 
\[ \varepsilon^i_j = \begin{cases} 1 & \text{if $i=j$} \\0 & \text{otherwise.} \end{cases}  \]
Let us denote by $\svec{0}$ and $\svec{-1}$ the (unique) elements of $M_0$ and $N_{-1}$, respectively. 

Let $k$ be a field of characteristic $2$ and $G=\left(\mathbb{Z}/2\mathbb{Z}\right)^r$ with $r\geq 2$. Then 
\[ kG\cong L=k[z_1,z_2,\dots,z_r]/(z_i^2=0), \]
and as in §~\ref{szyklischegruppen} we will work with $L$ instead of $kG$. 
In $L$, we have the norm element $\norm=z_1z_2\cdots z_r$ and we define $\norm_i=z_1z_2\cdots\widehat{z_i}\cdots z_r$ for all $i=1,2,\dots,r$. Then,
\begin{align} \label{erechenregel}
 z_j\cdot \norm_i=\begin{cases} \norm&\textrm{if $j=i$} \\ 0&\textrm{if $j\neq i$.} \end{cases}
\end{align}
For every finite set $S$ we write $L^S$ for the free $L$-module with $L$-basis elements $(s)$ where $s$ runs through all elements of $S$. In particular,
\begin{align*}
 L^{M_n}&=\bigoplus_{\alpha\in M_n} L\cdot (\alpha), &
 L^{N_n}&=\bigoplus_{\alpha\in N_n} L\cdot (\alpha).
\end{align*}
In order to simplify notation, we define for $\alpha\in\mathbb{Z}^r$ with $\sum_i \alpha=n$
\begin{align} \label{esimplifynotation}
 (\alpha)=0 \in L^{M_n} \quad \textrm{if $\alpha_i<0$ for some $i$.}
\end{align}
Finally, put
$(\alpha)^*=(\svec{-1}-\alpha)\in L^{N_{-1-n}}$ for $\alpha\in M_n$.

A complete projective resolution of $k$ as a trivial $L$-module can be described as follows: Set $\hat{P}_n=L^{M_n}$ for $n\geq 0$ and $\hat{P}_n=L^{N_n}$ for $n<0$. The differential $\partial$ is defined to be
\begin{align*}
 \hat{P}_{n+1}=L^{M_{n+1}}&\longrightarrow L^{M_n}=\hat{P}_n\;:& (\alpha)&\mapsto \sum_{i=1}^r z_i(\alpha-\varepsilon^i) &&\text{for all $n\geq 0$} \\
 \hat{P}_{n+1}=L^{N_{n+1}}&\longrightarrow L^{N_n}=\hat{P}_n\;:& (\alpha)&\mapsto \sum_{i=1}^r z_i(\alpha-\varepsilon^i) &&\text{for all $n<-1$} \\
 \hat{P}_0=L^{M_0}&\longrightarrow L^{N_{-1}}=\hat{P}_{-1}\;: &(\svec{0})&\mapsto \norm\cdot(\svec{-1}) 
\end{align*}
We get a complex
\[ \dots\longleftarrow L^{N_{-2}}\longleftarrow L^{N_{-1}}\longleftarrow L^{M_0}\longleftarrow
 L^{M_1}\longleftarrow L^{M_2}\longleftarrow \dots\longleftarrow L^{M_n}\longleftarrow L^{M_{n+1}}\longleftarrow\dots \]
which is a resolution of $k$ as a trivial $L$-module, as we will see in §~\ref{allgemeinetheorie} in a more general context. In fact, it is a minimal resolution; after applying $\Hom_L(\_,k)$ we get the sequence
\[ \dots\stackrel{0}{\longrightarrow} k^{N_{-2}}
 \stackrel{0}{\longrightarrow} k^{N_{-1}} \stackrel{0}{\longrightarrow} k^{M_0}
 \stackrel{0}{\longrightarrow} k^{M_1} \stackrel{0}{\longrightarrow} k^{M_2}
 \stackrel{0}{\longrightarrow} \dots \stackrel{0}{\longrightarrow} k^{M_n} 
 \stackrel{0}{\longrightarrow} k^{M_{n+1}} \stackrel{0}{\longrightarrow} \dots \]
Hence
\[ \Tate^n_L(k,k)\cong \begin{cases} k^{M_n} & \textrm{if $n\geq 0$}
       \\ k^{N_n} & \textrm{if $n<0$} \end{cases} \]

Next, we will determine the multiplicative structure by explicit construction of cocycles of the endomorphism algebra of $\hat{P}_*$. We begin with degree $1$ and denote by $\bar{u}_i$ (for every $i=1,2,\dots,r$) the following morphism of chain complexes:
\begin{align*}
 \hat{P}_{n+1}=L^{M_{n+1}}&\longrightarrow L^{M_n}=\hat{P}_n\;:& (\alpha)&\mapsto\; (\alpha-\varepsilon^i) &&\text{for $n\geq 0$} \\
 \hat{P}_{n+1}=L^{N_{n+1}}&\longrightarrow L^{N_n}=\hat{P}_n\;:& (\alpha)&\mapsto\; (\alpha-\varepsilon^i) &&\text{for $n<-1$} \\
 \hat{P}_0=L^{M_0}&\longrightarrow L^{N_{-1}}=\hat{P}_{-1}\;: &(\svec{0})&\mapsto\; \norm_i\cdot (\svec{-1}) 
\end{align*}
To prove $\partial \bar{u}_i=\bar{u}_i\partial$, note that the diagram
\begin{align*}
\xymatrix{
\hat{P}_{n+1}\ar[d]^{\bar{u}_i}\ar[r]^-{\partial} & \hat{P}_{n}\ar[d]^{\bar{u}_i}  \\
\hat{P}_{n}\ar[r]_-{\partial} & \hat{P}_{n-1} }
&&
\xymatrix{
(\alpha) \ar@{|->}[r]\ar@{|->}[d] & \sum_{j=1}^r z_j(\alpha-\varepsilon^j)\ar@{|->}[d] \\
(\alpha-\varepsilon^i)\ar@{|->}[r] & \sum_{j=1}^r z_j(\alpha-\varepsilon^j-\varepsilon^i)
}
\end{align*}
commutes for $n\geq 1$, $\alpha\in M_{n+1}$ and for $n<-1$, $\alpha\in N_{n+1}$. Then we are left with two special cases, namely
\begin{align*}
\xymatrix{
(\varepsilon^m)\ar@{|->}[r]^{\partial}\ar@{|->}[d]^{\bar{u}_i} & z_m(\svec{0})\ar@{|->}[d]^{\bar{u}_i} \\
(\varepsilon^m-\varepsilon^i)\ar@{|->}[r]_-{\partial} & z_m\norm_i(\svec{-1}) }
&&
\xymatrix{
(\svec{0}) \ar@{|->}[r]^{\partial}\ar@{|->}[d]^{\bar{u}_i} & \norm(\svec{-1}) \ar@{|->}[d]^{\bar{u}_i} \\
\norm_i(\svec{-1})\ar@{|->}[r]_-{\partial}  & \norm(\svec{-1}-\varepsilon^i) }
\end{align*}
The left diagram shows the map $\hat{P}_1\longrightarrow \hat{P}_{-1}$. If $m\neq i$, then both compositions are $0$ due to \eqref{esimplifynotation} and \eqref{erechenregel}. If $m=i$ then we get $(\varepsilon^i)\mapsto \norm(\svec{-1})$ in both cases. The diagram on the right commutes because of the equality $\partial(\norm_i(\svec{-1}))=\sum_{j=1}^r z_j \norm_i(\svec{-1}-\varepsilon^j) =\norm(\svec{-1}-\varepsilon^i)$ which follows from \eqref{erechenregel}. We have thereby shown that $\bar{u}_i$ is a chain map; denote the corresponding cohomology class by $u_i$. One readily verifies that
\[ \Ext^*_L(k,k)\cong k[u_1,u_2,\dots,u_r]. \]
Since $u_iu_j=u_ju_i$, we know that $\bar{u}_i\bar{u}_j+\bar{u}_j\bar{u}_i$ is null-homotopic. In fact, it is almost zero: The cocycle $\bar{u}_i\bar{u}_j$ is given by
\begin{align*}
\hat{P}_{n+2}=L^{M_{n+2}}&\longrightarrow L^{M_{n}}=\hat{P}_{n}\;:&
 (\alpha)\mapsto&\; (\alpha-\varepsilon^j-\varepsilon^i) &&\text{for $n\geq 0$} \\
\hat{P}_1=L^{M_1}&\longrightarrow L^{N_{-1}}=\hat{P}_{-1}\;:& 
 (\varepsilon^m)\mapsto&\; \norm_i(\svec{-1}+\varepsilon^m-\varepsilon^j) \\
\hat{P}_0=L^{M_0}&\longrightarrow L^{N_{-2}}=\hat{P}_{-2}\;: &
 (\svec{0})\mapsto&\; \norm_j(\svec{-1}-\varepsilon^i) \\
\hat{P}_{n+2}=L^{N_{n+2}}&\longrightarrow L^{N_n}=\hat{P}_n\;:&
 (\alpha)\mapsto&\; (\alpha-\varepsilon^j-\varepsilon^i)  &&\text{for $n<-2$}
\end{align*}
By interchanging $i$ and $j$ one sees that $\bar{u}_i\bar{u}_j+\bar{u}_j\bar{u}_i$ is zero except in degrees $-1$ and $-2$, where it is given by
\[
\xymatrix{
\dots & \hat{P}_{0}\ar[l]\ar@{-->}[dr] \ar[d]_{\displaystyle (\svec{0})\mapsto \norm_i(\svec{-1}-\varepsilon^j)+\norm_j(\svec{-1}-\varepsilon^i)} &  \hat{P}_1 \ar[l]\ar[d]^{\displaystyle (\alpha)\mapsto \norm_i(\svec{-1}+\alpha-\varepsilon^j)+\norm_j(\svec{-1}+\alpha-\varepsilon^i)} &  \dots\ar[l] \\
\dots & \hat{P}_{-2}\ar[l] & \hat{P}_{-1} \ar[l] & \dots\ar[l] }
\]
As a null-homotopy for this we can take the map which is given by
\begin{equation} \label{xyhomotopy}
 \hat{P}_0 \ni (\svec{0}) \mapsto z_1\dots \widehat{z_i}\dots \widehat{z_j} \dots z_r (\svec{-1})\in \hat{P}_{-1} 
\end{equation}
and vanishes everywhere else.\bigskip

Now we consider the negative range. Take some multi-index $\beta\in\mathbb{Z}_{\geq 0}^r$ and write $m=|\beta|=\sum_i \beta_i$. Let us define a cocycle $\bar{\varphi}_\beta$ of degree $-(m+1)$ as follows:
\begin{align*}
\intertext{In degree $m+1+n$ with $n\geq 0$:}
  \hat{P}_{n}=L^{M_n} &\longrightarrow  L^{M_{m+1+n}}=\hat{P}_{m+1+n} \\
  (n\varepsilon^1) &\mapsto  \norm_1(\beta+(n+1)\varepsilon^1) \\
  (\alpha) &\mapsto 0 \quad\textrm{for all other $\alpha\in M_n$} \\
\intertext{In degree $n=0,1,\dots,m$:}
  \hat{P}_{-(m+1)+n}=L^{N_{-(m+1)+n}}&\longrightarrow L^{M_n}=\hat{P}_{n} \\
  (\svec{-1}-\alpha)=(\alpha)^* &\mapsto (\beta-\alpha) \quad\textrm{for all $\alpha\in M_{m-n}$} \\
\intertext{In degree $-n-1$ with $n\geq 0$:} 
  \hat{P}_{-(m+1)-n-1}=L^{N_{-(m+1)-n-1}}&\longrightarrow L^{N_{-n-1}}=\hat{P}_{-n-1} \\  
  (\beta+(n+1)\varepsilon^1)^*&\mapsto \norm_1(n\varepsilon^1)^* \\
  (\alpha)^* &\mapsto 0 \quad\textrm{for all other $\alpha\in M_{m+n+1}$}   
\end{align*}
Note that the first factor of $\left(\mathbb{Z}/2\mathbb{Z}\right)^r$ plays a special role. This is an arbitrary but, as it seems, unavoidable choice. We omit the straightforward proof of $\bar{\varphi}_\beta$ being a cocycle.

Note that in degree $0$ the map $\bar{\varphi}_\beta:L^{N_{-1-m}}\longrightarrow L$ is the projection onto the factor corresponding to $(\beta)^*$. Using \eqref{tateiso} we see that, for fixed $m$, the cohomology classes $\varphi_\beta$ with $|\beta|=m$ span $\smash{\Tate}^{-m-1}_L(k,k)$ as a $k$-vector space. We will see later that the Tate algebra is given by
\[ \Tate^*_L(k,k)\cong k[u_i,\varphi_\beta]\; / \sim \]
where $i=1,\dots,r$ and $\beta$ runs through all multi-indices; the relations are given by
$\varphi_\alpha\varphi_\beta=0$ for all $\alpha$ and $\beta$, and
\[ u_i\;\varphi_\beta=\begin{cases} 0 & \textrm{if $\beta_i=0$} \\ \varphi_{\beta-\varepsilon^i} & \textrm{otherwise.} \end{cases} \]
To prove these relations and for the computation of $\gamma$ we are supposed to write down several homotopies explicitly. We will circumvent this by proving that our projective resolution has a special lifting property which shows the existence of sufficiently nice homotopies in certain cases.

\subsection{The lifting property of the resolution} \label{sliftungseigen1}
Recall the definition of the map $\class$ of §~\ref{sklasse}. All properties carry over to our present case (because we are dealing with a minimal projective resolution again). 

\begin{definition} Let $J$ be an ideal of $L$. A map $f:P\rightarrow Q$ of $L$-modules is called a \emph{$J$-map}, if the image of $f$ is contained in the submodule $J\cdot Q\subseteq Q$. 
\end{definition}
The composition of a $J$-map with any map of $L$-modules is again a $J$-map. If $P$ and $Q$ are finitely generated free $L$-modules, we can think of $f$ as a matrix with coefficients in $L$. Then $f$ is a $J$-map if and only if all entries of this matrix lie in $J$. 
\begin{definition}
A map of chain complexes $f$ is called a $J$-map if $f_i$ is a $J$-map for all $i\in\mathbb{Z}$.
\end{definition}
The $J$-maps form a two-sided ideal of the endomorphism algebra.

Now let $I=\ker \epsilon=\left<z_1,\dots,z_r\right>_L$ be the augmentation ideal.
\begin{lemma} \label{nulllemma} Let $\bar{f}$ be a cocycle of the endomorphism algebra. Suppose $\bar{f}$ is an $I$-map. Then $\class(\bar{f})=0$. More generally, for every $I$-map $\bar{f}:\hat{P}[n]\longrightarrow \hat{P}$ we have $\class(\bar{f})=0$.
\end{lemma}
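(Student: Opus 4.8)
The plan is to unwind the definition of $\class$ and show directly that an $I$-map composed with the augmentation is zero. Recall that for $\bar{f}\in\Hom^n_L(\hat{P},\hat{P})$ the class $\class(\bar{f})$ is by definition the cohomology class of $\epsilon\circ\bar{f}_0:\hat{P}_n\to k$ in $H^n\Hom_L(\hat{P}_*,k)=\Tate^n_L(k,k)$; since the complex $\Hom_L(\hat{P}_*,k)$ has trivial differential (the resolution is minimal), $\class(\bar{f})$ is zero precisely when the map $\epsilon\circ\bar{f}_0$ itself is the zero map. So the whole statement reduces to checking $\epsilon\circ\bar{f}_0=0$.

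The key observation is that if $\bar{f}$ is an $I$-map, then the image of $\bar{f}_0:\hat{P}_n\to\hat{P}_0$ is contained in $I\cdot\hat{P}_0=I\cdot L=I$, where $I=\ker\epsilon$ is the augmentation ideal. Hence $\epsilon\circ\bar{f}_0$ factors through $I\hookrightarrow L\xrightarrow{\epsilon}k$, and since $\epsilon(I)=0$ by the very definition of $I$, we conclude $\epsilon\circ\bar{f}_0=0$, so $\class(\bar{f})=0$. Note that the cocycle hypothesis is not needed at all for this argument; an arbitrary $I$-map $\hat{P}[n]\to\hat{P}$ already has $\class=0$, which is exactly the ``more generally'' clause. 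The first sentence of the statement is then just the special case in which $\bar{f}$ happens also to be a cocycle.

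There is essentially no obstacle here: the proof is a one-line consequence of the definitions, and the only thing to be careful about is that $\class$ only sees the degree-$0$ component $\bar{f}_0$ followed by $\epsilon$, so being an $I$-map in that single degree is all one uses — the hypothesis that $\bar{f}$ is an $I$-map in \emph{every} degree is more than enough. I would write the proof in two or three lines, stating first that $\im(\bar{f}_0)\subseteq I\cdot\hat{P}_0 = I = \ker\epsilon$, hence $\epsilon\circ\bar{f}_0=0$, hence $\class(\bar{f})=0$; and then remark that this did not use that $\bar{f}$ is a chain map, giving the general statement.
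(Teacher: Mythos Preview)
Your proposal is correct and matches the paper's own proof essentially verbatim: the paper simply observes that $\class(\bar{f})$ is represented by $\epsilon\circ\bar{f}_0$, which vanishes because $\im(\bar{f}_0)\subseteq I=\ker\epsilon$. Your additional remarks about minimality of the resolution and the irrelevance of the cocycle hypothesis are accurate elaborations of what the paper leaves implicit.
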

\beweis
$\class(\bar{f})$ is represented by $\hat{P}_n\xrightarrow{\bar{f}_0}\hat{P}_0\xrightarrow{\hspace{1pt}\epsilon} k,$
which is zero because the image of $\bar{f}_0$ is contained in the kernel of $\epsilon$.\qed
\begin{lemma} \label{ilemma} Suppose that, in the construction of a representative $m$ of $\gamma_G$, we are able to choose the map $f_2$ in such a way that $f_2(x,y)$ is an $I$-map for all $x,y\in \Tate^*_L(k,k)$. Then $m=0$, and hence $\gamma=0$.
\end{lemma}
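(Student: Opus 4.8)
\textit{Proof plan:}\\[2pt]

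The strategy is to run through the same construction of $m$ as in \S\ref{kanonischel} but track everything through the ideal $I$. Recall from \S\ref{sklasse} that, once $f_2$ is simplified so that $\class\circ f_2=0$, the representative $m$ is given by $m(a,b,c)=\class(f_1(a)f_2(b,c))$. So the first task is to check that the hypothesis of the lemma is compatible with the simplification of Proposition~\ref{vereinfachlemma}: starting from an $f_2$ all of whose values are $I$-maps, the modified map $f_2-f_1\circ\class\circ f_2$ still has the $I$-map property, because $\class\circ f_2=0$ already by Lemma~\ref{nulllemma} (every $I$-map has trivial $\class$), so in fact no modification is needed — the given $f_2$ is already simplified. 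Hence $m(a,b,c)=\class(f_1(a)f_2(b,c))$ with $f_2(b,c)$ an $I$-map.

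The main point is then immediate: $f_1(a)\circ f_2(b,c)$ is the composite of the $I$-map $f_2(b,c)$ with an arbitrary $L$-module map $f_1(a)$, and the composition of a $J$-map with any $L$-module map is again a $J$-map (as noted just after the definition of $J$-maps). Therefore $f_1(a)f_2(b,c)$ is an $I$-map $\hat{P}[n]\to\hat{P}$, and by the general form of Lemma~\ref{nulllemma} we get $\class(f_1(a)f_2(b,c))=0$ for all $a,b,c$ in a $k$-basis of $\Tate^*_L(k,k)$. Since $m$ is $k$-linear in each argument, $m=0$ as a Hochschild cochain, so the class $\gamma_G=[m]$ vanishes.

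There is essentially no obstacle in the proof itself; the real content is pushed into the hypothesis, namely the requirement that one can choose homotopies $f_2(x,y)$ that are $I$-maps. That is the part that will need genuine work later: one must exhibit, for each pair of basis classes, an explicit null-homotopy of $f_1(x)f_1(y)-f_1(xy)$ all of whose matrix entries lie in the augmentation ideal $\langle z_1,\dots,z_r\rangle$. The lemma is designed precisely to reduce the whole computation of $\gamma_G$ to that single structural condition, which is why it is worth isolating. In the body of the proof below I would simply write the two or three lines above; the substantive verification of the $I$-map hypothesis for $G=(\mathbb{Z}/2\mathbb{Z})^r$ with $r\ge 3$ is carried out separately, using the lifting property of the resolution established in \S\ref{sliftungseigen1}.
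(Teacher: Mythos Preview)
Your argument is correct and essentially the same as the paper's. The only difference is cosmetic: the paper works directly with the full four-term cocycle
\[
-(-1)^{|a|} f_1(a)f_2(b,c)+f_2(ab,c)-f_2(a,bc)+f_2(a,b)f_1(c),
\]
observes that each summand is an $I$-map because $I$-maps form a two-sided ideal of the endomorphism dga, and then applies Lemma~\ref{nulllemma} to the sum. You instead first note that $\class\circ f_2=0$ (so the simplification of Proposition~\ref{vereinfachlemma} is already in force) and then apply Lemma~\ref{nulllemma} to the single remaining term $f_1(a)f_2(b,c)$. Both routes rest on exactly the same two facts --- the ideal property of $I$-maps and Lemma~\ref{nulllemma} --- so there is no substantive divergence.
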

\beweis
For all homogeneous $a,b,c\in\Tate^*_L(k,k)$ the value of $m(a,b,c)$ is the class of the cocycle
\[ -(-1)^{|a|} f_1(a)f_2(b,c)+f_2(ab,c)-f_2(a,bc)+f_2(a,b)f_1(c). \]
Since the $I$-maps form a two-sided ideal in the endomorphism algebra, this expression is an $I$-map. By Proposition~\ref{nulllemma} its class is zero, so $m(a,b,c)=0$.\qed\\
Our goal will be to choose $f_2$ in such a way that we can apply this proposition. Recall that, for $x,y\in\Tate^*_L(k,k)$, $f_2(x,y)$ has to be a homotopy for $f_1(xy)+f_1(x)f_1(y)$. We could write down all these homotopies and check that they are $I$-maps. This is a lot of work which we avoid (at least in negative degrees) by using the following lifting property of $\hat{P}_*$.
\begin{satz}[Lifting property of the resolution] \label{fsatz}
Let $f$ be a chain transformation of $\hat{P}$ of non-positive degree which is an $I^2$-map. Then there is a null-homotopy $h$ for $f$ which is an $I$-map.
\end{satz}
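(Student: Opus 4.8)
The plan is to build $h$ by the standard comparison-theorem induction on homological degree, but keeping track of the defining ideal at each step. First observe that $f$ is null-homotopic to begin with: being an $I^2$-map it is in particular an $I$-map, so $\class(f)=0$ by Proposition~\ref{nulllemma}, and since $\hat P_*$ is a minimal resolution, $\class(f)$ is the cohomology class of the cocycle $f$; hence $f=dg$ for some $g$. The content of the theorem is that $g$ can be chosen an $I$-map. Solving the homotopy equation $\partial\circ h+h\circ\partial=f$ degree by degree, suppose the relevant components of $h$ have been chosen as $I$-maps so that the equation holds below homological degree $n$. The equation in degree $n$ then has the shape $\partial\circ\chi=\psi$, where $\psi\colon\hat P_{n+d}\to\hat P_n$ is $f_n$ minus an already-constructed piece of the form (previous $h$)$\circ\partial$; a routine diagram chase (using $\partial^2=0$ and that $f$ is a chain map) shows $\psi$ has image in $\ker(\partial_n)=\im(\partial_{n+1})$, and since $f_n$ is an $I^2$-map while $\partial$ and the earlier $h$'s are $I$-maps, $\psi$ is an $I^2$-map. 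So everything reduces to the

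\emph{Lifting Lemma.} If $\psi\colon\hat P_{n+d}\to\hat P_n$ is an $I^2$-map with image in $\im(\partial\colon\hat P_{n+1}\to\hat P_n)$, then $\psi=\partial\circ\chi$ for some $I$-map $\chi$.

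Since the source is free, this is the inclusion $I^2\hat P_n\cap\ker\partial_n\subseteq\partial(I\hat P_{n+1})=I\cdot\ker\partial_n$. Away from the seam (i.e.\ for $n\neq 0,-1$) this follows from Koszulness: $L\cong k[z_1,\dots,z_r]/(z_i^2)$ is a Koszul algebra, hence the minimal resolution is internally linear ($\deg z_i=1$), so $\ker\partial_n=\im\partial_{n+1}$ is generated in the single lowest internal degree; any of its elements lying in $I^2\hat P_n$ then has internal degree at least two units higher, so is an $L_{\geq 1}$-combination of those generators and lies in $I\cdot\ker\partial_n$. At the seam one argues directly from the explicit formulas: $\partial_0$ is multiplication by $\norm$, with $\norm\in I^r$, and $\operatorname{Ann}_L(I)=k\norm$, so that $\ker\partial_{-1}=\norm\hat P_{-1}$; the two seam instances of the lemma then follow from $\norm\in I^2$ (valid since $r\geq 2$) by a short computation.

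The genuine obstacle is the seam at homological degree $-1$. There $\partial_0(I\hat P_0)=I\norm\,\hat P_{-1}=0$, so the Lifting Lemma is vacuous in that degree: an $I$-map $\chi$ with $\partial_0\circ\chi=\psi$ exists only when $\psi$ vanishes outright. Consequently a blind one-directional induction cannot work; one must instead arrange that the intermediate map $\psi$ produced at the seam is zero, exploiting the residual freedom (the freedom to alter an earlier component of $h$ by an arbitrary map into $\ker\partial_{-1}=k\norm$, which is automatically an $I$-map). Conceptually this is an obstruction-vanishing statement: applying $\Hom_L^*(\hat P,-)$ to the exact sequence $0\to I\hat P\to\hat P\to\hat P/I\hat P\to 0$ (note $\hat P/I\hat P$ has zero differential, by minimality), the class $[f]$ lives in $H^{d}\Hom_L^*(\hat P,I\hat P)$ and what has to be shown is that it vanishes there; since $f$ factors through $I^2\hat P\hookrightarrow I\hat P$ and $d$ is non-positive, the relevant part of that cohomology localises near the seam, where the vanishing can be checked by hand. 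I expect this seam analysis to be the bulk of the work; the rest is the bookkeeping of a controlled comparison theorem together with the linear-algebra/Koszulness input for the Lifting Lemma.
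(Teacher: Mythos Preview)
Your framework is right, but the seam fix you propose does not work. Altering $h_{-1}$ by a map $\eta$ landing in $\ker\partial_{-1}=k\norm\cong k$ changes $\psi$ by $\eta\circ\partial$, where the relevant $\partial$ lies in the negative range of the complex and is therefore an $I$-map; since any $L$-linear map to the trivial module $k$ kills $I$, you get $\eta\circ\partial=0$ and $\psi$ is unchanged. More generally, precisely because the resolution is minimal, every induced map on $\Hom_L(\hat P_*,k)$ vanishes, so no adjustment of earlier components by maps into socles will move the obstruction. Your cohomological reformulation in terms of $H^d\Hom_L^*(\hat P,I\hat P)$ may be salvageable, but you have not actually computed that group or shown the class dies; as written this is a gap, not a proof.

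The paper sidesteps the seam entirely by running two inductions outward from $h_0=0$. The key observation is that $\partial_0$ is multiplication by $\norm$ and $\norm\cdot I^2=0$, so $\partial_0\circ f_0=0$. This both starts the upward induction (lift $f_0$ through $\partial_1$) and, via the chain-map identity $f_{-1}\circ\partial=\partial_0\circ f_0=0$, starts a \emph{downward} induction in which one solves $h_k\circ\partial=f_k-\partial\circ h_{k+1}$ by lifting through $\partial^*$ rather than $\partial_*$. So you also need a dual form of the Lifting Lemma: if $g:\hat P_m\to\hat P_n$ is an $I^2$-map with $g\circ\partial=0$ and $m<0$, then $g=h\circ\partial$ for some $I$-map $h$. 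Both forms admit a proof much shorter than your Koszulness argument: take any lift $f$ by projectivity, write $f=h+X$ with $h$ the $I$-part and $X$ having matrix entries in $k\subset L$; then $\partial\circ X=g-\partial\circ h$ is an $I^2$-map whose entries also lie in the $k$-span $V$ of $1,z_1,\dots,z_r$, and $V\cap I^2=0$ forces $\partial\circ X=0$, so $h$ is already the desired $I$-lift.
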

The proof is based on the following
\begin{lemma} \label{llemma}
Let $m,n$ be integers.
\begin{itemize}
\item[(i)] Let $g:\hat{P}_m\longrightarrow \hat{P}_n$ be an $I^2$-map satisfying $\partial\circ g=0$. If $n\geq 0$, then there is an $I$-map $h:\hat{P}_m\longrightarrow \hat{P}_{n+1}$, such that $g=\partial\circ h$.
\item[(ii)] Let $g:\hat{P}_m\longrightarrow \hat{P}_n$ be an $I^2$-map satisfying $g\circ \partial=0$. If $m<0$, then there is an $I$-map $h:\hat{P}_{m-1}\longrightarrow \hat{P}_n$, such that $g=h\circ\partial$.
\end{itemize}
\begin{align*} \xymatrix{
 & \hat{P}_{m}\ar[d]^{g}\ar@{-->}[dr]^{h} \\
 \hat{P}_{n-1} & \hat{P}_n\ar[l]^-\partial & \hat{P}_{n+1}\ar[l]^-\partial }  
&&
 \xymatrix{
 \hat{P}_{m-1}\ar@{-->}[dr]_h & \hat{P}_{m}\ar[l]^-\partial\ar[d]_g & \hat{P}_{m+1}\ar[l]^\partial \\
  & \hat{P}_n }
\end{align*}
\end{lemma}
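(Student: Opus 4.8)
The statement to prove is Lemma~\ref{llemma}, which is the technical engine behind the lifting property (Proposition~\ref{fsatz}). The plan is to treat the two parts symmetrically — part (i) lifts a cocycle-like $I^2$-map along $\partial$ in the non-negative range, part (ii) does the dual in the strictly-negative range — and in each case to work one module at a time, choosing a preimage for each free generator by hand. Both are essentially local computations in the explicit resolution $\hat P_* = L^{M_*}$ (resp.\ $L^{N_*}$) with the combinatorial differential $(\alpha)\mapsto\sum_i z_i(\alpha-\varepsilon^i)$, so the whole thing reduces to linear algebra over $L=k[z_1,\dots,z_r]/(z_i^2)$.

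\textbf{Part (i).} Suppose $g:\hat P_m\to\hat P_n$ with $n\ge0$ is an $I^2$-map and $\partial g=0$. Since $\hat P_m$ is free, it suffices to define $h$ on each basis element $(\alpha)$ (for $\alpha\in M_m$ if $m\ge0$, or $\alpha\in N_m$ if $m<0$) and to check $\partial h(\alpha)=g(\alpha)$ and that $h(\alpha)\in I\cdot\hat P_{n+1}$. Write $w=g(\alpha)\in\hat P_n=L^{M_n}$; by hypothesis every coefficient of $w$ lies in $I^2$. The key observation is that $\partial:\hat P_{n+1}\to\hat P_n$ restricted to $I\cdot\hat P_{n+1}$ surjects onto $I^2\cdot\hat P_n$, essentially because $\partial$ is ``multiplication by the $z_i$'s'' and the relevant piece of the minimal resolution is acyclic away from the bottom. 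Concretely, I would exhibit, for each monomial $z_S\,(\gamma)$ with $|S|\ge2$ appearing in $w$ (here $z_S=\prod_{i\in S}z_i$ and $\gamma\in M_n$), an explicit preimage of the form $z_{S\setminus\{i_0\}}(\gamma+\varepsilon^{i_0})$ for a suitable chosen index $i_0\in S$, and sum these up; the constraint $|S|\ge2$ (i.e.\ the $I^2$-hypothesis) guarantees the preimage itself is an $I$-map. The condition $\partial g=0$ together with exactness of the minimal resolution of $k$ in the relevant degree guarantees that the naive choices are consistent, i.e.\ that the assembled $h(\alpha)$ actually maps to $w$ under $\partial$; any discrepancy is a cycle in the acyclic range and can be corrected, again by an $I$-map because we stay inside $I\cdot\hat P_{n+1}$. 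One has to be slightly careful at the transitional degrees around $0$ and $-1$ where the shape of $\hat P$ changes and $\norm$ enters, but the hypothesis $n\ge0$ keeps us on the ``polynomial'' side of the resolution where these subtleties are mild.

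\textbf{Part (ii)} is the mirror image: given $g:\hat P_m\to\hat P_n$ with $m<0$, an $I^2$-map with $g\partial=0$, one wants $h:\hat P_{m-1}\to\hat P_n$ with $g=h\partial$. Here I would dualize the argument using the $*$-duality $(\alpha)^*=(\svec{-1}-\alpha)$ and the fact that in the negative range $\hat P_m=L^{N_m}$ with the \emph{same} combinatorial differential; the condition $g\partial=0$ says $g$ kills the image of $\partial$, so $g$ factors through the cokernel of $\partial:\hat P_m\to\hat P_{m-1}$-direction, which by exactness (again for $m<0$ we are safely inside the acyclic part of the complete resolution) is exactly the quotient computed by the next differential. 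Choosing splittings generator-by-generator and keeping track that every choice lands in $I\cdot(\text{something})$, using once more that $g$ is an $I^2$-map, produces the required $I$-map $h$.

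\textbf{Main obstacle.} The genuinely fiddly point is not the existence of \emph{some} null-homotopy — that is immediate from projectivity and exactness — but the bookkeeping that forces the homotopy to be an $I$-map rather than merely an $L$-map. This comes down to verifying that the explicit generator-wise preimages one writes down never acquire a unit coefficient, which is exactly where the $I^2$-hypothesis on $g$ is consumed: each factor of $I$ in a coefficient of $g$ ``pays for'' one factor of $z_i$ stripped off when inverting $\partial$, leaving at least one factor of $I$ in $h$. I expect the bulk of the work to be organizing this clean degree-by-degree, handling the boundary cases at degrees $-1,0$ where $\norm$ and $\norm_i$ appear, and confirming that the corrections needed to patch local choices into a global homotopy also remain $I$-maps — which they do, since the ambiguity is a cycle living in $I\cdot\hat P_{n+1}$ (resp.\ an image computation inside $I\cdot\hat P_n$), but this requires spelling out.
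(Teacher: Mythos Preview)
Your diagnosis of the difficulty is exactly right: existence of \emph{some} lift is free from projectivity and exactness, and the only content is arranging that the lift be an $I$-map. But the mechanism you propose for achieving this has a gap.

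Your plan is to write down, for each monomial $z_S(\gamma)$ with $|S|\ge 2$ occurring in $g(\alpha)$, an explicit candidate preimage $z_{S\setminus\{i_0\}}(\gamma+\varepsilon^{i_0})$, and then to ``correct'' the discrepancy. The trouble is that $\partial$ applied to your candidate produces not only $z_S(\gamma)$ but also cross-terms $z_{(S\setminus\{i_0\})\cup\{j\}}(\gamma+\varepsilon^{i_0}-\varepsilon^j)$ for $j\notin S$, and these have the \emph{same} monomial length $|S|$. So the error $e=g-\partial h_0$ is again an $I^2$-map with $\partial e=0$, and you are back to exactly the original problem with no visible induction parameter decreasing. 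Your sentence ``can be corrected, again by an $I$-map because we stay inside $I\cdot\hat P_{n+1}$'' is precisely the assertion you are trying to prove; as written the argument is circular.

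The paper sidesteps the whole issue with a one-line trick. Take \emph{any} lift $f$ with $\partial f=g$ (which exists by projectivity), and use the $k$-vector-space splitting $L=k\oplus I$ entrywise to write $f=h+X$, where $h$ has all matrix entries in $I$ and $X$ has all entries in $k$. Then $\partial X=g-\partial h$; since $g$ is an $I^2$-map and $\partial,h$ are both $I$-maps, $\partial X$ is an $I^2$-map. On the other hand the entries of $\partial$ lie in the span $V$ of $1,z_1,\dots,z_r$ and those of $X$ lie in $k$, so every entry of $\partial X$ lies in $V$. But $V\cap I^2=0$, hence $\partial X=0$ and $h$ is already the desired $I$-lift. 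Part (ii) is handled by the symmetric argument. No recursion, no corrections, no boundary-degree casework: the $I^2$ hypothesis is consumed in a single intersection $V\cap I^2=0$.
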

\begin{proof}
Let us prove (i). Since $\hat{P}_n$ is projective and $\partial\circ g=0$, there is some $f:\hat{P}_m\longrightarrow \hat{P}_{n+1}$ satisfying $g=\partial\circ f$. Consider $f$ as a matrix with entries in $L$. Every element $z$ in $L$ can uniquely be written as $z=t+x$, where $t\in I$ and $x\in k\subset L$. Doing so for every coefficient of $f$, we get $f=h+X$ for some $I$-map $h$ and some matrix $X$ with coefficients in $k\subset L$. We claim that $\partial\circ X=0$ which implies that $h$ is a suitable lifting.

Since $h$ and $\partial$ are both $I$-maps, $\partial\circ h$ is an $I^2$-map. Thus,
\[ \partial\circ X=g-\partial\circ h \]
is an $I^2$-map. Let $V\subset L$ be the $k$-vector space spanned by $1,z_1,z_2,\dots,z_r$. All coefficients of $\partial$ lie in $V$. Since all coefficients of $X$ lie in $k$, we get that all coefficients of $\partial\circ X$ lie in $V$. From $V\cap I^2=\{0\}$ we get $\partial\circ X=0$. The proof of (ii) is similar. 
\end{proof}

\begin{proof}[Proof of Theorem~\ref{fsatz}]
Let $f:\hat{P}[n]\longrightarrow \hat{P}$ be a cocycle of non-positive degree $n$ and, at the same time, an $I^2$-map. We want to construct a null-homotopy $h$ for $f$ which is an $I$-map. 

Put $h_0=0$. We know that $\partial\circ f_0=0$, because $\partial=\partial_0$ is given by multiplication with the norm element $\norm$ and $\norm\cdot I^2=0$. From Proposition~\ref{llemma}.(i) we get some $I$-map $h_1:\hat{P}_n\longrightarrow \hat{P}_1$ satisfying $f_0=\partial\circ h_1$.

Suppose we have constructed $I$-maps $h_0,h_1,h_2,\dots,h_j$ in such a way that  $f_i=\partial h_{i+1}+h_i\partial$ for all $i=0,1,\dots,j-1$. Consider the $I^2$-map
$g=f_j-h_j\circ \partial$.
\[
\xymatrix@R=40pt@C=21pt{
\dots & \hat{P}_{n-1}\ar[l]\ar[d]_{f_{-1}}\ar@{-->}[dr]_{0} & \hat{P}_{n}\ar[l]\ar[d]_{f_{0}}\ar@{-->}[dr]_{h_1} &
\hat{P}_{n+1}\ar[l]\ar[d]_{f_1} & \dots\ar[l] & \hat{P}_{n+j-1}\ar[l]\ar[d]_{f_{j-1}}\ar@{-->}[dr]_{h_{j}} &
\hat{P}_{n+j}\ar[l]\ar[d]_{f_{j}}\ar@{-->}[dr]_{h_{j+1}} & \hat{P}_{n+j+1}\ar[l]\ar[d]_{f_{j+1}} & \dots\ar[l] \\
\dots & \hat{P}_{-1}\ar[l] & \hat{P}_{0}\ar[l] &
\hat{P}_{1}\ar[l] & \dots\ar[l] & \hat{P}_{j-1}\ar[l] &
\hat{P}_{j}\ar[l] & \hat{P}_{j+1}\ar[l] & \dots\ar[l] \\
}\]
Since
\[ \partial\circ g=\partial f_j-\partial h_j\partial=f_{j-1}\partial-(f_{j-1}-h_{j-1}\partial)\partial=0, \]
we get an $I$-map $h_{j+1}$ satisfying $f_j=\partial\circ h_{j+1}$ (by Proposition~\ref{llemma}.(i)). This defines $h$ in the positive range. Now,
\[ f_{-1}\circ \partial=\partial\circ f_0=0. \]
By Proposition~\ref{llemma}.(ii) there is an $I$-map $h_{-1}$ satisfying $f_{-1}=h_{-1}\circ \partial$. Continuing inductively using Proposition~\ref{llemma}.(ii) we end up with the desired null-homotopy $h$.
\end{proof}

\bemerkung
In general, the theorem is wrong if the degree of $f$ is positive. For Example, consider the following chain transformation of degree $1$:
\[
\xymatrix{
\dots &
\hat{P}_{-2} \ar[l] \ar[d]_0 &
\hat{P}_{-1} \ar[l]_\partial\ar[d]_0 &
\hat{P}_{0} \ar[l]_\norm \ar[d]_\norm&
\hat{P}_{1} \ar[l]_\partial \ar[d]_0&
\hat{P}_{2} \ar[l]_\partial \ar[d]_0&
\dots  \ar[l] \\
\dots &
\hat{P}_{-3} \ar[l] &
\hat{P}_{-2}  \ar[l]^\partial &
\hat{P}_{-1}  \ar[l]^\partial&
\hat{P}_{0}  \ar[l]^\norm&
\hat{P}_{1} \ar[l]^\partial&
\dots \ar[l]
} \]
Assume there is an $I$-homotopy $h$ for this cochain. Then we would have
$\norm\, h_0+h_1\, \norm = \norm$, but $\norm\cdot I=I\cdot \norm=0$, a contradiction.\bigskip

\bemerkung \label{iabbbemerkung}
The null-homotopy of $\bar{u}_i\bar{u}_j+\bar{u}_j\bar{u}_i$ given by \eqref{xyhomotopy} is an $I$-map if $r\geq 3$, because the only non-vanishing matrix coefficient is $z_1\dots\widehat{z_i}\dots\widehat{z_j}\dots z_r\in I$. Furthermore, the map $\bar{\varphi}_\beta$ is an $I^2$-map in degrees $>m$ and $<0$ (if $r\geq 3$), because the only non-vanishing coefficient is $\norm_1=z_2z_3\dots z_r$.

\subsection{End of the proof}
From now on, let $r\geq 3$. We will follow the construction of a representative $m$ for $\gamma$. At the same time we determine the multiplicative structure of the cohomology ring. Define the map $f_1$ on a $k$-basis of $\Tate^*_L(k,k)$ as follows:
\begin{align*}
 f_1\left(u_1^{\alpha_1}u_2^{\alpha_2}\cdots u_r^{\alpha_r}\right)&=
  \bar{u}_1^{\alpha_1}\bar{u}_2^{\alpha_2}\cdots \bar{u}_r^{\alpha_r} \\
 f_1\left(\varphi_\beta\right)&=\bar{\varphi}_\beta
\end{align*}
Here $\alpha$ and $\beta$ run through all (non-negative) multi-indices. Next, we are going to construct $f_2$ in the non-negative range. Instead of constructing $f_2$ explicitly, we will only prove the existence of a homotopy which is an $I$-map. 
\begin{lemma}\label{permutlemma0} Let $i_1,i_2,\dots,i_s$ be a sequence of indices from $\{1,2,\dots,r\}$, and let $\sigma$ be a permutation of $\{1,2,\dots,s\}$. Then the cocycle $\bar{u}_{i_1}\bar{u}_{i_2}\dots\bar{u}_{i_s}+\bar{u}_{i_{\sigma(1)}}\bar{u}_{i_{\sigma(2)}}\dots\bar{u}_{i_{\sigma(s)}}$ is null-homotopic via an $I$-map.
\end{lemma}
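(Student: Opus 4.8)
The plan is to reduce the statement about an arbitrary permutation $\sigma$ to the case of a transposition of two adjacent indices, and then to invoke Theorem~\ref{fsatz} (the lifting property of the resolution). Since every permutation of $\{1,2,\dots,s\}$ is a product of adjacent transpositions, and since the class of a null-homotopic cocycle modulo $I$-map-null-homotopies behaves additively under composition on the left and right with the chain maps $\bar u_{i_l}$ (which are $I$-maps themselves, hence carry $I$-maps to $I$-maps), it suffices to treat
\[ \bar u_{i_1}\cdots\bar u_{i_l}\bar u_{i_{l+1}}\cdots\bar u_{i_s}+\bar u_{i_1}\cdots\bar u_{i_{l+1}}\bar u_{i_l}\cdots\bar u_{i_s}, \]
which equals $\bar u_{i_1}\cdots\bar u_{i_{l-1}}\,(\bar u_{i_l}\bar u_{i_{l+1}}+\bar u_{i_{l+1}}\bar u_{i_l})\,\bar u_{i_{l+2}}\cdots\bar u_{i_s}$. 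By Remark~\ref{iabbbemerkung}, the middle factor $\bar u_{i_l}\bar u_{i_{l+1}}+\bar u_{i_{l+1}}\bar u_{i_l}$ has an explicit null-homotopy \eqref{xyhomotopy} which is an $I$-map when $r\geq 3$ (the single nonzero coefficient being $z_1\cdots\widehat{z_{i_l}}\cdots\widehat{z_{i_{l+1}}}\cdots z_r\in I$). Pre- and post-composing this $I$-map null-homotopy with the $I$-maps $\bar u_{i_{l+2}}\cdots\bar u_{i_s}$ and $\bar u_{i_1}\cdots\bar u_{i_{l-1}}$ yields an $I$-map null-homotopy for the adjacent-transposition difference.

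To make the reduction from general $\sigma$ precise, I would argue by induction on the length of $\sigma$ as a word in adjacent transpositions. Write $\sigma=\tau\sigma'$ with $\tau$ an adjacent transposition and $\sigma'$ shorter; set $v_\pi=\bar u_{i_{\pi(1)}}\cdots\bar u_{i_{\pi(s)}}$ for a permutation $\pi$. Then $v_{\id}+v_\sigma=(v_{\id}+v_{\sigma'})+(v_{\sigma'}+v_{\tau\sigma'})$. The first summand is $I$-map-null-homotopic by induction. For the second, note that $v_{\sigma'}$ and $v_{\tau\sigma'}$ differ by swapping two \emph{adjacent} factors in the product $\bar u_{i_{\sigma'(1)}}\cdots\bar u_{i_{\sigma'(s)}}$, so it equals $(\text{$I$-map})\circ(\bar u_a\bar u_b+\bar u_b\bar u_a)\circ(\text{$I$-map})$ for appropriate indices $a,b$, and the previous paragraph applies. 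Since a sum of two $I$-map null-homotopies is again an $I$-map null-homotopy (the $I$-maps form a two-sided ideal, in particular a $k$-submodule, of the endomorphism algebra), we are done.

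The one point that requires genuine care—and which I expect to be the main obstacle—is checking that all of this is valid degreewise over \emph{all} of $\hat P_*$, including the transition degrees near $0$ and $-1$ where the resolution is not simply given by the obvious "monomial" formulas. One must confirm that $\bar u_{i_l}\bar u_{i_{l+1}}+\bar u_{i_{l+1}}\bar u_{i_l}$ really is exactly the map computed in \S\ref{sprojauflukozykel} (nonzero only in degrees $-1$ and $-2$), that \eqref{xyhomotopy} really is a chain homotopy there (i.e. $\partial h+h\partial$ equals that map in every degree), and that composing with the $\bar u$'s does not spoil the $I$-map property at the augmentation. All of these are the "straightforward" verifications already flagged in the text; once they are granted, the inductive argument above closes the proof with no further computation. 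Note that $r\geq 3$ is used exactly once and essentially, namely to guarantee that $z_1\cdots\widehat{z_{i_l}}\cdots\widehat{z_{i_{l+1}}}\cdots z_r$ lies in $I$ rather than being a unit.
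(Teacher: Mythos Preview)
Your argument is essentially the paper's: reduce to adjacent transpositions, sandwich the explicit homotopy \eqref{xyhomotopy} between the remaining $\bar u$'s, and use that $I$-maps form a two-sided ideal. The paper phrases the induction as ``the set $K$ of permutations that work is a subgroup containing the transpositions,'' but the content is the same.

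Two small corrections. First, the $\bar u_{i_l}$ are \emph{not} $I$-maps: in every degree except $-1$ they act by $(\alpha)\mapsto(\alpha-\varepsilon^{i_l})$ with coefficient $1\notin I$. Fortunately you do not need this; what you need is that composing an $I$-map with \emph{any} map on either side yields an $I$-map, which is exactly the two-sided-ideal property you invoke later. So drop the parenthetical ``which are $I$-maps themselves'' and the argument stands. Second, Theorem~\ref{fsatz} plays no role here: the homotopy \eqref{xyhomotopy} is already an explicit $I$-map (for $r\geq 3$), so no lifting is required. Your opening sentence should cite Remark~\ref{iabbbemerkung} and \eqref{xyhomotopy} rather than Theorem~\ref{fsatz}.
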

\begin{proof}
Let $K$ be the set of all permutations $\sigma$ such that for every sequence
$i_1,i_2,\dots,i_s$ of indices the cocycle
\[ \bar{u}_{i_1}\bar{u}_{i_2}\dots\bar{u}_{i_s}+\bar{u}_{i_{\sigma(1)}}\bar{u}_{i_{\sigma(2)}}\dots\bar{u}_{i_{\sigma(s)}} \]
is null-homotopic via an $I$-map. The set $K$ contains the trivial permutation and is closed under composition. Therefore, it is a subgroup of the symmetric group in $s$ letters.
On the other hand, $K$ contains all transpositions $(i,j)$: in this case we can construct a homotopy from the $\bar{u}_i$'s and the homotopy of $\bar{u}_i\bar{u}_j+\bar{u}_j\bar{u}_i$ given by~\eqref{xyhomotopy}, which is an $I$-map. Since the transpositions generate the symmetric group, we are done.
\end{proof}

For every multi-index $\alpha$ we will use the usual notation
$u^\alpha = u_1^{\alpha_1} u_2^{\alpha_2}\cdots u_r^{\alpha_r}.$
Let $\alpha$ and $\beta$ be given multi-indices. We have to choose for $f_2(u^\alpha,u^\beta)$ a null-homotopy of $f_1(u^{\alpha+\beta})+f_1(u^\alpha)f_1(u^\beta)$. But this cocycle is of the form described in Proposition~\ref{permutlemma0}; therefore, we can choose $f_2(u^\alpha,u^\beta)$ to be an $I$-map.

The next case is the one of two negative arguments, i.e.~we want to construct $f_2(\varphi_\alpha,\varphi_\beta)$. Note that $\bar{\varphi}_\alpha\bar{\varphi}_\beta$ is an $I^2$-map: 
\begin{align*}
 \xymatrix{
\dots & \hat{P}_{-(n+3)} \ar[l]\ar@{-->}[d] & \hat{P}_{-(n+2)} \ar[l]\ar@{-->}[d] & \hat{P}_{-(n+1)} \ar[l]\ar[d] & \hat{P}_{-n} \ar[l]\ar[d] & \dots\ar[l] & \ar@{}[d]|{\displaystyle \bar{\varphi}_\beta}  \\
\dots & \hat{P}_{-2} \ar[l]\ar[d] & \hat{P}_{-1} \ar[l]\ar[d] & \hat{P}_{0} \ar[l]\ar@{-->}[d] & \hat{P}_{1} \ar[l]\ar@{-->}[d] & \dots\ar[l] & \ar@{}[d]|{\displaystyle \bar{\varphi}_\alpha}  \\
\dots & \hat{P}_{m-1} \ar[l] & \hat{P}_{m} \ar[l] & \hat{P}_{m+1} \ar[l] & \hat{P}_{m+2} \ar[l] & \dots\ar[l] & }
\end{align*}
In this diagram we wrote $n=|\beta|$ and $m=|\alpha|$. The dashed arrows are $I^2$-maps (by Remark~\ref{iabbbemerkung}); hence, so is the composition. By Theorem \eqref{fsatz} there is a null-homotopy of $\bar{\varphi}_\alpha\bar{\varphi}_\beta$ which is an $I$-map. In particular, $\varphi_\alpha\varphi_\beta=0$ and $f_2(\varphi_\alpha,\varphi_\beta)$ can be chosen to be an $I$-map.\bigskip

As a last case consider $\bar{\varphi}_\beta\bar{u}_i$ for some index $i$ and a multi-index $\beta$. Let $n=|\beta|$. We get the following diagram:
\[
\xymatrix@C=15pt{
 &
\hat{P}_{-(n+2)} \ar[l]\ar[d]&
\hat{P}_{-(n+1)} \ar[l]\ar[d] &
\hat{P}_{-n}     \ar[l]\ar[d]&
\dots            \ar[l]\ar@{}[d]|{\displaystyle \bar{u}_i}&
\hat{P}_{-1}     \ar[l]\ar[d] & 
\hat{P}_{0}      \ar[l]\ar@{-->}[d] &
\hat{P}_{1}      \ar[l]\ar[d] &
\hat{P}_{2}      \ar[l]\ar[d]  &
\ar[l] \\
 &
\hat{P}_{-(n+3)} \ar[l]\ar@{-->}[d]&
\hat{P}_{-(n+2)} \ar[l]\ar@{-->}[d] &
\hat{P}_{-(n+1)} \ar[l]\ar[d]&
\dots            \ar[l]\ar@{}[d]|{\displaystyle \bar{\varphi}_\beta}&
\hat{P}_{-2}     \ar[l]\ar[d] & 
\hat{P}_{-1}     \ar[l]\ar[d] &
\hat{P}_{0}      \ar[l]\ar@{-->}[d] &
\hat{P}_{1}      \ar[l]\ar@{-->}[d]  &
\ar[l] \\
 &
\hat{P}_{-2}  \ar[l]&
\hat{P}_{-1}  \ar[l] &
\hat{P}_{0}   \ar[l]&
\dots         \ar[l]&
\hat{P}_{n-1} \ar[l] & 
\hat{P}_{n}   \ar[l] &
\hat{P}_{n+1} \ar[l] &
\hat{P}_{n+2} \ar[l]  &
\ar[l] \\
}
\]
Again, the dashed arrows are $I^2$-maps. This shows that $\bar{\varphi}_\beta\bar{u}_i$ consists of $I^2$-maps in degrees $\geq n$ and $\leq -1$. In the remaining degrees the map is given by
\begin{align}
 (\alpha)^*&=(\svec{-1}-\alpha)\mapsto (\beta-(\alpha+\varepsilon^i)) \label{bmapv}
\end{align}
Suppose $\beta_i>0$. Then the cocycle $\bar{\varphi}_{(\beta-\varepsilon^i)}$ is given by the same formula in degrees $0,1,\dots,n-1$; in the remaining degrees, it is an $I^2$-map. This implies that $\bar{\varphi}_{(\beta-\varepsilon^i)}-\bar{\varphi}_\beta\bar{u}_i$ is an $I^2$-map. In particular, $\varphi_{(\beta-\varepsilon^i)}=\varphi_\beta u_i$. If $\beta_i=0$ then the right hand side of \eqref{bmapv} is always zero; hence, $\bar{\varphi}_\beta\bar{u}_i$ is an $I^2$-map. Therefore, $\varphi_\beta u_i=0$. 

In both cases we can choose $f_2(\varphi_\beta,u_i)$ to be an $I$-map (using Theorem~\ref{fsatz}). We have already determined the multiplicative structure of $\Tate^*_L(k,k)$. Now we can extend the definition of $f_2$ to all pairs $(\varphi_\beta,u^\alpha)$ with multi-indices $\alpha$ and $\beta$ inductively: Suppose we have constructed $f_2$ for all pairs $(\varphi_\beta,u^\alpha)$ with $|\alpha|<m$, and let $\alpha$ be a multi-index with $|\alpha|=m$. Let $i$ be the maximal index satisfying $\alpha_i>0$. Then we have already chosen an $I$-map $f_2(\varphi_\beta,u^{\alpha-\varepsilon^i})$, and we can define
\[ f_2(\varphi_\beta,u^\alpha)=f_2(\varphi_\beta,u^{\alpha-\varepsilon^i})f_1(u_i)+f_2(\varphi_{\beta+\varepsilon^i-\alpha},u_i), \]
which is an $I$-map. Here we used the convention $\varphi_\delta=0$ if $\delta_i<0$ for some index $i$. Now we have defined an $I$-map $f_2(\varphi_\beta,u^\alpha)$ for all $\alpha,\beta$. The case $f_2(u^\alpha,\varphi_\beta)$ is similar.\bigskip

Now we have constructed $f_2$ in such a way that $f_2(x,y)$ is an $I$-map for all $x,y$. Using this, Theorem~\ref{rsatz} follows from Proposition~\ref{ilemma}.\qed

\section{Abelian $p$-groups}\label{egeneralgroups}

This section is devoted to the proof of Theorem~\ref{rsatzallgemein}. We will start in a more general context. Suppose we are given groups $G_1, G_2,\dots, G_r$ and corresponding complete projective resolutions $\hat{P}^\idx{i}$ of $k$ as a trivial $kG_i$-module. We will then show how to construct a complete projective resolution $\hat{P}$ of $k$ as a trivial $k(G_1\times G_2\times\dots\times G_r)$-module from these data. We will also construct elements of the endomorphism-dga of $\hat{P}$ from given elements of the endomorphism-dgas of the $\hat{P}^\idx{i}$.
Then we put $G_i=\mathbb{Z}/p^{p_i}\mathbb{Z}$ and generalise the lifting property of §~\ref{sliftungseigen1} to this case in §~\ref{eliftproperty}. In the last section we will use this to prove Theorem~\ref{rsatzallgemein}.

\subsection{General theory} \label{allgemeinetheorie}
Let $k$ be a field of characteristic $p>0$ and $r\geq 2$. Let $G_1,G_2,\dots,G_r$ be finite groups and $G=\prod_{i=1}^r G_i$. Then we have a canonical isomorphism $kG\cong kG_1\otimes kG_2\otimes\dots\otimes kG_r$ which we will suppress from notation. Given a $kG_i$-module $M_i$ for all $i=1,2,\dots,r$, the module $M_1\otimes\dots\otimes M_r$ gets a canonical $kG$-module structure. In particular, if $M_i=k$ is the trivial $kG_i$-module, then $M_1\otimes\dots\otimes M_r\cong k$ is the trivial $kG$-module.

As a first step, we construct a projective resolution (similar to \cite{Carlson}, Proposition~7.5). Suppose we are given complete projective resolutions $\hat{P}^\idx{i}$ of $k$ as a trivial $kG_i$-module for every $i=1,2,\dots,r$. By tensoring all the complexes $0\longleftarrow \hat{P}^\idx{i}_0\longleftarrow \hat{P}^\idx{i}_1\longleftarrow\dots$ over $k$ we get a complex
\begin{equation} \label{atpositiverkomplex}
 \hat{P}^+_*:\quad 0\longleftarrow \hat{P}_0\longleftarrow \hat{P}_1\longleftarrow \hat{P}_2\longleftarrow \dots 
\end{equation}
of $kG$-modules. The $n$-th module is given by
\[ \hat{P}_n=\bigoplus_{\alpha\in M_n} \hat{P}^\idx{1}_{\alpha_1}\otimes\dots\otimes \hat{P}^\idx{r}_{\alpha_r}, \]
where $M_n=\left\{ \alpha\in\mathbb{Z}_{\geq 0}^r \mid \sum_{i=1}^r \alpha_i=n \right\}$. By the Künneth theorem,
\begin{align}\label{cpposhomology}
H_j(\hat{P}^+)=\begin{cases} \bigotimes_{i=1}^r \hat{P}^\idx{i}_0 / \im \partial^\idx{i}_1 & \text{for $j=0$,} \\ 0 & \text{otherwise.} \end{cases} 
\end{align}
On the other hand we can tensor all the complexes $\dots\longleftarrow \hat{P}^\idx{i}_{-2}\longleftarrow \hat{P}^\idx{i}_{-1}\longleftarrow 0$ over $k$. Then we get a complex
\begin{align} 
 \hat{P}^-_*:\quad \dots \longleftarrow \hat{P}_{-3} \longleftarrow \hat{P}_{-2}\longleftarrow \hat{P}_{-1} \longleftarrow 0 
\end{align}
of $kG$-modules. Here
\[ \hat{P}_n=\bigoplus_{\alpha\in N_n} \hat{P}^\idx{1}_{\alpha_1}\otimes\dots\otimes \hat{P}^\idx{r}_{\alpha_r}, \]
where $N_n=\left\{ \alpha\in\mathbb{Z}_{<0}^r \mid \sum_{i=1}^r \alpha_i=n-r+1 \right\}$.
As before we get 
\begin{align} \label{cpneghomology} H_j(\hat{P}^-)=\begin{cases} \bigotimes_{i=1}^r \ker \partial^\idx{i}_{-1} & \text{for $j=-1$,} \\ 0 & \text{otherwise.} \end{cases}
\end{align}
By tensoring all the maps $\partial^\idx{i}_0:\hat{P}^\idx{i}_0\rightarrow \hat{P}^\idx{i}_{-1}$ we get a map
$\partial_0:\hat{P}_0\rightarrow \hat{P}_{-1}$ of $kG$-modules, which can be used to glue $\hat{P}^+$ and $\hat{P}^-$ together. We get a complex $\hat{P}$ of $kG$-modules with trivial homology (by \eqref{cpposhomology}, \eqref{cpneghomology} and the definition of $\partial_0$). Thus $\hat{P}$ is a complete resolution of $k$ as a trivial $kG$-module. The differential is given by the formula
\begin{align*}
\hat{P}_n &\longrightarrow \hat{P}_{n-1} \\
(a^\adx{1}, a^\adx{2},\dots, a^\adx{r}) &\mapsto \begin{cases} 
\displaystyle \sum_{|a^\adx{i}|>0}  (-1)^{|a^1|+\dots+|a^{i-1}|} (a^\adx{1}, a^\adx{2},\dots, \partial(a^\adx{i}),\dots, a^\adx{r}) & \text{if $n>0$} \\
\displaystyle \sum_{i} (-1)^{|a^1|+\dots+|a^{i-1}|+(i-1)}  (a^\adx{1}, a^\adx{2},\dots, \partial(a^\adx{i}),\dots, a^\adx{r}) & \text{if $n<0$} \\
(\partial(a^\adx{1}), \partial(a^\adx{2}),\dots,\partial(a^\adx{r})) & \text{if $n=0$} 
\end{cases}
\end{align*}
Denote by $A^\vdx{j}$ and $A$ the endomorphism dgas of $\hat{P}^\idx{j}$ and $\hat{P}$, respectively. We write $d$ for their differentials; these are defined as $df=\partial\circ f-(-1)^{|f|} f\circ \partial$. We are now going to construct elements in $A$ from given elements in the $A^\vdx{j}$'s. This will be done for elements of positive and negative degrees separately. We begin with the positive part. Let $f\in A^\vdx{j}$ be an element of degree $n\geq 0$. Define $\Phi(f)\in A$ of degree $n$ as follows: \label{phidefinition}%
\begin{align*}
\intertext{In degree $k\geq 0$:}
 \hat{P}_{k+n}&\longrightarrow \hat{P}_k \\
 (a^\idx{1}, \dots, a^\idx{r} ) &\mapsto \begin{cases} (-1)^{n(|a^1|+\dots+|a^{j-1}|)} (a^\idx{1},\dots,  a^\idx{j-1}, f(a^\idx{j}), a^\idx{j+1},\dots , a^\idx{r}) & \textrm{if $|a^\idx{j}|\geq n$} \\
 0 &\textrm{otherwise} \end{cases} 
\intertext{In degrees $k=-1,-2,\dots,-n$:}
 \hat{P}_{k+n}&\longrightarrow \hat{P}_k \\
 (a^\idx{1},\dots, a^\idx{r}) &\mapsto \begin{cases} (\partial a^\idx{1},\dots, \partial a^\idx{j-1}, f(a^\idx{j}), \partial a^\idx{j+1},\dots, \partial a^\idx{r}) & \textrm{if $|a^\idx{i}|=0$ for all $i\neq j$} \\
  0 & \textrm{otherwise} \end{cases}
\intertext{In degree $k<-n$:}
\hat{P}_{k+n}&\longrightarrow \hat{P}_k \\
(a^\idx{1}, \dots, a^\idx{r})&\mapsto  (-1)^{n(|a^1|+\dots+|a^{j-1}|+(j-1))} (a^\idx{1},\dots,  a^\idx{j-1}, f(a^\idx{j}), a^\idx{j+1},\dots , a^\idx{r}) 
\end{align*}
A straightforward calculation yields
\begin{lemma} \label{phidkomm} For every element $f\in A^\vdx{j}$ of non-negative degree we have  $\Phi(df)=d\Phi(f)$. \end{lemma}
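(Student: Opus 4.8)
The plan is to verify the equality $\Phi(df)=d\Phi(f)$ one cohomological degree at a time. Writing $n=|f|\ge 0$, both sides are families of $kG$-linear maps $\hat P_{k+n+1}\longrightarrow\hat P_k$ indexed by $k\in\mathbb Z$; here $d\Phi(f)=\partial\circ\Phi(f)-(-1)^n\,\Phi(f)\circ\partial$, and $\Phi(df)$ is to be read off from the degree-$(n+1)$ clauses in the definition of $\Phi$ given above. Since the differential $\partial$ of $\hat P$ and each of the three clauses defining $\Phi$ are built from operations acting in a single tensor factor, I would evaluate both sides on a basis element $(a^\idx{1},\dots,a^\idx{r})$ with $a^\idx{i}\in\hat P^\idx{i}_{\alpha_i}$, and expand $(\partial\circ\Phi(f))(a^\idx{1},\dots,a^\idx{r})$ and $(\Phi(f)\circ\partial)(a^\idx{1},\dots,a^\idx{r})$ as sums over the slot $i$ in which $\partial$ is applied. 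In every term with $i\ne j$ the operator $\partial$ (in slot $i$) and the operator $f$ (in slot $j$) act on disjoint factors, so such a term occurs once in each of the two expansions; the Koszul sign attached to $\Phi$, together with the overall factor $(-1)^n$, is exactly what makes the two occurrences cancel. What survives is the $i=j$ contribution, and it reassembles to the corresponding sign times $\bigl(a^\idx{1},\dots,(\partial f-(-1)^n f\partial)(a^\idx{j}),\dots,a^\idx{r}\bigr)=\Phi(df)(a^\idx{1},\dots,a^\idx{r})$. This is the whole content of the lemma; the work is entirely in the sign bookkeeping.

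I would first handle the two interior ranges. For $k\ge 0$ every module in sight lies in non-negative degree, so $\partial$ is throughout given by its $n>0$ clause and $\Phi(f),\Phi(df)$ by their ``degree $\ge 0$'' clauses; the cancellation just described is then a direct check, the only subtlety being that pushing $\partial$ past $f(a^\idx{j})$ rather than $a^\idx{j}$ is harmless because $\Phi(f)$ does not alter the degrees of the slots $i\ne j$. For $k$ below $-n-1$ the identical computation runs in negative degrees, using the $n<0$ clause of $\partial$ and the third clause of $\Phi$; the one new ingredient is the uniform extra sign $(-1)^{(j-1)(\cdots)}$ in that clause, which matches the corresponding $(i-1)$-shift in the $n<0$ differential. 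I would write out one of these ranges in full and note that the other follows by the same computation with arrows reversed.

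The substance of the proof lies in the finitely many boundary degrees, namely $k=-1,-2,\dots,-n-1$ (together with the two smallest non-negative degrees in the edge case $n=0$), where either the regime of $\partial$ changes between source and target — so that the $n=0$ clause $\partial_0(a^\idx{1},\dots,a^\idx{r})=(\partial a^\idx{1},\dots,\partial a^\idx{r})$ enters — or the middle clause of $\Phi$, the one defined in degrees $-1,\dots,-n$ which inserts $\partial$ in \emph{every} slot other than $j$, is invoked on only one side of the equation. In each such degree I would write out $(\partial\circ\Phi(f))(a)$, $(\Phi(f)\circ\partial)(a)$ and $\Phi(df)(a)$ explicitly on a basis element, split the resulting sums according to which of the $|a^\idx{i}|$ vanish, and match terms directly. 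The reason this works is that the middle clause of $\Phi$ is designed precisely to interpolate: above degree $0$ the differential has not yet been applied in the slots $i\ne j$, below degree $-n$ it already has, and the degrees $-1,\dots,-n$ record that transition; likewise the shift of the sign exponent by $(j-1)$ below degree $-n$ is forced by compatibility with $\partial_0$. I expect this boundary bookkeeping — keeping the signs straight while the two definitions switch clauses out of step with one another — to be the only genuinely delicate step; once it is settled the interior ranges are routine.
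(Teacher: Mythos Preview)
Your proposal is correct and is exactly what the paper has in mind: the paper records this lemma with the phrase ``a straightforward calculation yields'' and gives no further details, so the degree-by-degree verification you outline---cancelling the $i\neq j$ terms by the Koszul sign and checking the finitely many boundary degrees where the clauses of $\partial$ and $\Phi$ switch---is precisely that omitted calculation.
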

\begin{korollar} \label{Phikorollar} The map $\Phi$ sends cocycles of $A^\vdx{j}$ of positive degree to cocycles of $A$. If the elements $f,f'\in A^\vdx{j}$ of positive degree are homotopic via $h\in A^\vdx{j}$, then $\Phi(f)$ and $\Phi(f')$ are homotopic via $\Phi(h)$.
\end{korollar}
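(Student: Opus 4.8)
The plan is to deduce the corollary formally from Lemma~\ref{phidkomm}. The first thing I would record is that the assignment $f\mapsto\Phi(f)$ is $k$-linear on elements of a fixed non-negative degree: in each of the three degree ranges in the definition of $\Phi$ (cf.~\pageref{phidefinition}) the formula inserts $f(a^\idx{j})$ into the $j$-th tensor slot with a sign that depends only on the degrees $|a^1|,\dots,|a^{j-1}|$, hence $\Phi(\lambda f+\mu f')=\lambda\Phi(f)+\mu\Phi(f')$ whenever $f,f'$ are homogeneous of the same non-negative degree; in particular $\Phi(0)=0$. This is a one-line check and is the only ``computation'' involved.

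For the first assertion, let $f\in A^\vdx{j}$ be a cocycle of positive degree $n$, so $df=0$. Then Lemma~\ref{phidkomm} together with $\Phi(0)=0$ gives
\[ d\Phi(f)=\Phi(df)=\Phi(0)=0, \]
so $\Phi(f)$ is a cocycle of $A$.

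For the second assertion, suppose $f,f'\in A^\vdx{j}$ are homogeneous of degree $n>0$ and homotopic via $h\in A^\vdx{j}$, i.e.\ $f-f'=dh$ with $h$ of degree $n-1$. Since $n\geq 1$ we have $n-1\geq 0$, so $h$ lies in the range of degrees on which $\Phi$ has been defined and $\Phi(h)\in A$ makes sense. Applying Lemma~\ref{phidkomm} and linearity once more,
\[ \Phi(f)-\Phi(f')=\Phi(f-f')=\Phi(dh)=d\Phi(h), \]
which is exactly the statement that $\Phi(f)$ and $\Phi(f')$ are homotopic in $A$ via $\Phi(h)$.

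There is no real obstacle here: the corollary is a direct consequence of Lemma~\ref{phidkomm}, and the only point that genuinely needs attention is the degree bookkeeping — one must have $n>0$ rather than merely $n\geq 0$ in the homotopy statement, precisely so that the homotopy $h$ has non-negative degree and $\Phi(h)$ is defined by the construction above.
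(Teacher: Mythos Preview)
Your proof is correct and is exactly the formal deduction the paper has in mind: the corollary is stated without proof as an immediate consequence of Lemma~\ref{phidkomm}, and your argument spells out precisely this ($d\Phi(f)=\Phi(df)=0$ for cocycles, $\Phi(f)-\Phi(f')=\Phi(dh)=d\Phi(h)$ for homotopies), including the correct observation that $n>0$ is needed so that $h$ has non-negative degree and $\Phi(h)$ is defined.
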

\begin{lemma} \label{phimultiplikativ} For all $f,g\in A^\vdx{j}$ of positive degree we have $\Phi(fg)=\Phi(f)\Phi(g)$. \end{lemma}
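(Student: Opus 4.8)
\textit{Proof proposal.}
Write $n=|f|$ and $m=|g|$, so $fg\in A^{(j)}$ has degree $n+m$ and $(fg)_l=f_l\circ g_{l+n}$. The plan is to check the equality $\Phi(fg)_k=\Phi(f)_k\circ\Phi(g)_{k+n}$ componentwise, for every $k\in\mathbb{Z}$, by a direct inspection of the three-part definition of $\Phi$ from page~\pageref{phidefinition}. The one structural fact that makes this work is that $\Phi(fg)$ acts on the $j$-th tensor slot exactly as $f\circ g$ does, just as the composite $\Phi(f)\Phi(g)$ does; so everything reduces to (a) matching the conditions under which the two sides are nonzero, and (b) matching the signs. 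I would split into the four ranges $k\geq 0$, $-n\leq k\leq -1$, $-(n+m)\leq k\leq -n-1$, and $k<-(n+m)$, according to which clause of the definition governs each factor.

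In the extreme range $k\geq 0$ both $\Phi(f)_k$ and $\Phi(g)_{k+n}$ are given by the "positive" clause (since $k+n\geq 0$). On $(a^{(1)},\dots,a^{(r)})\in\hat{P}_{k+n+m}$ the map $\Phi(g)_{k+n}$ is nonzero precisely when $|a^{(j)}|\geq m$, leaves all slots $i\neq j$ (hence the degrees $|a^{(i)}|$, $i\neq j$) untouched, and outputs $g(a^{(j)})$ of degree $|a^{(j)}|-m$; then $\Phi(f)_k$ is nonzero precisely when $|a^{(j)}|-m\geq n$, i.e. $|a^{(j)}|\geq n+m$, which is exactly the nonvanishing condition for $\Phi(fg)_k$. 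The two sign factors $(-1)^{m(|a^{(1)}|+\dots+|a^{(j-1)}|)}$ and $(-1)^{n(|a^{(1)}|+\dots+|a^{(j-1)}|)}$ — the second using the same exponent because $\Phi(g)$ did not change those degrees — multiply to the sign $(-1)^{(n+m)(|a^{(1)}|+\dots+|a^{(j-1)}|)}$ of $\Phi(fg)_k$. The range $k<-(n+m)$ is identical with the "negative" clause throughout and the exponent $(|a^{(1)}|+\dots+|a^{(j-1)}|+(j-1))$.

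In the mixed range $-n\leq k\leq -1$ the source $\hat{P}_{k+n+m}$ lies in the non-negative range, $\Phi(g)_{k+n}$ is still "positive", while $\Phi(f)_k$ is in its "middle" regime. The key point is that the middle formula for $\Phi(fg)_k$ is nonzero only on summands with $|a^{(i)}|=0$ for all $i\neq j$, which forces $|a^{(j)}|=k+n+m\geq m$; on exactly those summands $\Phi(g)_{k+n}$ is nonzero, acts as the identity on the slots $i\neq j$, and carries sign $+1$. The ensuing middle-regime $\Phi(f)_k$ then applies $\partial$ once in each slot $i\neq j$ and puts $f(g(a^{(j)}))=(fg)(a^{(j)})$ in slot $j$ with sign $+1$, which is precisely $\Phi(fg)_k$. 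The range $-(n+m)\leq k\leq -n-1$ is symmetric: there $\Phi(g)_{k+n}$ is in its middle regime (it supplies the single $\partial$) and $\Phi(f)_k$ is "negative", and one checks that the degrees $-1$ and the shift $(j-1)$ make the negative-regime sign collapse to $+1$, again reproducing the middle formula for $\Phi(fg)_k$.

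I expect the only genuine obstacle to be the bookkeeping in these two mixed ranges: one must be sure that the lone $\partial$ of the middle clause is produced once and only once by the composite, and that the sign exponents really do collapse to $1$ there. Once the mixed cases are settled the extreme cases are routine, and assembling the four ranges gives $\Phi(fg)_k=\Phi(f)_k\circ\Phi(g)_{k+n}$ for all $k$, i.e. $\Phi(fg)=\Phi(f)\Phi(g)$. \qed
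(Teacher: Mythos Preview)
Your proposal is correct and is exactly the kind of direct case-by-case verification the paper has in mind; the paper itself simply writes ``We omit the straightforward proof'' and gives no argument at all, so there is nothing to compare against beyond the definition of $\Phi$, which you unpack accurately in each of the four ranges (including the sign collapse in the mixed range $-(n+m)\le k\le -n-1$).
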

We omit the straightforward proof.

Suppose we are given indices $j\neq l$ and cocycles $f\in A^\vdx{j}$ and $g\in A^\vdx{l}$ of positive degrees $n$ and $m$, respectively. Then we know that $\Phi(f)\Phi(g)$ and $(-1)^{nm}\Phi(g)\Phi(f)$ are homotopic cycles in $A$. In fact, we can easily write down a homotopy $h$ as follows: For $k=-1,-2,\dots,-n-m+1$ we define $h$ to be
\begin{align} \label{homotopieH}
 \hat{P}_{k+n+m-1}&\longrightarrow \hat{P}_{k} \\
 (a^\idx{1}, \dots, a^\idx{r})&\mapsto
  (-1)^{n+(m-1)|a^j|}(\partial a^\idx{1},\dots, f(a^j),\dots, g(a^l),\dots , \partial a^\idx{r}) 
\end{align}
whenever $|a^\idx{i}|=0$ for all $i\neq j,l$ and $|a^\adx{j}|<n$ and $|a^\adx{l}|<m$. In all other cases (and for all other values of $k$) put $h=0$. A somewhat lengthy calculation shows
\begin{lemma}\label{phifphigkomm} $\Phi(f)\Phi(g)-(-1)^{nm}\Phi(g)\Phi(f)=dh.$ \end{lemma}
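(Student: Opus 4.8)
The plan is to verify the identity \eqref{homotopieH} by a direct comparison of both sides on a basis element $b=(a^\idx{1},\dots,a^\idx{r})$ of $\hat P_{k+n+m}$, computing its image in $\hat P_k$ for every $k\in\mathbb{Z}$. The key structural observation is that $\Phi(f)$ (resp.\ $\Phi(g)$) only ever alters the $j$-th (resp.\ $l$-th) tensor factor of $b$, possibly applies $\partial$ to the remaining factors, or annihilates $b$ outright; the same is true of the map $h$ of \eqref{homotopieH}, and $\partial$ itself acts factorwise. Since $j\neq l$, the factors outside $\{j,l\}$ are essentially passive — they receive $\partial$ at most once in any surviving term, because $\partial^2=0$ — so the whole calculation is controlled by the two degrees $|a^\idx{j}|$ and $|a^\idx{l}|$, by whether $\sum_{i\notin\{j,l\}}|a^\idx{i}|$ vanishes, and by the value of $k$.

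The computation then splits into three ranges of $k$. For $k\geq 0$ the source degree $k+n+m$ and the intermediate degrees $k+n$ and $k+m$ are all positive, so in both composites $\Phi(f)$ and $\Phi(g)$ act by their top-range formula, which is Koszul insertion into a single factor. Because $j\neq l$, and because the non-vanishing condition ($|a^\idx{j}|\geq n$ and $|a^\idx{l}|\geq m$) is the same in either order, a short sign computation shows that $\Phi(f)\Phi(g)$ and $(-1)^{nm}\Phi(g)\Phi(f)$ agree on $b$ term by term; hence the left-hand side is zero. On the other hand $h$ is supported in target degrees $-1,\dots,-(n+m-1)$, so $h_k=h_{k+1}=0$ and $dh=0$ as well. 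The symmetric argument, using the bottom-range formula for $\Phi$ (whose extra $(-1)^{n(|a^1|+\dots+|a^{j-1}|+(j-1))}$-type sign is again the same in both orders) together with the same support statement for $h$, disposes of all $k<-(n+m)$.

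The content of the lemma is the interface range $-(n+m)\leq k\leq -1$, where $\Phi(f)$ or $\Phi(g)$ sits in its middle regime (target in $\{-1,\dots,-n\}$, resp.\ $\{-1,\dots,-m\}$), in which it additionally applies $\partial$ to every other factor, and where a term of $h$ changes regime (at $k=-1$ and at $k=-(n+m)$). Here I would expand all four contributions $\partial\circ h_{k+1}$, $h_k\circ\partial$, $\Phi(f)\Phi(g)$ and $(-1)^{nm}\Phi(g)\Phi(f)$ on $b$, subdividing according to the position of $|a^\idx{j}|$ relative to $n$ (the decisive cases being $|a^\idx{j}|=n-1$ and $|a^\idx{j}|=n$), of $|a^\idx{l}|$ relative to $m$, and according to the vanishing of $\sum_{i\notin\{j,l\}}|a^\idx{i}|$. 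In each subcase every surviving term has the common shape ``$\partial$ in the factors outside $\{j,l\}$, $f$ in slot $j$, $g$ in slot $l$, with a sign'', and one checks that the terms of $\Phi(f)\Phi(g)-(-1)^{nm}\Phi(g)\Phi(f)$ cancel against those of $dh$ in pairs; the sign $(-1)^{n+(m-1)|a^\idx{j}|}$ built into \eqref{homotopieH} is precisely what makes this pairing close against the signs coming from the definitions of $\partial$, $\Phi$ and $df=\partial f-(-1)^{|f|}f\partial$. The one genuine difficulty is this sign accounting at the boundary cases and at the two extreme values of $k$, where the piecewise formulas for $\Phi$ and for $h$ meet; keeping $\Phi(f)$ and $\Phi(g)$ applied strictly in the order prescribed by the composite, and treating the factors outside $\{j,l\}$ as a single passive block under $\partial$, keeps the number of subcases small. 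This is the ``somewhat lengthy calculation'' referred to in the statement, and it requires no further idea once the above reduction is in place.
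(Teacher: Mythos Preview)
Your outline is correct and is exactly the approach the paper intends: the paper itself gives no argument beyond ``A somewhat lengthy calculation shows'', and your degree-by-degree case split (top range, bottom range, interface $-(n+m)\leq k\leq -1$ with subcases governed by $|a^\idx{j}|$ relative to $n$ and $|a^\idx{l}|$ relative to $m$) is precisely how that calculation goes. Your sign analysis in the top and bottom ranges is right, and the identification of the two boundary values $k=-1$ and $k=-(n+m)$ as the places where $h$ changes regime is the key bookkeeping point.
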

Now we turn to elements of negative degree. In contrast to what we did before, we choose an element $f_j\in A^\vdx{j}$ of degree $n_j=|f_j|\leq 0$ for \textit{every} index $j$ in such a way, that at most one of them has degree $0$. From these data we construct an element $\Psi(f_1,f_2,\dots,f_r)$ of degree $n$ (where $n+1=(n_1+1)+\dots+(n_r+1)$) as follows:
\label{psidefinition}
\begin{align*} 
\intertext{In degree $k\geq -n$:}
 \hat{P}_{k+n}&\longrightarrow \hat{P}_k \\
 (a^\idx{1}, \dots, a^\idx{r}) &\mapsto \begin{cases} (-1)^{t+(n+n_1)|a^1|} (f_1(a^\idx{1}), f_2(\partial a^\adx{2}), f_3(\partial a^\adx{3}), \dots , f_r(\partial a^\idx{r}) ) & \textrm{if $|a^\idx{i}|=0$ for all $i\geq 2$} \\
 0 &\textrm{otherwise} \end{cases} 
\intertext{In degrees $k=0,1,\dots,-n-1$:}
 \hat{P}_{k+n}&\longrightarrow \hat{P}_k \\
 (a^\idx{1},\dots, a^\idx{r}) &\mapsto \begin{cases} (-1)^{\sum_{i=1}^r (n+n_1+\dots+n_i+i-1)|a^i|} (f_1(a^\idx{1}), f_2(a^\adx{2}),\dots, f_r(a^\idx{r})) & \textrm{if $|a^\idx{i}|\geq n_i$ for all $i$} \\
  0 & \textrm{otherwise} \end{cases}
\intertext{In degree $k<0$:}
\hat{P}_{k+n}&\longrightarrow \hat{P}_k \\
(a^\idx{1}, \dots, a^\idx{r}) &\mapsto \begin{cases} (-1)^{s+(n+n_1)|a^1|} (f_1(a^\idx{1}), \partial f_2(a^\adx{2}), \partial f_3(a^\adx{3}),  \dots, \partial f_r(a^\idx{r})) & \textrm{if $|a^\idx{i}|=n_i$ for all $i\geq 2$} \\
 0 &\textrm{otherwise} \end{cases} 
\end{align*}
where 
$t=\sum_{i=2}^r (n+n_1+\dots+n_i+i-1)$ and         
$s=\sum_{i=2}^r (n+n_1+\dots+n_i+i-1)n_i$.
This is a well-defined element of $A$. Note that the first factor of $G=\prod_{i=1}^r G_i$ plays a special role. This is an arbitrary but, as it seems, unavoidable choice. A direct computation shows
\begin{lemma} \label{dpsikomm} 
If the degree of $f_i$ is negative for all $i$, then
$\Psi(f_1,f_2,\dots,df_i,\dots,f_r)$ is defined for all $i$, and we have
\[ d\Psi(f_1,f_2,\dots,f_r)=\sum_{i=1}^r (-1)^{|f_1|+\dots+|f_{i-1}|} \Psi(f_1,f_2,\dots,df_i,\dots,f_r). \] 
\end{lemma}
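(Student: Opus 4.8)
My plan is to prove the lemma by a direct, if lengthy, verification carried out degree by degree; there is no conceptual content beyond the bookkeeping of signs. First I would check well-definedness: if every $f_i$ has negative degree, then $df_i$ has degree $n_i+1\le 0$ while every $f_j$ with $j\ne i$ still has negative degree, so among $f_1,\dots,df_i,\dots,f_r$ at most one entry (namely $df_i$, and only when $n_i=-1$) has degree $0$. Hence $\Psi(f_1,\dots,df_i,\dots,f_r)$ is defined, of degree $n+1$, and both sides of the asserted identity are $kG$-linear maps $\hat{P}_{\bullet+n+1}\longrightarrow\hat{P}_\bullet$.

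Next I would set up the computation. Using $dF=\partial\circ F-(-1)^{|F|}F\circ\partial$ in $A$ and $|\Psi(f_1,\dots,f_r)|=n$, write
\[ d\Psi(f_1,\dots,f_r)=\partial\circ\Psi(f_1,\dots,f_r)-(-1)^n\,\Psi(f_1,\dots,f_r)\circ\partial, \]
fix a target degree $k$, and evaluate both sides on a basis element $(a^{\idx{1}},\dots,a^{\idx{r}})\in\hat{P}_{k+n+1}$ with $a^{\idx{i}}\in\hat{P}^{\idx{i}}_{\alpha_i}$. Substituting the three-case formula for the differential $\partial$ of $\hat{P}$ and the three-case formula defining $\Psi$ forces a case distinction according to which of the ranges $k\ge -n$, $0\le k\le -n-1$, $k<0$ the degree $k$ lies in, together with the analogous trichotomy (positive / zero / negative) for the source degree $k+n+1$ and for the intermediate degree. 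The transition values $k\in\{-n,\dots,-1,0\}$, where the piecewise formulas change, would have to be treated individually.

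The heart of the argument is then the cancellation pattern. In each case the terms that appear fall into two groups. The terms in which the $\partial$ produced by the differential of $\hat{P}$ acts on the $i$-th tensor slot \emph{and} that slot carries the map $f_i$ combine in pairs, via $\partial\circ f_i-(-1)^{n_i}f_i\circ\partial=df_i$, into the value of $(-1)^{|f_1|+\dots+|f_{i-1}|}\Psi(f_1,\dots,df_i,\dots,f_r)$ on the generator — the global sign coming out correctly precisely because of the prefactors $t$ and $s$ and the Koszul-type signs built into the definition of $\Psi$. All remaining terms — those where $\partial$ hits a slot on the ``wrong side'' of its $f_j$, or hits a factor $a^{\idx{j}}$ which the support condition of $\Psi$ forces to have degree $0$ (so that $\partial a^{\idx{j}}=0$), or pushes the element outside the support of $\Psi$ — cancel in pairs between $\partial\circ\Psi$ and $\Psi\circ\partial$, or vanish, exactly as in the K\"unneth-type arguments already used to build $\hat{P}$ and to establish Proposition~\ref{phidkomm}. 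Summing over all $k$ gives the identity. I expect the main obstacle to be purely a matter of stamina: carrying the signs $(-1)^{|f_1|+\dots+|f_{i-1}|}$, the Koszul signs hidden in $\partial$, and the prefactors $t,s$ consistently through every transition degree. The point is that the signs in the definition of $\Psi$ were rigged for exactly this purpose, so once the case bookkeeping is set up honestly the cancellations are forced.
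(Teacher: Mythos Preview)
Your proposal is correct and is exactly the approach the paper takes: the paper records only ``A direct computation shows'' before stating the lemma, so your outlined degree-by-degree verification, with the well-definedness check and the cancellation pattern you describe, is precisely the omitted work.
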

As an immediate consequence we get
\begin{korollar} If the maps $f_1,\dots,f_r$ are cocycles of negative degree, then so is $\Psi(f_1,f_2,\dots,f_r)$. If in addition $f_j$ is null-homotopic via some $q_j$, then $\Psi(f_1,f_2,\dots,f_r)$ is null-homotopic via $\Psi(f_1,f_2,\dots,q_j,\dots,f_r)$. \end{korollar}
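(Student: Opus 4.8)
The plan is to read off both assertions directly from the Leibniz-type formula of Lemma~\ref{dpsikomm}, so that no further manipulation of the explicit three-part formula for $\Psi$ is required. Two preliminary bookkeeping observations make this immediate. First, by construction $\Psi(f_1,\dots,f_r)$ has the degree $n$ determined by $n+1=\sum_{i=1}^r(|f_i|+1)$; since each $|f_i|\le -1$ we get $n+1\le 0$, hence $n<0$, so the output always has negative degree. Second, it is clear from the defining formulas that $\Psi$ is $k$-linear in each of its arguments — the case distinctions in the definition depend only on the (fixed) degrees — so $\Psi$ vanishes as soon as any single argument is $0$, and similarly $\Psi$ remains defined as long as all arguments have negative degree.

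For the cocycle claim I would argue as follows. All the $f_i$ have negative degree, so Lemma~\ref{dpsikomm} applies and gives
\[ d\Psi(f_1,\dots,f_r)=\sum_{i=1}^r(-1)^{|f_1|+\dots+|f_{i-1}|}\,\Psi(f_1,\dots,df_i,\dots,f_r). \]
Each $f_i$ is a cocycle, so $df_i=0$, and every summand then has a zero argument and hence vanishes; therefore $d\Psi(f_1,\dots,f_r)=0$, i.e. $\Psi(f_1,\dots,f_r)$ is a cocycle, of negative degree by the first observation.

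For the null-homotopy claim, suppose $f_j=dq_j$. Then $|q_j|=|f_j|-1<0$, so $f_1,\dots,q_j,\dots,f_r$ again all have negative degree and Lemma~\ref{dpsikomm} applies to $\Psi(f_1,\dots,q_j,\dots,f_r)$. In the resulting sum the terms with index $i\neq j$ carry the factor $df_i=0$ and vanish, while the term with index $i=j$ equals $(-1)^{|f_1|+\dots+|f_{j-1}|}\,\Psi(f_1,\dots,dq_j,\dots,f_r)=(-1)^{|f_1|+\dots+|f_{j-1}|}\,\Psi(f_1,\dots,f_r)$. Hence $d\Psi(f_1,\dots,q_j,\dots,f_r)=\pm\Psi(f_1,\dots,f_r)$, which exhibits $\Psi(f_1,\dots,f_r)$ as null-homotopic via $\Psi(f_1,\dots,q_j,\dots,f_r)$ (up to the sign $(-1)^{|f_1|+\dots+|f_{j-1}|}$, which is harmless — and trivial in characteristic $2$).

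There is essentially no obstacle: the only substantive computation is the one already invoked in the statement of Lemma~\ref{dpsikomm}, and everything else is the bookkeeping noted above — confirming that $\Psi$ is multilinear (so that a zero slot kills the term), that replacing $f_j$ by $q_j$ keeps us in the range $|q_j|<0$ where $\Psi$ and Lemma~\ref{dpsikomm} are available, and keeping track of the Koszul sign if one insists on odd characteristic.
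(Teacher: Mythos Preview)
Your proposal is correct and matches the paper's own treatment: the paper states the corollary as an immediate consequence of Lemma~\ref{dpsikomm} without further proof, and your argument is precisely the natural unpacking of that implication. The bookkeeping you spell out---multilinearity of $\Psi$ so that a zero slot kills the term, and the degree check $|q_j|=|f_j|-1<0$ keeping us in the range where the lemma applies---is exactly what is needed, and the residual sign is indeed harmless.
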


Let us investigate the relations of different compositions of $\Phi$'s and $\Psi$'s. To do so, we will use the notion of $I$-maps defined in §~\ref{sliftungseigen1}. 

From now on, let us assume that all the groups $G_j$ are commutative, and that $\hat{P}^\idx{j}_0=\hat{P}^\idx{j}_{-1}=kG_j$. Then we also have that $\hat{P}_0=\hat{P}_{-1}=kG$. Denote by $I^\vdx{j}$ the ideal $\im\partial^\idx{j}_0\cdot kG\subset kG$, and write $J_1$ for the ideal of $kG$ generated by all the ideals $I^\vdx{j}$, $j=1,2,\dots,r$. Finally, define $J_m$ to be the ideal $J_1^m$ for all $m$. Then we have a filtration 
\[ kG=:J_0\supset J_1\supset J_2\supset\dots\supset J_r \]
of ideals in $kG$. 

\begin{bemerkung}\label{phifbemerkung}
Recall the definition of $\Phi(f)$ for some $f\in A^\vdx{j}$ with $|f|=n$. In degree $k=-1,-2,\dots,-n$ it was given by
\begin{align*}
 \hat{P}_{k+n}&\longrightarrow \hat{P}_k \\
 (a^\idx{1},\dots,a^\idx{r}) &\mapsto \begin{cases} \pm(\partial a^\idx{1},\dots,\partial a^\idx{j-1},f(a^\idx{j}),\partial a^\idx{j+1},\dots,\partial a^\idx{r}) & \textrm{if $|a^\idx{i}|=0$ for all $i\neq j$} \\
  0 & \textrm{otherwise} \end{cases}
\end{align*}
On the right hand side we have $r-1$ differentials of elements of degree $0$. Therefore, these are $J_{r-1}$-maps. The same is true for $\Psi(f_1,\dots,f_r)$ in the 'exterior' range $k\geq -n$ and $k<0$. Furthermore, the homotopy $h$ given by \eqref{homotopieH} is a $J_{r-2}$-map.
\end{bemerkung}

\begin{lemma} \label{phikombo} Suppose we are given $f_i,g_i\in A^\vdx{i}$ of negative degree for all $i=1,2,\dots,r$. Then the composite $\Psi(f_1,f_2,\dots,f_r)\Psi(g_1,g_2,\dots,g_r)$ is a $J_{r-1}$-map. 
\end{lemma}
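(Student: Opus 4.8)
The plan is to deduce the statement directly from Remark~\ref{phifbemerkung} by a short case analysis on the degree, so that none of the explicit sign-laden formulas for $\Psi$ need to be touched. Write $n=|\Psi(f_1,\dots,f_r)|$ and $m=|\Psi(g_1,\dots,g_r)|$. Since every $f_i$ and every $g_i$ has negative degree, both $n$ and $m$ are negative; indeed $n+1=\sum_i(|f_i|+1)\le 0$, and similarly for $m$. In degree $k$ the composite is the map
\[ \hat{P}_{k+n+m}\xrightarrow{\Psi(g_1,\dots,g_r)}\hat{P}_{k+n}\xrightarrow{\Psi(f_1,\dots,f_r)}\hat{P}_k, \]
and I would show that, for every $k$, at least one of its two factors is a $J_{r-1}$-map. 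That suffices, because the composite of a $J_{r-1}$-map with any $L$-module homomorphism is again a $J_{r-1}$-map (the $J_{r-1}$-maps forming a two-sided ideal in the endomorphism algebra).

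Recall that the definition of $\Psi$ splits into three ranges of the target degree: the lower exterior range (target degree $<0$), the interior range (target degrees $0,1,\dots,-n-1$), and the upper exterior range (target degree $\ge -n$). By Remark~\ref{phifbemerkung}, $\Psi(f_1,\dots,f_r)$ is a $J_{r-1}$-map in degree $k$ whenever $k<0$ or $k\ge -n$, and likewise $\Psi(g_1,\dots,g_r)$. Hence if $k<0$ or $k\ge -n$, the outer factor $\Psi(f_1,\dots,f_r)$ is already a $J_{r-1}$-map and I am done. The only remaining possibility is $0\le k\le -n-1$, equivalently $n\le k+n\le -1$; in that case the target degree $k+n$ of the inner factor $\Psi(g_1,\dots,g_r)$ is negative, so that factor lies in its lower exterior range and is a $J_{r-1}$-map. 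Either way the composite is a $J_{r-1}$-map.

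I expect no genuine obstacle here: the one point that requires a moment's thought — and is really the entire content of the proof — is the observation that, precisely when $\Psi(f_1,\dots,f_r)$ is forced into its interior range at level $k$, the shifted degree $k+n$ is automatically negative, which throws $\Psi(g_1,\dots,g_r)$ into an exterior range. This is immediate from the inequalities $0\le k\le -n-1$. Both supporting facts (Remark~\ref{phifbemerkung} and the ideal property of $J$-maps) are already available, so once the three degree ranges of the two factors are matched up against one another, the argument closes.
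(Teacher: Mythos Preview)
Your argument is correct and is essentially the same as the paper's: both proofs observe that in every degree $k$ at least one of the two factors lies in an exterior range of its respective $\Psi$, hence is a $J_{r-1}$-map by Remark~\ref{phifbemerkung}, and then use that $J_{r-1}$-maps compose on either side to $J_{r-1}$-maps. The paper presents this via a diagram with dashed arrows; you present it via the inequalities $0\le k\le -n-1\Rightarrow k+n<0$, which is exactly the same case split.
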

\beweis
Let us write $f=\Psi(f_1,f_2,\dots,f_r)$, $g=\Psi(g_1,g_2,\dots,g_r)$, $m=|f|$ and $n=|g|$. Consider the composition $f\circ g$:
\[ \xymatrix{
\dots & \hat{P}_{n-2} \ar[l]\ar@{-->}[d] & \hat{P}_{n-1} \ar[l]\ar@{-->}[d] & \hat{P}_{n} \ar[l]\ar[d] & \hat{P}_{n+1} \ar[l]\ar[d] & \dots\ar[l] & \ar@{}[d]|{\displaystyle f}  \\
\dots & \hat{P}_{-2} \ar[l]\ar[d] & \hat{P}_{-1} \ar[l]\ar[d] & \hat{P}_{0} \ar[l]\ar@{-->}[d] & \hat{P}_{1} \ar[l]\ar@{-->}[d] & \dots\ar[l] & \ar@{}[d]|{\displaystyle g}  \\
\dots & \hat{P}_{-m-2} \ar[l] & \hat{P}_{-m-1} \ar[l] & \hat{P}_{-m} \ar[l] & \hat{P}_{-m+1} \ar[l] & \dots\ar[l] & }
 \]
By definition, the dashed arrows are $J_{r-1}$-maps. Therefore, the composition is a $J_{r-1}$-map in every degree. \qed
\begin{lemma} \label{psiphikombo} Let $f_i\in A^\vdx{i}$ be of negative degree $n_i$ for all $i=1,2,\dots,r$, and let $g\in A^\vdx{j}$ be of non-negative degree $m$. Write $n=|\Psi(f_1,\dots,f_r)|$. 
\begin{itemize}
\item[(i)] If $|g\circ f_j|<0$, then
\[ \Phi(g)\circ \Psi(f_1,f_2,\dots,f_r)-(-1)^{m(n_1+\dots+n_{j-1})} \Psi(f_1,f_2,\dots,g\circ f_j,\dots,f_r) \]
is a $J_{r-1}$-map. If $|g\circ f_j|\geq 0$, then
$\Phi(g)\circ \Psi(f_1,f_2,\dots,f_r)$ is a $J_{r-1}$-map.
\item[(ii)] If $|f_j\circ g|<0$, then
\[ \Psi(f_1,f_2,\dots,f_r)\circ\Phi(g) -(-1)^{m(n+n_1+\dots+n_{j-1})} \Psi(f_1,f_2,\dots,f_j\circ g,\dots,f_r) \]
is a $J_{r-1}$-map. If $|f_j\circ g|\geq 0$, then
$\Psi(f_1,f_2,\dots,f_r)\circ \Phi(g)$ is a $J_{r-1}$-map.
\end{itemize}
\end{lemma}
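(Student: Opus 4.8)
The plan is to compute each of the two composites degree by degree; the decisive simplification is Remark~\ref{phifbemerkung}, which says that in the \emph{exterior} degree ranges the relevant pieces of $\Phi$ and of $\Psi$ are already $J_{r-1}$-maps, so only one \emph{interior} range will need real attention. Recall that, writing $h\in A$ as a family $h_k\colon\hat{P}_{k+|h|}\to\hat{P}_k$, composition in $A$ reads $(h\circ h')_k=h_k\circ h'_{k+|h|}$; thus for (i) we study $\Phi(g)_k\circ\Psi(f_1,\dots,f_r)_{k+m}\colon\hat{P}_{k+m+n}\to\hat{P}_k$ for each $k$, and for (ii) the maps $\Psi(f_1,\dots,f_r)_k\circ\Phi(g)_{k+n}$. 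Since the $J_{r-1}$-maps form a two-sided ideal of the endomorphism algebra, it is enough to show in each degree that the composite differs from the corresponding component of $\Psi(f_1,\dots,g\circ f_j,\dots,f_r)$ (resp.\ $\Psi(f_1,\dots,f_j\circ g,\dots,f_r)$), up to the asserted scalar, by a $J_{r-1}$-map.

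For part (i): when $-m\le k\le-1$ the factor $\Phi(g)_k$ is one of the middle pieces of $\Phi(g)$ and hence (Remark~\ref{phifbemerkung}) a $J_{r-1}$-map; when $k+m\notin\{0,1,\dots,-n-1\}$ the factor $\Psi(f_1,\dots,f_r)_{k+m}$ is one of the outer pieces, again a $J_{r-1}$-map; and the corresponding component of $\Psi(f_1,\dots,g\circ f_j,\dots,f_r)$ is likewise a $J_{r-1}$-map outside its own interior range. So all content sits in $0\le k\le-(n+m)-1$, where $k\ge0$ (so $\Phi(g)_k$ is the degree-$\ge0$ piece, which simply inserts $g$ in the $j$-th tensor slot) and $0\le k+m\le-n-1$ (so $\Psi(f_1,\dots,f_r)_{k+m}$ is the interior piece, applying $f_i$ in slot $i$); note that in this range $k+m+n\le-1$, so a source basis element $(a^1,\dots,a^r)$ has $\alpha_i:=|a^i|<0$ for all $i$. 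One then checks that the composite carries $(a^1,\dots,a^r)$ to a scalar multiple of $(f_1a^1,\dots,(g\circ f_j)a^j,\dots,f_ra^r)$, with this being nonzero precisely when $\alpha_i\ge n_i$ for $i\ne j$ and $\alpha_j\ge|g\circ f_j|=n_j+m$; this is exactly the support of $\Psi(f_1,\dots,g\circ f_j,\dots,f_r)$ in degree $k$, so the two maps have identical support. Comparing the two Koszul signs accumulated along the composite — one from $\Psi$'s interior sign (which depends on $n$), one from $\Phi(g)$'s sign (contributing $(-1)^{m\sum_{i<j}(\alpha_i-n_i)}$) — against the sign of $\Psi(f_1,\dots,g\circ f_j,\dots,f_r)$ (degree $n+m$, with the $j$-th inner degree raised by $m$) produces the scalar $(-1)^{m(n_1+\dots+n_{j-1})}$. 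Finally, if $|g\circ f_j|\ge0$ the support condition $\alpha_j\ge|g\circ f_j|\ge0$ contradicts $\alpha_j<0$, so the interior contribution vanishes outright and $\Phi(g)\circ\Psi(f_1,\dots,f_r)$ is a $J_{r-1}$-map.

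Part (ii) runs in parallel: the same range analysis isolates $0\le k\le-(n+m)-1$, where now $k+n\le-m-1<-m$ puts $\Phi(g)_{k+n}$ into its most negative piece — the one carrying the extra summand $j-1$ in the sign exponent — while $\Psi(f_1,\dots,f_r)_k$ is interior. The supports again coincide with those of $\Psi(f_1,\dots,f_j\circ g,\dots,f_r)$, and the sign bookkeeping — combining that extra $j-1$ with the global degree $n=|\Psi(f_1,\dots,f_r)|$ that enters $\Psi$'s interior sign — delivers the scalar $(-1)^{m(n+n_1+\dots+n_{j-1})}$. As before, $|f_j\circ g|\ge0$ forces the interior contribution to vanish.

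\emph{Expected main obstacle.} The only genuinely delicate point is the sign computation: one has to propagate all the Koszul signs from the definitions of $\Phi$ and of $\Psi$, and from the composition law, and confirm that the net discrepancy is exactly the stated power of $-1$. Everything else is routine range-chasing — Remark~\ref{phifbemerkung} collapses the problem to the single interior range, and there the supports and target modules of the composite and of the appropriate $\Psi$ match on the nose, so no additional homological-algebra input is required.
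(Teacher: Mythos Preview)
Your argument is correct and follows the same route as the paper: reduce via Remark~\ref{phifbemerkung} to the single interior range, compare supports there, and leave the Koszul sign check as a routine verification. The only cosmetic difference is that the paper splits into the cases $m+n\ge 0$ and $m+n<0$ (with diagrams), whereas your unified range $0\le k\le -(n+m)-1$ handles both at once, being empty in the first case.
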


\beweis
We will show (i); the proof of (ii) is similar. Let us write $\psi=\Psi(f_1,f_2,\dots,f_r)$, $n=|\psi|<0$ and $\Phi_g=\Phi(g)$. Suppose $m+n\geq 0$. Then we are in the following situation:
\[
\xymatrix@1@C=12pt{
\dots\ar@{-->}[d]  & \hat{P}_{n-1} \ar[l]\ar@{-->}[d] &  \hat{P}_n \ar[l]\ar[d] & 
\dots \ar@{}[d]|{\displaystyle\psi}\ar[l] & \hat{P}_{-1} \ar[l]\ar[d] & \hat{P}_0 \ar[l]\ar@{-->}[d] &
\dots \ar[l]\ar@{-->}[d] & \hat{P}_{m+n-1} \ar[l]\ar@{-->}[d] & \hat{P}_{m+n} \ar[l]\ar@{-->}[d] & \dots \ar[l]\ar@{-->}[d]  \\
\dots  & \hat{P}_{-1} \ar[l]\ar[d] &  \hat{P}_0 \ar[l]\ar@{-->}[d] & 
\dots \ar[l]\ar@{-->}[d] & \hat{P}_{-n-1} \ar[l]\ar@{-->}[d] & \hat{P}_{-n} \ar[l]\ar@{-->}[d] &
\dots \ar[l]\ar@{-->}[d] & \hat{P}_{m-1} \ar[l]\ar@{-->}[d] & \hat{P}_{m} \ar[l]\ar[d] & \dots \ar[l] \ar@{}[d]|{\displaystyle\Phi_g} \\
\dots  & \hat{P}_{-m-1} \ar[l] &  \hat{P}_{-m} \ar[l] & 
\dots \ar[l] & \hat{P}_{-n-m-1} \ar[l] & \hat{P}_{-n-m} \ar[l] &
\dots \ar[l] & \hat{P}_{-1} \ar[l] & \hat{P}_{0} \ar[l] & \dots \ar[l]  
} \]
The dashed arrows are $J_{r-1}$-maps, and hence so is the composition $\Phi_g\circ \psi$.
In the case $m+n<0$, we have the following diagram:
\[
\xymatrix@1@C=12pt{
\dots\ar@{-->}[d]  & \hat{P}_{n-1} \ar[l]\ar@{-->}[d] &  \hat{P}_n \ar[l]\ar[d] & 
\dots \ar@{}[d]|{\displaystyle\psi}\ar[l] & \hat{P}_{m+n-1} \ar[l]\ar[d] & \hat{P}_{m+n} \ar[l]\ar[d] &
\dots \ar[l] & \hat{P}_{-1} \ar[l]\ar[d] & \hat{P}_{0} \ar[l]\ar@{-->}[d] & \dots \ar[l]\ar@{-->}[d]  \\
\dots  & \hat{P}_{-1} \ar[l]\ar[d] &  \hat{P}_0 \ar[l]\ar@{-->}[d] & 
\dots \ar[l]\ar@{-->}[d] & \hat{P}_{m-1} \ar[l]\ar@{-->}[d] & \hat{P}_{m} \ar[l]\ar[d] &
\dots \ar[l] & \hat{P}_{-n-1} \ar[l]\ar[d] & \hat{P}_{-n} \ar[l]\ar[d] & \dots \ar[l] \ar@{}[d]|{\displaystyle\Phi_g} \\
\dots  & \hat{P}_{-m-1} \ar[l] &  \hat{P}_{-m} \ar[l] & 
\dots \ar[l] & \hat{P}_{-1} \ar[l] & \hat{P}_{0} \ar[l] &
\dots \ar[l] & \hat{P}_{-n-m-1} \ar[l] & \hat{P}_{-n-m} \ar[l] & \dots \ar[l]  
} \]
Again the dashed arrows are $J_{r-1}$-maps; in the remaining cases the composition is given by 
\begin{eqnarray}
(a^\idx{1},\dots, a^\idx{r}) &\stackrel{\psi}{\mapsto}&\begin{cases} \pm (f_1(a^\idx{1}),\dots, f_r(a^\idx{r})) & \text{if $|a^\idx{i}|\geq n_i$ for all $i$} \\ 0 & \text{otherwise} \end{cases} \nonumber \\ &\stackrel{\Phi_g}{\mapsto}&\begin{cases} \pm (f_1(a^\idx{1}),\dots, g(f_j(a^\adx{j})),\dots, f_r(a^\idx{r})) & \begin{minipage}[t]{4cm} if $|a^\idx{i}|\geq n_i$ for all $i$ \\ \hspace*{0.1cm} and $|a^\adx{j}|\geq m+n_j$ \end{minipage} \\ 0 & \text{otherwise.} \end{cases} \label{phipsi1}
\end{eqnarray}
Here, the signs can be deduced from the definition of $\Phi$ and $\Psi$. Now consider the two cases given in (i). If $|g\circ f_j|<0$, then we have $m+n<0$ and we are in the situation of the second diagram. The maps $\Phi_g\circ \psi$ and $\Psi(f_1,f_2,\dots,g\circ f_j,\dots,f_r)$ agree in the range described by \eqref{phipsi1}. Outside this range, they are both $J_{r-1}$-maps. Therefore, their difference must be a $J_{r-1}$-map everywhere, which proves the proposition in this case.

Now suppose $|g\circ f_j|\geq 0$, i.e.~$m+n_j\geq 0$. If $m+n\geq 0$, then the first diagram applies; as we have seen already, the composition $\Phi_g\circ \psi$ is a $J_{r-1}$-map. We are left with the case $m+n<0$. Then the first case in \eqref{phipsi1} cannot occur (because $|a^\adx{j}|<0$); hence, the formula \eqref{phipsi1} always gives $0$. Then the second diagram shows that the composition is a $J_{r-1}$-map.\qed

\subsection{Abelian $p$-groups}
In this section we consider arbitrary (finite) abelian $p$-groups. Let $r\geq 2$ and
\[ G=\prod_{i=1}^r \mathbb{Z}/m_i\mathbb{Z} \qquad \text{where $m_i=p^{p_i},\; p_i\geq 1$ for $i=1,2,\dots,r$}. \]
Let $G_i=\mathbb{Z}/m_i\mathbb{Z}$. As in §~\ref{szyklischegruppen}, we have a complete projective resolution $\hat{P}^i_*$ of the trivial $kG_i$-module $k$ together with cocycles $\Bar{x}_i$ and $\Bar{y}_i$ of the endomorphism dga $A^\vdx{i}$. These cocycles satisfy $\Bar{x}_i^2=\Bar{y}_i$ if $m_i=2$, and if $m_i\geq 3$ we know that $\Bar{x}_i^2$ is null-homotopic via $\Bar{q}_i$. 

Using the construction of §~\ref{allgemeinetheorie} we get a complete resolution $\hat{P}_*$ of $k$ as a trivial $kG$-module. This resolution is not as complicated as it looks. Let us give a more direct description of $\hat{P}_*$. Note that the isomorphism
\begin{equation} \label{algebreniso1} kG_1\otimes\dots\otimes kG_r\cong kG, \end{equation}
 is compatible with the isomorphisms $kG_i\cong k[z_i]/(z_i^{m_i})$ and $kG\cong k[z_1,\dots,z_r]/(z_i^{m_i})_i$. This allows us to consider the elements of $kG_i$ as elements of $kG$. For every $\alpha\in M_n$ (with $n\geq 0$) we have a direct summand 
\[ \hat{P}^1_{\alpha_j}\otimes\dots\otimes \hat{P}^r_{\alpha_r}=kG_1\otimes\dots\otimes kG_r\cong kG \]
of $\hat{P}_n$; as in §~\ref{sfallzr}, we denote by $(\alpha)$ the corresponding $kG$-basis element of $\hat{P}_n$. Then we have isomorphisms $\hat{P}_n\cong \bigoplus_{\alpha\in M_n} kG(\alpha)$, which will be suppressed in our notation. Similarly one has $\hat{P}_{n}\cong \bigoplus_{\alpha\in N_n} kG(\alpha)$ for $n<0$. Using these isomorphisms, we consider $\hat{P}_*$ as a complex
\[ \dots\longleftarrow kG^{N_{-3}}\longleftarrow kG^{N_{-2}}\longleftarrow kG^{N_{-1}}\longleftarrow 
 kG^{M_0}\longleftarrow kG^{M_1}\longleftarrow kG^{M_2}\longleftarrow \dots \]
Now we are going to describe the differential. To do so, we need some more notation. If $\alpha\in M_n$ and $\beta\in M_m$ are such that $\alpha_i\geq \beta_i$ for all indices $i$, we have $\alpha-\beta\in M_{n-m}$ and $(\alpha-\beta)$ is some basis element of $\hat{P}_{n-m}$. On the other hand, if $\alpha_i<\beta_i$ for some index $i$, we define 
\[ (\alpha-\beta)=0\in\hat{P}_{n-m}. \]
As in §~\ref{sprojauflukozykel}, the generators of $\hat{P}_0$ and $\hat{P}_{-1}$ will be denoted by $(\svec{0})$ and $(\svec{-1})$, respectively.

For $n\in\mathbb{Z}$ and $i=1,2,\dots,r$ we define $[n]_i$ as follows:
\[ [n]_i=\begin{cases} 1 & \text{if $n$ is odd} \\ m_i-1 & \text{if $n$ is even} \end{cases} \]
Whenever the index $i$ is clear from context, we will drop it from the notation; in particular in the case of $[n]$ occurring in the exponent of $z_i$, or the argument of $[\cdot]$ being the $i$-th component of some multi-index, e.g.~$[\alpha_i+1]$. With this notation, the complex $\hat{P}^i$ is given by
\[ \dots\xleftarrow{-z_i^{[-2]}}\hat{P}^i_{-2}\xleftarrow{z_i^{[-1]}}\hat{P}^i_{-1}
 \xleftarrow{-z_i^{[0]}}\hat{P}^i_0\xleftarrow{z_i^{[1]}}\hat{P}^i_1\xleftarrow{-z_i^{[2]}}\dots
 \]
We are now able to describe the differential $\partial:\hat{P}_{k+1}\longrightarrow \hat{P}_k$ as follows:
\begin{align*}
(\alpha)&\mapsto \sum_{i=1}^r \pm z_i^{[\alpha_i]} (\alpha-\varepsilon^i) &&\text{for $k\geq 0$} \\
(\svec{0})&\mapsto z_1^{[0]}z_2^{[0]}\dots z_r^{[0]} (\svec{-1}) &&\text{for $k=-1$} \\
(\alpha)&\mapsto \sum_{i=1}^r \pm z_i^{[\alpha_i]} (\alpha+\varepsilon^i) &&\text{for $k<0$}
\end{align*}
We have the augmentation ideal \[ I=\ker\varepsilon=\ker\partial_0=\left<z_1,z_2,\dots,z_r\right>_{kG}\subseteq kG. \]
Note that $\im \partial_n\subset I\cdot P_{n-1}$,
which implies that $\Hom_{kG}(\partial,k)=0$; hence
\[ \Tate^n_{kG}(k,k)\cong \begin{cases} k^{M_n} & \text{for $n\geq 0$,} \\ k^{N_n} & \text{for $n<0$.} \end{cases} \]

\subsection{The lifting property}\label{eliftproperty}
In this section we generalise the result of §~\ref{sliftungseigen1} as follows:
\begin{satz} \label{flemma2}
Suppose we are given a cocycle $f:\hat{P}[n]\longrightarrow\hat{P}$ of non-positive degree which is a $J_2$-map. Then there is a null-homotopy $h$ of $f$. Furthermore, this $h$ can be chosen to be an $I$-map.
\end{satz}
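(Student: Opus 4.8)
The plan is to imitate, step for step, the proof of Theorem~\ref{fsatz}, with the ideal $I^2$ (which coincided with $J_2$ in the situation of �~\ref{sliftungseigen1}) replaced throughout by $J_2$. Thus the first task is to prove the analogue of Proposition~\ref{llemma}: \emph{if $g\colon\hat P_a\to\hat P_b$ is a $J_2$-map with $\partial\circ g=0$ and $b\ge 0$, then $g=\partial\circ h$ for some $I$-map $h\colon\hat P_a\to\hat P_{b+1}$; and dually, if $g\colon\hat P_a\to\hat P_b$ is a $J_2$-map with $g\circ\partial=0$ and $a<0$, then $g=h\circ\partial$ for some $I$-map $h\colon\hat P_{a-1}\to\hat P_b$.}

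For the first half one obtains \emph{some} $h$ with $g=\partial\circ h$ because $\hat P_b$ is projective, and the real content is that any such $h$ is automatically an $I$-map. This is also where the argument must depart from the proof of Proposition~\ref{llemma}: the differential of $\hat P$ now has entries $z_i^{[\alpha_i]}$, so it is merely an $I$-map and no longer a $J_1$-map, and the ``split off the constant part'' device of Proposition~\ref{llemma} no longer closes. Instead I would show directly that $\partial$ separates $I\hat P_{b+1}$ from its complement: if $v=\sum_\alpha c_\alpha(\alpha)\in\hat P_{b+1}$ has $c_{\alpha_0}\notin I$, pick $i$ with $(\alpha_0)_i\ge 1$ and consider the coefficient of $(\alpha_0-\varepsilon^i)$ in $\partial v$; its image under the projection of $kG$ onto the $k$-span of the pure powers $1,z_i,\dots,z_i^{m_i-1}$ of the single variable $z_i$ is $\pm\lambda_0 z_i^{[(\alpha_0)_i]}+(\text{higher powers of }z_i)$, with $\lambda_0$ the (nonzero) constant term of $c_{\alpha_0}$, hence nonzero; but $J_2$ — being, for $r\ge 2$, spanned by monomials with at least two maximal exponents — projects to $0$ under that map. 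So $\partial v\notin J_2\hat P_b$, and no lift of a $J_2$-map can fail to be an $I$-map. The dual half is the transpose of this computation, with the lift $h$ produced first by using that $kG$ is self-injective, so $\hat P_b$ is injective.

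With this lemma in hand the construction proceeds exactly as in the proof of Theorem~\ref{fsatz}: put $h_0=0$; observe $\partial_0\circ f_0=0$ since $\partial_0$ is multiplication by the norm element $\norm=z_1^{m_1-1}\cdots z_r^{m_r-1}$ and $\norm\cdot J_2=0$; lift $f_0$ to an $I$-map $h_1$; inductively lift $g_j=f_j-h_j\circ\partial$, which is killed by $\partial$ by the usual telescoping of $df=0$ with the relations $f_{j-1}=\partial h_j+h_{j-1}\partial$; and at the end descend through the negative degrees with the dual half of the lemma. This yields the desired $I$-map null-homotopy of $f$.

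The step I expect to be the main obstacle is ensuring that the one-step lemma actually applies at every stage of the induction. In �~\ref{sliftungseigen1} the passage went smoothly because $\partial$ was an $I$-map and $I^2=J_2$, so $h_j\circ\partial$ was again a $J_2$-map; here $h_j\circ\partial$ is a priori only an $I^2$-map, and the example $\partial_2(2,0,\dots,0)=\pm z_1^{m_1-1}(\varepsilon^1)$ (with $m_1\ge 3$, so $z_1^{m_1-1}\in I^2$ while $(2,0,\dots,0)\notin I\hat P_2$) shows that a lift through $\partial$ of a mere $I^2$-map need not be an $I$-map — one cannot simply weaken the hypothesis of the lemma. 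One therefore has to carry a stronger inductive invariant, arranging the $h_j$ so that $h_j\circ\partial$ still takes values in $J_2\hat P$, equivalently so that each $g_j$ remains a $J_2$-map; it is in verifying that this can always be done, using the cocycle condition $df=0$ and the precise shape of $\partial$, that the bulk of the work lies. Once this bookkeeping is set up, the remainder is routine.
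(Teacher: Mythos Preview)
Your one-step lemma --- that any lift through $\partial$ (for $b\ge 0$) of a $J_2$-map is automatically an $I$-map --- is correct, and your pure-$z_i$-projection argument for it is sound. You have also correctly located the gap: once past the base case, $g_j=f_j-(-1)^n h_j\partial$ is only an $I^2$-map, your lemma no longer applies, and you give no mechanism for keeping $h_j\partial$ inside $J_2\hat P$. You propose to ``carry a stronger inductive invariant'' but do not say what it is, and in fact the class of $J_2$-maps is not stable under $h\mapsto h\partial$ for $h$ an arbitrary $I$-map; this route, as stated, does not close.

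The paper resolves this not by tightening the condition on $h$ but by \emph{relaxing} the condition on $g$ while simultaneously \emph{sharpening} the condition on $h$, so that the loop does close. For each pair of multi-indices $\alpha,\beta$ one introduces position-dependent sets of monomials
\[
\mathcal I_{\alpha,\beta}=\Bigl\{\textstyle\prod_{s\in S} z_s^{[\alpha_s+1]-[\beta_s+1]}\Bigr\},\qquad
\mathcal J_{\alpha,\beta}=\bigl\{z_j^{\nu_j} z : z\in\mathcal I_{\alpha,\beta},\ \nu_j\in\{[\alpha_j+1],[\beta_j]\},\ z_j\nmid z\bigr\},
\]
and splits $\Hom_L(\hat P_{n+k},\hat P_k)=A_k\oplus B_k$ and $\Hom_L(\hat P_{n+k-1},\hat P_k)=C_k\oplus D_k$ according to whether the matrix entry at position $(\alpha,\beta)$ avoids or lies in $\langle\mathcal J_{\alpha,\beta}\rangle_k$ (resp.\ $\langle\mathcal I_{\alpha,\beta}\rangle_k$). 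One checks that every $J_2$-map lies in $A_k$ (a monomial in $\mathcal J_{\alpha,\beta}$ has at most one variable with maximal exponent, whereas a $J_2$-monomial has at least two), and that $C_k$ consists of $I$-maps (since $1\in\mathcal I_{\alpha,\beta}$). The heart of the argument is the combinatorial verification that both $\partial_*$ and $\partial^*$ \emph{respect} these direct-sum decompositions (with the expected exception at the norm differential $\partial_0$, where $C$ is sent to $0$). Granting this, the inductive invariant is $h_k\in C_k$: then $h_k\partial\in A_k$, hence $g_k\in A_k$, and any lift of $g_k$ projected onto $C_{k+1}$ is still a lift of $g_k$; the induction runs exactly as in Theorem~\ref{fsatz}.

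So the missing ingredient is precisely the definition of the intermediate classes $A_k$, $C_k$ and the proof that $\partial$ preserves them --- an enlargement of ``$J_2$-map'' and a refinement of ``$I$-map'' tailored so that the bookkeeping you anticipated actually goes through.
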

Note that for every $m$, $J_m$ is the ideal generated by all the products of $m$ monomials of the form $z_i^{m_i-1}=z_i^{[0]}$.

Due to the more complicated form of the differentials, the proof of the theorem will be more elaborate than before. The \textit{idea} is as follows. Let $n$ be a given degree. We will construct splittings $\Hom_L(\hat{P}_{k+n},\hat{P}_k)\cong A_k\oplus B_k$ and $\Hom_L(\hat{P}_{k+n-1},\hat{P}_k)\cong C_k\oplus D_k$ as $k$-vector spaces in such a way, that the two maps
\begin{eqnarray*}
 \partial^*:\Hom_L(\hat{P}_{k+n-1},\hat{P}_k) &\longrightarrow& \Hom_L(\hat{P}_{k+n},\hat{P}_k) \\
 \phi&\mapsto& \phi\circ \partial \\
 \partial_*:\Hom_L(\hat{P}_{k+n},\hat{P}_{k+1}) &\longrightarrow& \Hom_L(\hat{P}_{k+n},\hat{P}_k) \\
 \phi&\mapsto& \partial\circ \phi 
\end{eqnarray*}
both respect the decompositions. Furthermore, this is done in such a way that $J_2$-maps of degree $n$ belong to $A_k$, and elements of $C_k$ are $I$-maps.

Given a $J_2$-map $f$ and a null-homotopy $h$ for $f$, we have
\[ f_k=\partial_* h_{k+1} - (-1)^{|h|} \partial^* h_k \]
This equation must still hold after restriction to $A_k$. Therefore, we can erase the $D_k$-part of $h$ to obtain an $I$-map, which is still a null-homotopy for $f$. This is the main idea. Nevertheless, the somewhat special $\partial:\hat{P}_0\longrightarrow \hat{P}_{-1}$ will make the proof more complicated.\bigskip

\beweis[ of Theorem~\ref{flemma2}]
\newcommand{\monobasis}{\mathcal{M}}
The $k$-vector space $kG$ has a basis $\monobasis$ consisting of all the monomials in the variables $z_1,\dots,z_r$. Define for all multi-indices $\alpha,\beta\in\mathbb{Z}^r$
\begin{align*} \mathcal{I}_{\alpha,\beta}&=\left\{ \prod_{s\in S} z_s^{[\alpha_s+1]-[\beta_s+1]} \,\Bigl\vert\, S\subseteq\{1,2,\dots,r\}\;\text{with $[\alpha_s+1]\geq [\beta_s+1]$ for all $s\in S$} \right\} 
\intertext{and}
 \mathcal{J}_{\alpha,\beta}&=\left\{ z_j^{\nu_j} z  \mid z\in\mathcal{I}_{\alpha,\beta},\;\text{$\nu_j\in\{[\alpha_j+1],[\beta_j]\}$ and $z_j$ is not a factor of $z$} \right\}. 
\end{align*}
Write $\tilde{\mathcal{I}}_{\alpha,\beta}=\monobasis\setminus\mathcal{I}_{\alpha,\beta}$ and $\tilde{\mathcal{J}}_{\alpha,\beta}=\monobasis\setminus \mathcal{J}_{\alpha,\beta}$. 
Every element $g\in\Hom_L(\hat{P}_{p},\hat{P}_{q})$ can be uniquely written as
\[ (\beta) \mapsto \sum_{\alpha} g_{\alpha,\beta}(\alpha). \]
Here $g_{\alpha,\beta}\in L$ are the coefficients of the matrix representation of $g$ with respect to our chosen bases. Let $n<0$. For a set $\mathcal{S}$, we write $\left<\mathcal{S}\right>_k$ for the $k$-vector space generated by the elements of $\mathcal{S}$. For all $k\in\mathbb{Z}$ define
\begin{align*}
 A_k&=\left\{ f\in\Hom_L(\hat{P}_{n+k},\hat{P}_k) \mid f_{\alpha,\beta}\in \bigl<\tilde{\mathcal{J}}_{\alpha,\beta}\bigr>_k\;\text{for all $\alpha,\beta$} \right\}, \\
 B_k&=\left\{ f\in\Hom_L(\hat{P}_{n+k},\hat{P}_k) \mid f_{\alpha,\beta}\in \bigl<\mathcal{J}_{\alpha,\beta}\bigr>_k\;\text{for all $\alpha,\beta$} \right\},  \\
 C_k&=\left\{ h\in\Hom_L(\hat{P}_{n+k-1},\hat{P}_k) \mid h_{\alpha,\beta}\in \bigl<\tilde{\mathcal{I}}_{\alpha,\beta}\bigr>_k\;\text{for all $\alpha,\beta$} \right\}, \\
 D_k&=\left\{ h\in\Hom_L(\hat{P}_{n+k-1},\hat{P}_k) \mid h_{\alpha,\beta}\in \bigl<\mathcal{I}_{\alpha,\beta}\bigr>_k\;\text{for all $\alpha,\beta$} \right\}. 
\end{align*}
That is, for $h\in C_k$ we require the matrix coefficients not to contain certain monomials in the $z_i$'s, namely those of $\mathcal{I}$. This set depends on the position in the matrix. Anyway, the monomial~$1$ is contained in $\mathcal{I}$; therefore all elements of $C_k$ are $I$-maps.\bigskip

We have decompositions of $k$-vector spaces
\[ \Hom_L(\hat{P}_{n+k},\hat{P}_k)=A_k\oplus B_k\quad\text{and}\quad
\Hom_L(\hat{P}_{n+k-1},\hat{P}_k)=C_k\oplus D_k. \]

Now one has the following fundamental
\begin{lemma} \label{partialzerlegung} The map
\[ \partial^*:C_k\oplus D_k=\Hom_L(\hat{P}_{k+n-1},\hat{P}_k)\longrightarrow \Hom_L(\hat{P}_{k+n},\hat{P}_k)=A_k\oplus B_k \]
respects the decomposition for $k\neq -n$. For $k=-n$ it maps $C_k$ to $0$.
The map
\[ \partial_*:C_{k+1}\oplus D_{k+1}=\Hom_L(\hat{P}_{k+n},\hat{P}_{k+1})\longrightarrow \Hom_L(\hat{P}_{k+n},\hat{P}_k)=A_k\oplus B_k \]
respects the decomposition for $k\neq -1$. For $k=-1$ it maps $C_{k+1}$ to $0$.
\end{lemma}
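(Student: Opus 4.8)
The plan is to reduce the assertion to a purely combinatorial fact about monomials in $L=k[z_1,\dots,z_r]/(z_i^{m_i})$, and to dispose of the two exceptional degrees by a separate elementary argument. Write every $L$-homomorphism $\phi$ between two of the modules $\hat P_j$ in matrix form, $\phi(\beta)=\sum_\alpha \phi_{\alpha,\beta}(\alpha)$ with coefficients $\phi_{\alpha,\beta}\in L$. First I would read off, from the explicit formula for $\partial$, how $\partial^{*}$ and $\partial_{*}$ act on these coefficients. Away from the exceptional differential $\partial_0\colon\hat P_0\to\hat P_{-1}$ one finds, in the positive range,
\[ (\phi\circ\partial)_{\alpha,\gamma}=\sum_{i=1}^r \pm\, z_i^{[\gamma_i]}\,\phi_{\alpha,\gamma-\varepsilon^i}, \qquad (\partial\circ\phi)_{\alpha,\gamma}=\sum_{i=1}^r \pm\, z_i^{[\alpha_i+1]}\,\phi_{\alpha+\varepsilon^i,\gamma}, \]
and, in the negative range, the same formulas with $\gamma-\varepsilon^i$ and $\alpha+\varepsilon^i$ replaced by $\gamma+\varepsilon^i$ and $\alpha-\varepsilon^i$. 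In other words each of $\partial^{*}$, $\partial_{*}$ is a signed sum over $i$ of operations of the form ``shift one of the two index labels by a single $\varepsilon^i$ and multiply the corresponding matrix coefficient by $z_i^{[0]}$ or $z_i^{[1]}$'', the power and sign being dictated by the parity of the $i$-th coordinate of the shifted label. The exceptional $\partial_0$, which is multiplication by the norm $\norm=z_1^{[0]}\cdots z_r^{[0]}$, enters $\partial^{*}$ exactly when $\hat P_{k+n}=\hat P_0$, i.e.\ $k=-n$, and enters $\partial_{*}$ exactly when $\hat P_{k+1}=\hat P_0$, i.e.\ $k=-1$.

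Since $A_k,B_k$ and $C_k,D_k$ are defined \emph{entrywise} --- the coefficient in position $(\alpha,\beta)$ being required to lie in a prescribed $k$-span of monomials, namely one of $\left<\tilde{\mathcal J}_{\alpha,\beta}\right>_k$, $\left<\mathcal J_{\alpha,\beta}\right>_k$, $\left<\tilde{\mathcal I}_{\alpha,\beta}\right>_k$, $\left<\mathcal I_{\alpha,\beta}\right>_k$ --- and since each summand above carries the coefficient in one position to a multiple of itself in a single other position, the statement that $\partial^{*}$ and $\partial_{*}$ respect $C\oplus D=A\oplus B$ (for $k\neq -n$, resp.\ $k\neq -1$) reduces to the following: for each $i$ and each pair $(\alpha,\beta)$, multiplication by the relevant power of $z_i$ maps $\left<\mathcal I_{\alpha,\beta}\right>_k$ into $\left<\mathcal J_{\alpha',\beta'}\right>_k$ and $\left<\tilde{\mathcal I}_{\alpha,\beta}\right>_k$ into $\left<\tilde{\mathcal J}_{\alpha',\beta'}\right>_k$, where $(\alpha',\beta')$ is obtained from $(\alpha,\beta)$ by adding or subtracting $\varepsilon^i$ in the slot that was shifted (so that the parity of the $i$-th coordinate of that label is reversed). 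I would prove both inclusions by a direct case analysis: take a generating monomial $z=\prod_{s\in S} z_s^{[\alpha_s+1]-[\beta_s+1]}\in\mathcal I_{\alpha,\beta}$ (respectively a monomial $z\notin\mathcal I_{\alpha,\beta}$) and split into the cases $i\in S$ and $i\notin S$, and inside each of these into the degenerate case $m_i=2$ (where $[\,\cdot\,]$ is identically $1$ for the index $i$, all exponents of $z_i$ are trivial, and the two sets coincide) and the cases $m_i\geq 3$ sorted by the parities of $\alpha_i$ and $\beta_i$. The relation $z_i^{m_i}=0$ in $L$ is what makes several otherwise-problematic monomials vanish, and the clause ``$z_j$ is not a factor of $z$'' in the definition of $\mathcal J_{\alpha,\beta}$ is what one matches up in the surviving cases.

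It remains to treat the two exceptional degrees. For $k=-n$ the map $\partial^{*}$ is precomposition with $\partial_0$, so $(\phi\circ\partial_0)_{\alpha,\svec{0}}=\norm\cdot\phi_{\alpha,\svec{-1}}$. Because $z_j^{m_j}=0$ for all $j$, the norm $\norm=\prod_j z_j^{m_j-1}$ annihilates every monomial other than the constant $1$; and since $S=\emptyset$ always contributes $1$ to $\mathcal I_{\alpha,\svec{-1}}$, the complement $\tilde{\mathcal I}_{\alpha,\svec{-1}}$ contains no unit. Hence $\norm\cdot\left<\tilde{\mathcal I}_{\alpha,\svec{-1}}\right>_k=0$, i.e.\ $\partial^{*}$ maps $C_{-n}$ to $0$. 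The case $k=-1$ of $\partial_{*}$ is dual: it is postcomposition with $\partial_0$, $(\partial_0\circ\phi)_{\svec{-1},\gamma}=\norm\cdot\phi_{\svec{0},\gamma}$, and again $1\notin\tilde{\mathcal I}_{\svec{0},\gamma}$ forces $\partial_{*}(C_0)=0$.

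The main obstacle is the middle step: conceptually nothing is deep, but the combinatorial claim needs a lengthy case distinction, and carrying along the signs, the parity-dependence of the exponents $[\,\cdot\,]$, and the several degree ranges (according to the signs of $k$ and of $k+n$) at the same time is where the real work --- and the real danger of a sign or a parity slip --- lies.
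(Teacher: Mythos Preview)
Your proposal is correct and follows essentially the same route as the paper: both reduce the assertion to entrywise monomial inclusions of the form $z_i^{[\beta_i]}\mathcal{I}_{\alpha,\beta-\varepsilon^i}\subset\mathcal{J}_{\alpha,\beta}$ and $z_i^{[\beta_i]}\tilde{\mathcal{I}}_{\alpha,\beta-\varepsilon^i}\subset\tilde{\mathcal{J}}_{\alpha,\beta}\cup\{0\}$ (and the analogous pair for $\partial_*$), and treat the exceptional degrees via $\norm\cdot\tilde{\mathcal{I}}_{\alpha,\beta}=0$. The one place where the paper is tidier is the monomial case analysis: rather than splitting by parities of $\alpha_i,\beta_i$ and by $m_i=2$ versus $m_i\geq 3$, the paper argues uniformly using the single inequality $[\beta_i]=m_i-[\beta_i+1]>[\alpha_i+1]-[\beta_i+1]$ to force $i\notin S$, and proves the complement-inclusion by contrapositive (if $z_i^{[\beta_i]}x\in\mathcal{J}_{\alpha,\beta}$ then $x\in\mathcal{I}_{\alpha,\beta-\varepsilon^i}$), which avoids the lengthy case distinction you anticipate.
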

\beweis
First we claim for all $i$ and for all multi-indices $\alpha,\beta$:
\begin{itemize}
\item[(i)] $z_i^{[\beta_i]} \mathcal{I}_{\alpha,\beta-\varepsilon^i} 
              \subset \mathcal{J}_{\alpha,\beta}$
\item[(ii)] $z_i^{[\beta_i]} \tilde{\mathcal{I}}_{\alpha,\beta-\varepsilon^i} 
               \subset \tilde{\mathcal{J}}_{\alpha,\beta}\cup \{0\}$
\item[(iii)] $z_1^{[0]}z_2^{[0]}\dots z_r^{[0]} \mathcal{\tilde{I}}_{\alpha,\beta}=0$
\end{itemize}
Ad (i): Let $z\in\mathcal{I}_{\alpha,\beta-\varepsilon^i}$, then $z$ is of the form
\[ z=\prod_{s\in S} z_s^{[\alpha_s+1]-[(\beta-\varepsilon^i)_s+1]} \]
for some index set $S$. If $i\notin S$, then $z\in\mathcal{I}_{\alpha,\beta}$ and hence $z_i^{[\beta_i]}z\in \mathcal{J}_{\alpha,\beta}$. On the other hand, if $i\in S$, then
\[ z_i^{[\beta_i]}z=z_i^{[\beta_i]} z_i^{[\alpha_i+1]-[\beta_i]} \prod_{s\in S\setminus \{i\}}  z_s^{[\alpha_s+1]-[\beta_s+1]} 
=z_i^{[\alpha_i+1]}\prod_{s\in S\setminus \{i\}} z_s^{[\alpha_s+1]-[\beta_s+1]}
\in \mathcal{J}_{\alpha,\beta}. \]

Ad (ii): The set $\tilde{\mathcal{I}}_{\alpha,\beta-\varepsilon^i}$ consists of monomials in the $z_i$'s. Therefore the set $z_i^{[\beta_i]}\tilde{\mathcal{I}}_{\alpha,\beta-\varepsilon^i}$ also consists of monomials (and possibly $0$). Thus it is enough to prove that for every monomial $x$ satisfying
$z_i^{[\beta_i]}x\in\mathcal{J}_{\alpha,\beta}$ we have $x\in\mathcal{I}_{\alpha,\beta-\varepsilon^i}$. From $z_i^{[\beta_i]}x\in\mathcal{J}_{\alpha,\beta}$ we get
\[ z_i^{[\beta_i]}x=z_j^{\nu}\prod_{s\in S} z_s^{[\alpha_s+1]-[\beta_s+1]} \]
for some index set $S$, $j\notin S$ and $\nu\in\{[\alpha_j+1],[\beta_j]\}$. Since 
\[ [\beta_i]=m_i-[\beta_i+1]>[\alpha_i+1]-[\beta_i+1], \]
$i$ cannot belong to $S$; hence $i=j$.
If $\nu=[\beta_i]$ then
\[ x=\prod_{s\in S} z_s^{[\alpha_s+1]-[\beta_s+1]} \in\mathcal{I}_{\alpha,\beta-\varepsilon^i}. \]
If $\nu=[\alpha_i+1]$ then in particular $[\alpha_i+1]\geq [\beta_i]$ and
\[ x=z_i^{[\alpha_i+1]-[\beta_i]} \prod_{s\in S} z_s^{[\alpha_s+1]-[\beta_s+1]} \in\mathcal{I}_{\alpha,\beta-\varepsilon^i}. \]
Ad (iii): We have $1\in\mathcal{I}_{\alpha,\beta}$ for all $\alpha,\beta$. Hence
$1\notin\tilde{\mathcal{I}}_{\alpha,\beta}$. But all the other monomials in the $z_i$'s vanish when multiplied with $z_1^{[0]}z_2^{[0]}\dots z_r^{[0]}$.

Now we can prove the first part of the proposition using (i), (ii) and (iii) as follows: Let
$h\in \Hom_L(\hat{P}_{k+n-1},\hat{P}_k)$ and $k\neq -n$. Then $\partial^* h=h\circ \partial$ is given by
\begin{align*}
(\beta)&\stackrel{\partial}{\mapsto} \sum_i \pm z_i^{[\beta_i]} (\beta-\varepsilon^i) \\
 &\stackrel{h}{\mapsto} \sum_i \sum_\alpha \pm z_i^{[\beta_i]} h_{\alpha,\beta-\varepsilon^i} (\alpha). 
\end{align*}
(Here and in the following we write $\pm$ for a sign depending on the variables, which is not relevant for the proof.)
If $h\in C_k$, then $h_{\alpha,\beta-\varepsilon^i}\in \left<\tilde{\mathcal{I}}_{\alpha,\beta-\varepsilon^i}\right>_k$ and therefore
\[ \sum_i \pm z_i^{[\beta_i]} h_{\alpha,\beta-\varepsilon^i} \in \left<\tilde{\mathcal{J}}_{\alpha,\beta}\right>_k \]
by (ii); hence $\partial^* h\in A_k$. Similarly we get the implication $h\in D_k \implies \partial^* h\in B_k$ from (i). In the remaining case $k=-n$, the composition
$h\circ\partial$ is given by
\begin{align*}
(\svec{0})&\stackrel{\partial}{\mapsto} z_1^{[0]}z_2^{[0]}\dots z_r^{[0]} (\svec{-1}) \\
 &\stackrel{h}{\mapsto} \sum_\alpha \pm z_1^{[0]}z_2^{[0]}\dots z_r^{[0]} h_{\alpha,(\svec{-1})} (\alpha). 
\end{align*}
Together with (iii) one sees that $\partial^*$ maps $C_k$ to $0$. 

The proof for $\partial_*$ is done analogously using the corresponding claims
\begin{itemize}
\item[(I)] $z_i^{[\alpha_i+1]} \mathcal{I}_{\alpha+\varepsilon^i,\beta} 
              \subset \mathcal{J}_{\alpha,\beta}$,
\item[(II)] $z_i^{[\alpha_i+1]} \tilde{\mathcal{I}}_{\alpha+\varepsilon^i,\beta} 
               \subset \tilde{\mathcal{J}}_{\alpha,\beta}\cup \{0\}$,
\item[(III)] $z_1^{[0]}z_2^{[0]}\dots z_r^{[0]} \mathcal{\tilde{I}}_{\alpha,\beta}=0$.
\end{itemize}
We will prove (II) only; the proofs of (I) and (III) are similar to those of (i) and (iii) above. Again, it is enough to show the implication
$z_i^{[\alpha_i+1]}x\in\mathcal{J}_{\alpha,\beta} \implies x\in\mathcal{I}_{\alpha+\varepsilon^i,\beta}$ for every monomial $x$.
Let
\[ z_i^{[\alpha_i+1]}x=z_j^\nu \prod_{s\in S} z_s^{[\alpha_s+1]-[\beta_s+1]} \]
for some index set $S$, some $j$ not belonging to $S$, and $\nu\in\{[\alpha_j+1],[\beta_j]\}$. From $[\alpha_i+1]-[\beta_i+1]<[\alpha_i+1]$ we get $i\notin S$, and hence $j=i$. If $\nu=[\alpha_i+1]$, then we have
\[ x=\prod_{s\in S} z_s^{[\alpha_s+1]-[\beta_s+1]}\in\mathcal{I}_{\alpha+\varepsilon^i,\beta}. \]
On the other hand, if $\nu=[\beta_i]$ then in particular $[\beta_i]\geq [\alpha_i+1]$ and
\[ x=z_i^{[\beta_i]-[\alpha_i+1]}\prod_{s\in S} z_s^{[\alpha_s+1]-[\beta_s+1]}. \]
From the equality
\[ [\beta_i]-[\alpha_i+1]=[\alpha_i+2]-[\beta_i+1] \]
we get $x\in\mathcal{I}_{\alpha+\varepsilon^i,\beta}$. \qed

Let us now prove a generalised version of Proposition~\ref{llemma}.
\begin{lemma} \label{llemma2}
Let $k\in\mathbb{Z}$.
\begin{itemize}
\item[(i)] Let $f:\hat{P}_{n+k}\longrightarrow \hat{P}_k$ be an element of $A_k$ satisfying $\partial\circ f=0$. If $k\geq 0$, then there is some $h:\hat{P}_{n+k}\longrightarrow \hat{P}_{k+1}$ in $C_{k+1}$ such that $f=\partial\circ h$.
\item[(ii)] Let $f:\hat{P}_{n+k}\longrightarrow \hat{P}_k$ be an element of $A_k$ satisfying $f\circ \partial=0$. If $n+k<0$, then there is some $h:\hat{P}_{n+k-1}\longrightarrow \hat{P}_k$ in $C_k$ such that $f=h\circ\partial$.
\end{itemize}
\begin{align*} \xymatrix{
 & \hat{P}_{n+k}\ar[d]^{f}\ar@{-->}[dr]^{h} \\
 \hat{P}_{k-1} & \hat{P}_k\ar[l]^-\partial & \hat{P}_{k+1}\ar[l]^-\partial }  
&&
 \xymatrix{
 \hat{P}_{n+k-1}\ar@{-->}[dr]_h & \hat{P}_{n+k}\ar[l]^-\partial\ar[d]_f & \hat{P}_{n+k+1}\ar[l]^\partial \\
  & \hat{P}_k }
  \end{align*}
\end{lemma}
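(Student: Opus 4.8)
The plan is to follow the proof of Proposition~\ref{llemma} in spirit, but to replace the ad hoc argument there (the vector space identity $V\cap I^2=\{0\}$) by a direct appeal to Proposition~\ref{partialzerlegung}. In both parts one first produces \emph{some} morphism realising the desired factorisation, ignoring the decompositions, and then projects it onto its ``$C$-summand''; the point is that $\partial^*$ and $\partial_*$ respect the splittings, so nothing is lost in doing so.

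For part (i): from $\partial\circ f=0$ and exactness of $\hat{P}_*$ we get $\im f\subseteq\ker(\partial\colon\hat{P}_k\to\hat{P}_{k-1})=\im(\partial\colon\hat{P}_{k+1}\to\hat{P}_k)$, and since $\hat{P}_{n+k}$ is projective there is $\tilde{h}\colon\hat{P}_{n+k}\to\hat{P}_{k+1}$ with $f=\partial_*\tilde{h}$. Split $\tilde{h}=h+g$ along $\Hom_L(\hat{P}_{n+k},\hat{P}_{k+1})=C_{k+1}\oplus D_{k+1}$. Since $k\geq 0$, in particular $k\neq -1$, Proposition~\ref{partialzerlegung} gives $\partial_*h\in A_k$ and $\partial_*g\in B_k$; as $f\in A_k$ and the sum $A_k\oplus B_k$ is direct, $\partial_*g=0$, hence $f=\partial\circ h$ with $h\in C_{k+1}$. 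Part (ii) is the dual statement: $f\circ\partial=0$ says that $f$ kills $\im(\partial\colon\hat{P}_{n+k+1}\to\hat{P}_{n+k})=\ker(\partial\colon\hat{P}_{n+k}\to\hat{P}_{n+k-1})$, so $f$ factors through the submodule $\im(\partial\colon\hat{P}_{n+k}\to\hat{P}_{n+k-1})$ of $\hat{P}_{n+k-1}$; as $kG$ is self-injective, $\hat{P}_k$ is injective and this partial map extends to $\tilde{h}\colon\hat{P}_{n+k-1}\to\hat{P}_k$ with $f=\partial^*\tilde{h}$. Split $\tilde{h}=h+g$ with $h\in C_k$, $g\in D_k$; since $n+k<0$ forces $k\neq -n$, Proposition~\ref{partialzerlegung} gives $\partial^*h\in A_k$, $\partial^*g\in B_k$, whence $\partial^*g=0$ and $f=h\circ\partial$.

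The substantive work all sits in Proposition~\ref{partialzerlegung} and the combinatorics of the monomial sets $\mathcal{I}_{\alpha,\beta}$, $\mathcal{J}_{\alpha,\beta}$, so at this point there is no serious obstacle; the two things worth double-checking are that an undecomposed lift (resp.\ extension) genuinely exists --- which uses projectivity of $\hat{P}_{n+k}$ in (i) and injectivity of $\hat{P}_k$ (that is, that $kG$ is a Frobenius algebra) in (ii) --- and that the hypotheses $k\geq 0$ and $n+k<0$ are precisely what keeps us away from the exceptional indices $k=-1$ and $k=-n$ at which Proposition~\ref{partialzerlegung} degenerates.
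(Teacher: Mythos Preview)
Your proof is correct and follows the same strategy as the paper: produce an arbitrary lift via projectivity (resp.\ extension via injectivity of $\hat{P}_k$, since $kG$ is self-injective), then project onto the $C$-summand and invoke Proposition~\ref{partialzerlegung}, using that $k\geq 0$ (resp.\ $n+k<0$) avoids the exceptional index. The paper only writes out part~(i) and leaves~(ii) as ``similar''; your explicit dual argument for~(ii) via injectivity is exactly the intended one.
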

\begin{proof}
We will prove (i) only. Since $\hat{P}_{n+k}$ is projective and $\partial\circ f=0$, there is some map $h'\in\Hom_L(\hat{P}_{n+k},\hat{P}_{k+1})$ satisfying $f=\partial\circ h'=\partial_* h'$. Denote by $h$ the image of $h'$ under the retraction
$\Hom_L(\hat{P}_{n+k},\hat{P}_{k+1})=C_{k+1}\oplus D_{k+1} \longrightarrow C_{k+1}$.
By Proposition~\ref{partialzerlegung} one has the commutative diagram
\begin{align*}
\xymatrix{
C_{k+1}\oplus D_{k+1} \ar[r]\ar[d]^{\partial_*} & C_{k+1} \ar[d]^{\partial_*\vert_{C_{k+1}}} \\
A_k\oplus B_k \ar[r] & A_k } 
&&
\xymatrix{ h' \ar@{|->}[r] \ar@{|->}[d] & h \ar@{|->}[d] \\ f \ar@{|->}[r] &  f } 
\end{align*}
This proves the proposition.
\end{proof}

\begin{proof}[Proof of Theorem~\ref{flemma2} (cont.)]
Let $f:\hat{P}[n]\longrightarrow \hat{P}$ of non-positive degree $n$ be a cocycle and, at the same time, a $J_2$-map. We want to construct a null-homotopy for $h$ which is an $I$-map. Note that all the matrix entries of $f$ consist of sums of monomials of the form $z_i^{m_i-1}z_j^{m_j-1}z$ (where $i\neq j$, and $z$ is some monomial in the other variables), which belong to $\tilde{\mathcal{J}}_{\alpha,\beta}$. Thus $f_k:\hat{P}_{k+n}\longrightarrow \hat{P}_k$ is an element of $A_k$ for all $k$.

Now we will construct $h$ inductively in such a way that $h_k\in C_k$ for all $k$.
Put $h_0=0$. We know that $\partial\circ f_0=0$, because $\partial=\partial_0$ is given by multiplication with $\norm=\prod_i z_i^{m_i-1}$ and $\norm\cdot J_2\subset \norm\cdot I=0$. From
$f_0\in A_0$ and Proposition~\ref{llemma2}.(i) we get some $h_1:\hat{P}_n\longrightarrow \hat{P}_1$ satisfying $f_0=\partial\circ h_1$ and $h_1\in C_1$. 

Suppose we have constructed $h_0,h_1,h_2,\dots,h_k$ in such a way that $h_i\in C_i$ for all $i$ and $f_i=\partial h_{i+1}+(-1)^{n}h_i\partial$ for all $i=0,1,\dots,k-1$. Consider $f_k'=f_k-(-1)^n h_k\circ \partial$.
\[
\xymatrix@R=40pt@C=21pt{
\dots & \hat{P}_{n-1}\ar[l]\ar[d]_{f_{-1}}\ar@{-->}[dr]_{0} & \hat{P}_{n}\ar[l]\ar[d]_{f_{0}}\ar@{-->}[dr]_{h_1} &
\hat{P}_{n+1}\ar[l]\ar[d]_{f_1} & \dots\ar[l] & \hat{P}_{n+k-1}\ar[l]\ar[d]_{f_{k-1}}\ar@{-->}[dr]_{h_{k}} &
\hat{P}_{n+k}\ar[l]\ar[d]_{f_{k}}\ar@{-->}[dr]_{h_{k+1}} & \hat{P}_{n+k+1}\ar[l]\ar[d]_{f_{k+1}} & \dots\ar[l] \\
\dots & \hat{P}_{-1}\ar[l] & \hat{P}_{0}\ar[l] &
\hat{P}_{1}\ar[l] & \dots\ar[l] & \hat{P}_{k-1}\ar[l] &
\hat{P}_{k}\ar[l] & \hat{P}_{k+1}\ar[l] & \dots\ar[l] \\
}
\]
Since $h_k\in C_k$, we get $h_k\circ\partial\in A_k$ by Proposition~\ref{partialzerlegung}, and hence $f_k'\in A_k$. By induction hypothesis we have 
\[ \partial f_k'=\partial f_k-(-1)^n \partial h_k\partial=(-1)^n (f_{k-1}-\partial h_k)\partial
 =h_{k-1}\partial\partial =0. \]
By Proposition~\ref{llemma2}.(i) we get a map $h_{k+1}\in C_{k+1}$ satisfying $f_k'=\partial h_{k+1}$, i.e.~$f_k=\partial h_{k+1}+(-1)^n h_k\partial$. Using the same induction argument in negative degrees we end up with a homotopy $h$ satisfying $h_k\in C_k$ for all integers $k$. Since all elements of $C_k$ are $I$-maps, we are done.
\end{proof}

\subsection{Proof of the main theorem}
\begin{lemma} We have
\[ \Tate^*_{kG}(k,k)\cong k[u_i,v_i,\varphi_\alpha] / \sim \]
where $i=1,2,\dots,r$ and $\alpha$ runs through all multi-indices in $\mathbb{Z}_{\geq 0}^r$. The brackets on the right hand side
mean \emph{graded} commutativity. The relations are given by
\begin{eqnarray*}
u_i^2&=&\begin{cases} v_i & \text{if $m_i=2$} \\
 0 & \text{otherwise} \end{cases} \\
\varphi_\alpha u_i&=&\begin{cases} (-1)^{\alpha_i+\dots+\alpha_r+i}\,\varphi_{\alpha-\varepsilon^i} & \text{if $\alpha_i$ is odd, or $\alpha_i>0$ is even and $m_i=2$} \\ 0 & \text{otherwise} \end{cases} \\
\varphi_\alpha v_i&=&\begin{cases} \varphi_{\alpha-2\varepsilon^i} & \text{if $\alpha_i\geq 2$} \\ 0 & \text{otherwise} \end{cases} \\
\varphi_\alpha \varphi_\beta &=& 0  
\end{eqnarray*}
for all $i$ and all multi-indices $\alpha$ and $\beta$. The degrees are $|u_i|=1$, $|v_i|=2$, and $|\varphi_\alpha|=-|\alpha|-1$. 
\end{lemma}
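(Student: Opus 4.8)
The plan is to realise every generator on the right-hand side as the cohomology class of an explicit cocycle of the endomorphism dga $A$ of $\hat{P}_*$, built from the cyclic-group data of §\ref{szyklischegruppen} by the maps $\Phi$ and $\Psi$ of §\ref{allgemeinetheorie}. For the factor $G_i=\mathbb{Z}/m_i\mathbb{Z}$ recall the cocycles $\bar{x}_i$ (degree $1$) and $\bar{y}_i$ (degree $2$, invertible) of $A^\vdx{i}$. I would put $u_i=[\Phi(\bar{x}_i)]$ and $v_i=[\Phi(\bar{y}_i)]$, of degrees $1$ and $2$; and for $\alpha\in\mathbb{Z}_{\geq 0}^r$, letting $f^\alpha_i\in A^\vdx{i}$ be the cocycle of degree $-\alpha_i-1$ representing the standard generator of $\hat{H}^{-\alpha_i-1}(C_{m_i})$ (explicitly $\bar{y}_i^{-(\alpha_i+1)/2}$ or $\bar{x}_i\bar{y}_i^{-(\alpha_i/2+1)}$ according to the parity of $\alpha_i$, and $\bar{x}_i^{-\alpha_i-1}$ when $m_i=2$), I would set $\varphi_\alpha=[\Psi(f^\alpha_1,\dots,f^\alpha_r)]$. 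All the $f^\alpha_i$ have negative degree, so $\Psi$ applies, and the degree of $\varphi_\alpha$ comes out to $-|\alpha|-1$.

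First I would treat the non-negative part. By Lemma~\ref{phimultiplikativ} and Lemma~\ref{phifphigkomm} the classes $u_i,v_i$ generate a graded-commutative subalgebra, and the relations $u_i^2=v_i$ (for $m_i=2$, since $\bar{x}_i^2=\bar{y}_i$ there) and $u_i^2=0$ (for $m_i\geq 3$, since $\bar{x}_i^2=d\bar{q}_i$ and $\Phi(d\bar{q}_i)=d\Phi(\bar{q}_i)$ by Lemma~\ref{phidkomm}) hold. This subalgebra maps onto $\hat{H}^{\geq 0}(G)=H^*(G;k)$, which by the K\"unneth theorem is $\bigotimes_i H^*(C_{m_i};k)$, the $\Phi$'s realising the factor inclusions; comparing dimensions in each degree $n\geq 0$ (both have dimension $|M_n|$) shows the map is an isomorphism, so this subalgebra is presented by graded commutativity together with the two $u_i^2$-relations.

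Next, the negative part. Inspecting the degree-$0$ component of $\Psi$ shows that $\Psi(f^\alpha_1,\dots,f^\alpha_r)$ carries $\hat{P}_{-|\alpha|-1}$ to $\hat{P}_0$ by a unit multiple of the projection onto the summand $kG\cdot(\alpha)^*$; hence, by \eqref{tateiso}, the classes $\{\varphi_\alpha\mid|\alpha|=m\}$ span $\Tate^{-m-1}_{kG}(k,k)$, and since their number $|M_m|$ equals $\dim_k\Tate^{-m-1}_{kG}(k,k)=|N_{-m-1}|$ they form a $k$-basis. For the products, $\Psi(\cdot)\Psi(\cdot)$ is a $J_{r-1}$-map by Lemma~\ref{phikombo}, hence an $I$-map (as $r\geq 2$), so it has class $0$ and $\varphi_\alpha\varphi_\beta=0$. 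For $\varphi_\alpha u_i$ (and similarly $\varphi_\alpha v_i$ with $\bar{y}_i$ in place of $\bar{x}_i$) I would use Lemma~\ref{psiphikombo}: modulo an $I$-map of class $0$, $\Psi(\dots,f^\alpha_i,\dots)\circ\Phi(\bar{x}_i)$ equals $\pm\Psi(\dots,f^\alpha_i\bar{x}_i,\dots)$ when $|f^\alpha_i\bar{x}_i|=-\alpha_i<0$, and is itself an $I$-map when $\alpha_i=0$; then I identify $f^\alpha_i\bar{x}_i$ inside $\hat{H}^*(C_{m_i})$ — it is the standard generator of $\hat{H}^{-\alpha_i}(C_{m_i})$ unless $m_i\geq 3$ and $\alpha_i$ is even and positive, where $x_i^2=0$ kills it — which reproduces exactly the stated formula for $\varphi_\alpha u_i$, and likewise for $\varphi_\alpha v_i$, where the only vanishing comes from $|f^\alpha_i\bar{y}_i|\geq 0$, i.e.\ $\alpha_i<2$, since $y_i$ is a unit. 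Graded commutativity of the $\varphi_\alpha$ with the $u_i,v_i$ follows by comparing both parts of Lemma~\ref{psiphikombo}.

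Finally, these relations give a surjection $k[u_i,v_i,\varphi_\alpha]/{\sim}\longrightarrow\Tate^*_{kG}(k,k)$ which is the isomorphism of the second paragraph in non-negative degrees, while in degree $-m-1<0$ every monomial of the source reduces — using $\varphi_\alpha\varphi_\beta=0$, graded commutativity, and the module relations — to $\pm\varphi_\gamma$ with $|\gamma|=m$ or to $0$, so the source is at most $|N_{-m-1}|$-dimensional there and the surjection is an isomorphism by the basis statement. The step I expect to be the real obstacle is the third paragraph: unwinding the rather involved definition of $\Psi$ to confirm that its degree-$0$ component is the coordinate projection, and carrying the signs through $\Psi$ and Lemma~\ref{psiphikombo} precisely enough to obtain $\varphi_\alpha v_i=\varphi_{\alpha-2\varepsilon^i}$ without a sign and the exact sign $(-1)^{\alpha_i+\dots+\alpha_r+i}$ in the formula for $\varphi_\alpha u_i$; the K\"unneth identification and the degenerate case $m_i=2$ (where $v_i$ is redundant) also need care.
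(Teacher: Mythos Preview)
Your proposal is correct and follows essentially the same route as the paper: define $\bar u_i=\Phi(\bar x_i)$, $\bar v_i=\Phi(\bar y_i)$, and $\bar\varphi_\alpha=\Psi(\bar f_1,\dots,\bar f_r)$ with $\bar f_i=\bar y_i^{\beta_i}\bar x_i^{\epsilon_i}$, then read off the non-negative part from K\"unneth and derive the negative-degree relations from Lemmas~\ref{phikombo} and~\ref{psiphikombo} together with the corollary of Lemma~\ref{dpsikomm}; the paper packages the relation-checking into a separate Lemma~\ref{kozrelationen1}, and your final dimension count is in fact slightly more explicit than the paper's. Your self-identified obstacle is accurate: the sign bookkeeping in $\Psi$ and in Lemma~\ref{psiphikombo}(ii) is exactly where the formula $(-1)^{\alpha_i+\dots+\alpha_r+i}$ comes from, and verifying that $(\bar\varphi_\alpha)_0$ is the projection onto the $(\alpha)^*$-summand amounts to checking $(\bar f_i)_0=\mathrm{id}$, which your choice of $\bar f_i$ satisfies.
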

\beweis
First, let us restrict to the non-negative case. The claim is
\begin{equation} \label{kunneth1} \Ext^*_{kG}(k,k)\cong k[\{u_i,v_i\}_{i=1}^r] / \sim \end{equation}
(where $\sim$ stands for the first of the relations given in the Proposition). The claim follows immediately from the Künneth isomorphism, because both sides are the tensor product of all $H^*(G_i,k)\cong k[u_i,v_i]/\sim$ for $i=1,2,\dots,r$. But we are interested in explicit representatives of the $u_i$'s and $v_i$'s in the endomorphism algebra. Note that (see \cite{bks}, Example~7.7) the Künneth isomorphism is induced by a map of endomorphism-dga's as follows. By neglecting the negative part of the complete resolutions $\hat{P}^i_*$ and $\hat{P}_*$ we get 'ordinary' projective resolutions $P^i_*$ and $P_*$ of $k$. By construction, $P_*=\bigotimes_i P^i_*$. One has a quasi-isomorphism of dga's
\begin{align} \label{kunnethpre} \bigotimes_{i=1}^r \Hom_{kG_i}^*(P^i_*,P^i_*) \longrightarrow \Hom_{kG}^*(P_*,P_*), 
\end{align}
given by tensoring endomorphisms, which induces the Künneth isomorphism. Let us denote by $\Bar{x}'_i$ the non-negative part of $\Bar{x}_i$; this is a representative of $x_i$ considered as an element of $H^*(G_i,k)$. By construction of $\Phi$, the morphism \eqref{kunnethpre} maps
\[ \Id\otimes \Id\otimes\dots\otimes\Bar{x}'_i\otimes\dots\otimes\Id \]
to the non-negative part of $\Phi(\Bar{x}_i)$. Similarly, this holds for $\Bar{y}_i$. 
Therefore, if we define
\[ \Bar{u}_i=\Phi(\Bar{x}_i)\quad\text{and}\quad \Bar{v}_i=\Phi(\Bar{y}_i), \]
we get representatives for the $u_i$'s and $v_i$'s in (\ref{kunneth1}).

Now we are going to construct the $\varphi_\alpha$'s. Fix some multi-index $\alpha$. For every $i$ we can write $ -\alpha_i-1=2\beta_i+\epsilon_i$, where $\beta_i\in\mathbb{Z}_-$ and $\epsilon_i\in\{0,1\}$. Now consider $ \Bar{f}_i=\Bar{y}_i^{\beta_i} \Bar{x}_i^{\epsilon_i} $.
It is easy to check that the map in degree zero
\[ \bigl(\Bar{f}_i\bigr)_0:\hat{P}^i_{-\alpha_i-1}\longrightarrow \hat{P}^i_0 \]
is the identity map. Let us define  $\Bar{\varphi}_\alpha = \Psi(\bar{f}_1,\bar{f}_2,\dots,\bar{f}_r)$. The degree of this map is $|\bar{\varphi}_\alpha|=-|\alpha|-1$. By construction of $\Psi$, the map in degree $0$ 
\[ \bigl(\Bar{\varphi}_\alpha\bigr)_0:\hat{P}_{-|\alpha|-1}\longrightarrow \hat{P}_0 \]
is (up to a sign) the projection onto the summand $\bigotimes_i \hat{P}^i_{-\alpha_i-1} \subseteq \hat{P}_{-|\alpha|-1}$. This implies that under the isomorphism $\smash{\Tate}^{-|\alpha|-1}_{kG}(k,k)\cong k^{M_{|\alpha|}}$ the class of this map is sent to the $k$-basis element corresponding to $\alpha$. In particular, the classes $\varphi_\alpha$ of the $\bar{\varphi}_\alpha$ are $k$-linearly independent. 

The relations given in the Proposition will follow from the following
\begin{lemma} \label{kozrelationen1} For all multi-indices $\alpha,\beta$ and all $j$ we have
\begin{align}
 (-1)^{\alpha_1+\dots+\alpha_{j-1}+j-1}\bar{u}_j \bar{\varphi}_\alpha \simeq
 (-1)^{\alpha_j+\dots+\alpha_r+j}\bar{\varphi}_\alpha \bar{u}_j &\simeq\begin{cases}
     \bar{\varphi}_{\alpha-\varepsilon^j}  & \text{if $\alpha_j$ is odd, or $\alpha_j>0$ and $p_j=1$} 
     \\ 0 & \text{otherwise}  \end{cases}    \label{relationen1} \\
 \bar{v}_j \bar{\varphi}_\alpha \simeq\bar{\varphi}_\alpha \bar{v}_j &\simeq\begin{cases}
     \bar{\varphi}_{\alpha-2\varepsilon^j}  & \text{if $\alpha_j\geq 2$} 
     \\ 0 & \text{otherwise}  \end{cases}    \label{relationen2} \\
 \bar{\varphi}_\alpha\bar{\varphi}_\beta &\simeq 0 \label{relationen3}
\end{align}
If $r\geq 3$ and $m_i\neq 3$ for all $i$, then all these homotopies can be chosen to be $I$-maps.
\end{lemma}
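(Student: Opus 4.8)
The plan is to realise each of the three families of homotopies by combining the building-block cocycles and homotopies of the individual endomorphism dgas $A^\vdx{i}$ through the maps $\Phi$ and $\Psi$ constructed earlier, and then to correct the resulting error terms with the lifting Theorem~\ref{flemma2}. Throughout I will use two elementary facts: a cocycle $f$ of the endomorphism dga which is an $I$-map satisfies $\class(f)=0$ (its value is represented by $\epsilon\circ f_0=0$ since $\im f_0\subseteq I=\ker\epsilon$), hence has vanishing cohomology class by minimality of $\hat P_*$ and is therefore null-homotopic; and $J_{r-1}\subseteq J_1\subseteq I$ for $r\ge 2$, while $J_{r-1}\subseteq J_2$ for $r\ge 3$. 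Recall $\bar u_j=\Phi(\bar x_j)$, $\bar v_j=\Phi(\bar y_j)$, and $\bar\varphi_\alpha=\Psi(\bar f_1,\dots,\bar f_r)$ with $\bar f_i=\bar y_i^{\beta_i}\bar x_i^{\epsilon_i}$, $-\alpha_i-1=2\beta_i+\epsilon_i$, $\epsilon_i\in\{0,1\}$, $\beta_i\le -1$; thus $|\bar f_i|=-\alpha_i-1$ and $|\bar\varphi_\alpha|=-|\alpha|-1$.

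For \eqref{relationen3} the argument is short: by Proposition~\ref{phikombo} the cocycle $\bar\varphi_\alpha\bar\varphi_\beta$ is a $J_{r-1}$-map, hence an $I$-map cocycle, hence null-homotopic, which is $\varphi_\alpha\varphi_\beta=0$; when $r\ge 3$ it is moreover a $J_2$-map, so Theorem~\ref{flemma2} upgrades the null-homotopy to an $I$-map. For \eqref{relationen1} and \eqref{relationen2} I would apply Proposition~\ref{psiphikombo}: with $g=\bar x_j$ (resp.\ $g=\bar y_j$) it writes $\bar u_j\bar\varphi_\alpha$ and $\bar\varphi_\alpha\bar u_j$ (resp.\ $\bar v_j\bar\varphi_\alpha$, $\bar\varphi_\alpha\bar v_j$), up to an explicit sign, as $\Psi(\bar f_1,\dots,g\bar f_j,\dots,\bar f_r)$ plus a $J_{r-1}$-map when $|g\bar f_j|<0$, and as a $J_{r-1}$-map outright when $|g\bar f_j|\ge 0$; feeding $n=-|\alpha|-1$ and $n_i=-\alpha_i-1$ into the exponents of Proposition~\ref{psiphikombo} turns those signs into the ones in the statement. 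One then simplifies the reindexed entry inside $A^\vdx{j}$ using $\bar x_j\bar y_j=\bar y_j\bar x_j$. For $g=\bar y_j$ this gives $\bar y_j^{\beta_j+1}\bar x_j^{\epsilon_j}$, which is exactly the building block defining $\bar\varphi_{\alpha-2\varepsilon^j}$ when $\alpha_j\ge 2$, and has non-negative degree (so contributes nothing, and forces $\varphi_\alpha v_j=0$) when $\alpha_j\le 1$. For $g=\bar x_j$ it gives $\bar y_j^{\beta_j}\bar x_j^{\epsilon_j+1}$, which for $\epsilon_j=0$ (i.e.\ $\alpha_j$ odd) is the building block of $\bar\varphi_{\alpha-\varepsilon^j}$, and for $\epsilon_j=1$ (i.e.\ $\alpha_j$ even) equals $\bar y_j^{\beta_j}\bar x_j^2$; here Theorem~\ref{zyklsatz} gives two subcases: if $m_j=2$ then $\bar x_j^2=\bar y_j$ and $\bar y_j^{\beta_j}\bar x_j^2=\bar y_j^{\beta_j+1}$ is the building block of $\bar\varphi_{\alpha-\varepsilon^j}$ (when $\alpha_j>0$), while if $m_j\ge 3$ then $\bar x_j^2=d\bar q_j$, so $\bar y_j^{\beta_j}\bar x_j^2=d(\bar y_j^{\beta_j}\bar q_j)$ and, by Proposition~\ref{dpsikomm} (the remaining entries being cocycles), $\Psi(\bar f_1,\dots,\bar y_j^{\beta_j}\bar x_j^2,\dots,\bar f_r)$ is the boundary of $\pm\Psi(\bar f_1,\dots,\bar y_j^{\beta_j}\bar q_j,\dots,\bar f_r)$.

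Collecting these, in every case $\bar u_j\bar\varphi_\alpha$ (and its three analogues) is written as the signed main term of the statement — one of $\bar\varphi_{\alpha-\varepsilon^j}$, $\bar\varphi_{\alpha-2\varepsilon^j}$, $0$ — plus an exact term (present only when $m_j\ge 3$ and $\alpha_j$ is even) plus a $J_{r-1}$-map; the $J_{r-1}$-map is the difference of three cocycles, hence a cocycle, and lies in non-positive degree except for the single case $\bar v_j\bar\varphi_{\svec 0}$. Since the exact term is null-homotopic by construction and the $J_{r-1}$-map is an $I$-map cocycle with $\class=0$, each of \eqref{relationen1}--\eqref{relationen3} holds for all $r\ge 2$. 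For the final clause, assume $r\ge 3$ and $m_i\ne 3$ for all $i$: then the $J_{r-1}$-map part is a $J_2$-map, so in the non-positive-degree cases Theorem~\ref{flemma2} provides an $I$-map null-homotopy, and the homotopy $\pm\Psi(\bar f_1,\dots,\bar y_j^{\beta_j}\bar q_j,\dots,\bar f_r)$ of the exact term is an $I$-map because, for $m_j\ne 3$, the homotopy $\bar q_j$ of Theorem~\ref{zyklsatz} is multiplication by $z_j^{m_j-3}$ in odd degrees with $m_j-3\ge 1$, hence an $I$-map of $kG_j$, and $\Psi$ of a tuple one of whose entries is an $I$-map is again an $I$-map; summing the two $I$-map homotopies gives the one required, and transitivity yields the homotopies between $\bar u_j\bar\varphi_\alpha$ and $\bar\varphi_\alpha\bar u_j$ and between $\bar v_j\bar\varphi_\alpha$ and $\bar\varphi_\alpha\bar v_j$. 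The degenerate case $\bar v_j\bar\varphi_{\svec 0}$, where the composite sits in positive degree $1$, has to be handled separately, e.g.\ by reducing via the relations already established to a composite of smaller index, or by direct inspection of $\Phi$ and $\Psi$ in low degrees.

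I expect the real work to be the disciplined case analysis around $\bar x_j^2$: tracking the parity of $\alpha_j$, the trichotomy $m_j=2$ / $m_j=3$ / $m_j\ge 4$, and verifying in each branch both that the degree hypothesis invoked in Proposition~\ref{psiphikombo} is the one claimed and that the reindexed tuple $\bar y_j^{\beta_j}\bar x_j^{\epsilon_j'}$ really is the building block defining $\bar\varphi_{\alpha-\varepsilon^j}$ or $\bar\varphi_{\alpha-2\varepsilon^j}$ — together with the sign bookkeeping inherited from Proposition~\ref{psiphikombo} and the definitions of $\Phi$ and $\Psi$, and the separate treatment of the boundary composite $\bar v_j\bar\varphi_{\svec 0}$.
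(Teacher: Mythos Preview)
Your proposal is correct and takes essentially the same approach as the paper: reduce via Propositions~\ref{psiphikombo} and~\ref{phikombo} to a reindexed $\Psi$ plus a $J_{r-1}$-error, simplify the $j$-th slot using $\bar x_j\bar y_j=\bar y_j\bar x_j$ and the null-homotopy $\bar q_j$ of $\bar x_j^2$ (via Proposition~\ref{dpsikomm}), and then upgrade the homotopies to $I$-maps by Theorem~\ref{flemma2} when $r\ge 3$ and $m_i\neq 3$. You are in fact more careful than the paper about the degree hypothesis of Theorem~\ref{flemma2}: the paper's proof invokes that theorem without isolating the positive-degree case $\bar v_j\bar\varphi_{\svec 0}$, whereas you correctly flag it as needing separate treatment.
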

\beweis
Ad \eqref{relationen1}:
We will use the notation of the previous proof. Due to Proposition~\ref{psiphikombo} we have modulo $J_{r-1}$-maps
\begin{align*}
 (-1)^{\alpha_j+\dots+\alpha_r+j} \bar{\varphi}_\alpha \bar{u}_j &= (-1)^{\alpha_j+\dots+\alpha_r+j} \Psi(\bar{f}_1,\dots,\bar{f}_r)\Phi(\Bar{x}_j)  \\
 &\equiv \begin{cases} \Psi(\bar{f}_1,\dots,\bar{f}_j \bar{x}_j,\dots,\bar{f}_r) & \text{if $|\bar{f}_j \bar{x}_j|<0$} \\ 0 & \text{otherwise} \end{cases}  \\
 &=\begin{cases} \Psi(\bar{f}_1,\dots,\bar{y}_j^{\beta_j}\bar{x}_j^{\epsilon_j+1},\dots,\bar{f}_r) & \text{if $\alpha_j>0$} \\ 0 &\text{otherwise} \end{cases}
\intertext{Similarly, from $\bar{x}_j\bar{y}_j=\bar{y}_j\bar{x}_j$ we get}
 (-1)^{\alpha_1+\dots+\alpha_{j-1}+j-1} \bar{u}_j \bar{\varphi}_\alpha &\equiv \begin{cases} \Psi(\bar{f}_1,\dots,\bar{x}_j\bar{f}_j,\dots,\bar{f}_r) & \text{if $| \bar{x}_j\bar{f}_j|<0$} \\ 0 & \text{otherwise} \end{cases}  \\
 &=\begin{cases} \Psi(\bar{f}_1,\dots,\bar{y}_j^{\beta_j}\bar{x}_j^{\epsilon_j+1},\dots,\bar{f}_r) & \text{if $\alpha_j>0$} \\ 0 &\text{otherwise} \end{cases}
\end{align*}
If $\alpha_j$ is odd, then $\epsilon_j=0$, and by definition
\[ \Psi(\bar{f}_1,\dots,\bar{y}_j^{\beta_j}\bar{x}_j^{\epsilon_j+1},\dots,\bar{f}_r)
 =\bar{\varphi}_{\alpha-\varepsilon^j}. \]
If $\alpha_j$ is positive and even, then $\bar{x}_j^{\epsilon_j+1}=\bar{x}_j^2$, and we are left with two possible cases: If $m_j=2$, then $\bar{x}_j^2=\bar{y}_j$ and 
\[ \Psi(\bar{f}_1,\dots,\bar{y}_j^{\beta_j}\bar{x}_j^{\epsilon_j+1},\dots,\bar{f}_r)
=\Psi(\bar{f}_1,\dots,\bar{y}_j^{\beta_j+1},\dots,\bar{f}_r)
 =\bar{\varphi}_{\alpha-\varepsilon^j}. \]
If $m_j\geq 3$, then $\bar{x}_j^2$ is null-homotopic via $\bar{q}_j$. By Proposition~\ref{dpsikomm}, $\Psi(\bar{f}_1,\dots,\bar{y}_j^{\beta_j}\bar{x}_j^{\epsilon_j+1},\dots,\bar{f}_r)$ is null-homotopic via $\Psi(\bar{f}_1,\dots,\bar{y}_j^{\beta_j}\bar{q}_j,\dots,\bar{f}_r)$. This is an $I$-map, because $\bar{q}_j$ is a $\bigl<x_j^{m_j-3}\bigr>_{kG_j}$-map and $x_j^{m_j-3}\in I$ (due to $m_j\geq 4$). 

This can be summarized as follows: The cocycles we considered are equal up to a $J_{r-1}$-map and possibly some homotopy which is an $I$-map if $m_j\neq 3$. Since $r\geq 2$, we know that $J_{r-1}\subseteq I$. By Proposition~\ref{nulllemma}, the cocycles represent the same cohomology class; therefore, they are homotopic. If $r\geq 3$ and $m_j\neq 3$, the homotopy can be chosen to be an $I$-map by Theorem~\ref{flemma2}.\bigskip

Ad \eqref{relationen2}:
Since $\bar{x}_j$ and $\bar{y}_j$ commute, we know by Proposition~\ref{psiphikombo} modulo $J_{r-1}$-maps
\begin{eqnarray*}
 \bar{\varphi}_\alpha \bar{v}_j = \Psi(\bar{f}_1,\dots,\bar{f}_r)\Phi(\Bar{y}_j)
 &\equiv& \begin{cases} \Psi(\bar{f}_1,\dots,\bar{f}_j \bar{y}_j,\dots,\bar{f}_r) & \text{if $|\bar{f}_j \bar{y}_j|<0$} \\ 0 & \text{otherwise} \end{cases}  \\
 &=&\begin{cases} \Psi(\bar{f}_1,\dots,\bar{y}_j^{\beta_j+1}\bar{x}_j^{\epsilon_j},\dots,\bar{f}_r) & \text{if $\alpha_j>1$} \\ 0 &\text{otherwise} \end{cases} \\
 &=&\begin{cases} \bar{\varphi}_{\alpha-2\varepsilon^j} & \text{if $\alpha_j>1$} \\ 0 & \text{otherwise} \end{cases}
\end{eqnarray*}
and similarly for $\bar{v}_j\bar{\varphi}_\alpha$.\bigskip

The relation (\ref{relationen3}) follows directly from Proposition~\ref{phikombo}.\qed

\begin{lemma} \label{kozrelationen2} If $r\geq 3$, then:
\begin{itemize}
\item[(a)] Every two cocycles from $\{\bar{v}_1,\dots,\bar{v}_r,\bar{u}_1,\dots,\bar{u}_r\}$ commute (in the graded sense) up to a homotopy which is an $I$-map. 
\item[(b)] If $m_i\geq 3$, then $\bar{u}_i^2$ is null-homotopic. If $m_i\geq 4$, then the homotopy can be chosen to be an $I$-map. 
\end{itemize}
\end{lemma}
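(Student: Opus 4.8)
The plan is to reduce everything to the multiplicativity of $\Phi$ (Proposition~\ref{phimultiplikativ}), the commutation homotopy constructed in Proposition~\ref{phifphigkomm}, and the explicit cocycles and homotopies for cyclic $p$-groups recalled in \S\ref{szyklischegruppen}. Recall that $\bar u_i=\Phi(\bar x_i)$ and $\bar v_i=\Phi(\bar y_i)$, where $\bar x_i,\bar y_i\in A^{(i)}$ are the periodic degree-$1$ and degree-$2$ cocycles of the endomorphism dga of $\hat P^{i}_*$; thus every product occurring in the statement is $\Phi$ applied to a product of $\bar x_i$'s and $\bar y_i$'s inside the various $A^{(i)}$.

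For (a) I would distinguish whether the two cocycles come from different factors or from the same factor. If $\bar a=\Phi(f)$ with $f\in\{\bar x_i,\bar y_i\}$ and $\bar b=\Phi(g)$ with $g\in\{\bar x_j,\bar y_j\}$ and $i\neq j$, then $f$ and $g$ are cocycles of positive degree lying in $A^{(i)}$ and $A^{(j)}$ respectively, so Proposition~\ref{phifphigkomm} gives $\bar a\bar b-(-1)^{|\bar a||\bar b|}\bar b\bar a=dh$ with $h$ the explicit homotopy~\eqref{homotopieH}. By Remark~\ref{phifbemerkung} this $h$ is a $J_{r-2}$-map, and since $r\geq 3$ one has $J_{r-2}\subseteq J_1\subseteq I$ (each generator $z_\ell^{m_\ell-1}$ of $J_1$ lies in the augmentation ideal, as $m_\ell\geq 2$), so $h$ is an $I$-map. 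If the two cocycles come from the same factor $i$, the only nontrivial pair is $\{\bar u_i,\bar v_i\}$: here $\bar x_i$ and $\bar y_i$ commute \emph{on the nose} in $A^{(i)}$ (both are periodic, $\bar y_i$ being the $2$-periodicity isomorphism), and Proposition~\ref{phimultiplikativ} yields $\bar u_i\bar v_i=\Phi(\bar x_i\bar y_i)=\Phi(\bar y_i\bar x_i)=\bar v_i\bar u_i$; since $|\bar u_i|\cdot|\bar v_i|=2$ is even this is the required graded commutation, realised by the zero homotopy. (For a single cocycle $\bar a$ the graded commutator equals $(1-(-1)^{|\bar a|})\bar a^2$, which vanishes unless $\bar a=\bar u_i$ and $\mathrm{char}\,k\neq 2$, in which case $m_i=p^{p_i}\geq 3$ and one appeals to part~(b).)

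For (b) I would first note, again by Proposition~\ref{phimultiplikativ}, that $\bar u_i^2=\Phi(\bar x_i)^2=\Phi(\bar x_i^2)$. When $m_i\geq 3$, \S\ref{szyklischegruppen} provides a null-homotopy $\bar q_i$ of $\bar x_i^2$ in $A^{(i)}$, so by Corollary~\ref{Phikorollar} the map $\Phi(\bar q_i)$ is a null-homotopy of $\Phi(\bar x_i^2)=\bar u_i^2$; this settles the first assertion. When $m_i\geq 4$ the homotopy $\bar q_i$ is multiplication by $z_i^{m_i-3}$ in odd degrees and $0$ in even degrees, so all its matrix entries lie in $\langle z_i\rangle\subseteq I$; inspecting the three ranges in the definition of $\Phi$ one sees that every matrix entry of $\Phi(\bar q_i)$ is, up to sign, an entry of $\bar q_i$ possibly multiplied by entries of $\partial$, hence lies in $I$, so $\Phi(\bar q_i)$ is an $I$-map.

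The main obstacle is precisely the $m_i\geq 4$ part of (b): here one cannot invoke the lifting Theorem~\ref{flemma2}, both because $\bar u_i^2$ has positive degree $2$ and because it fails to be a $J_2$-map (its relevant matrix entry $z_i^{m_i-2}$ is a nonzero power of a \emph{single} variable, hence is not contained in $J_1^2$). So the ideal-membership of the homotopy must be tracked by hand through the formulas defining $\Phi$; the remaining verifications are routine bookkeeping with signs and the structural lemmas already established in \S\ref{allgemeinetheorie}.
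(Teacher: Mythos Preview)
Your proof is correct and follows essentially the same approach as the paper: for (a) with $i\neq j$ you use Proposition~\ref{phifphigkomm} and Remark~\ref{phifbemerkung} to get a $J_{r-2}$-homotopy, which is an $I$-map since $r\geq 3$; for the same factor you use $\bar x_i\bar y_i=\bar y_i\bar x_i$ together with Proposition~\ref{phimultiplikativ}; and for (b) you use $\Phi(\bar q_i)$ via Proposition~\ref{phidkomm}/Corollary~\ref{Phikorollar} and check directly that it is an $I$-map when $m_i\geq 4$---exactly as the paper does, though you spell out the last verification in more detail. One small remark on your parenthetical about the self-commutator $2\bar u_i^2$: part~(b) only guarantees an $I$-map homotopy when $m_i\geq 4$, not $m_i\geq 3$, so your appeal to (b) is inconclusive when $m_i=3$; the paper's proof does not address this case either, and indeed the lemma is only applied (Corollary~\ref{allekozrelationen}) under the hypothesis $m_i\neq 3$.
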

\beweis
Ad (a): If $j=i$, then $\bar{v}_j\bar{u}_j=\bar{u}_j\bar{v}_j$, since $\bar{x}_j\bar{y}_j=\bar{y}_j\bar{x}_j$. Assume $i\neq j$. For arbitrary cocycles $f\in A^\vdx{i}$, $g\in A^\vdx{j}$ of positive degree we know by Remark~\ref{phifbemerkung} and Proposition~\ref{phifphigkomm} that $\Phi(f)\Phi(g)$ and $(-1)^{ij}\Phi(g)\Phi(f)$ are homotopic via some $J_{r-2}$-map. All the $\bar{v}_j$ and $\bar{u}_j$ are of the form $\Phi(f)$, and $J_{r-2}$-maps are $I$-maps, because $r\geq 3$. This proves (a).

Ad (b): By Proposition~\ref{phidkomm} $\bar{u}_i^2$ is null-homotopic via $\Phi(\bar{q}_i)$, which is an $I$-map if $m_i\geq 4$.\qed

As an immediate consequence of the previous two Propositions we get
\begin{korollar} \label{allekozrelationen} Suppose $r\geq 3$ and $m_i\neq 3$ for all $i$. Let $\bar{f}$ and $\bar{g}$ be compositions of cocycles of the set
$\{\bar{u}_i,\bar{v}_i,\bar{\varphi}_\alpha\}_{i,\alpha},$
and assume that one can get $\bar{g}$ from $\bar{f}$ using the relations \eqref{relationen1}, \eqref{relationen2}, \eqref{relationen3}, graded commutativity and
\[ \bar{u}_i^2 \simeq \begin{cases} \bar{v}_i & \text{if $m_i=2$} \\ 0 & \text{otherwise.} \end{cases} \]
Then $\bar{f}$ and $\bar{g}$ are homotopic via an $I$-map.
\end{korollar}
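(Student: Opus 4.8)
The plan is to deduce this purely formally from Propositions~\ref{kozrelationen1} and~\ref{kozrelationen2}, with no further computation in the resolution. First I would record the two observations that make the bookkeeping work: any composition of the cocycles $\bar u_i,\bar v_i,\bar\varphi_\alpha$ is again a cocycle of the endomorphism algebra, and each elementary move allowed in the statement --- the relations \eqref{relationen1}, \eqref{relationen2}, \eqref{relationen3}, graded commutativity among the $\bar u_i,\bar v_i$, and $\bar u_i^2\simeq\bar v_i$ resp.~$\bar u_i^2\simeq 0$ --- is, by those two Propositions, precisely a homotopy \emph{through an $I$-map} between two such cocycles. This is the only place where the hypotheses $r\geq 3$ and $m_i\neq 3$ are used.

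Next I would establish the one small structural lemma that is really needed, namely that $I$-map homotopies can be transported through a composition and concatenated. Concretely, if $\bar a\simeq\bar b$ via an $I$-map $h$ (so $dh=\bar a-\bar b$) and $\bar c,\bar c'$ are cocycles, then $d(\bar c\,h\,\bar c')=\bar c\,(dh)\,\bar c'=\bar c\,\bar a\,\bar c'-\bar c\,\bar b\,\bar c'$ by the Leibniz rule (using $d\bar c=d\bar c'=0$), so $\bar c\,h\,\bar c'$ is a homotopy from $\bar c\,\bar a\,\bar c'$ to $\bar c\,\bar b\,\bar c'$; and it is an $I$-map because the $I$-maps form a two-sided ideal in the endomorphism algebra (\S\ref{sliftungseigen1}). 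Similarly, the sum of two $I$-map homotopies (for composable relations) is again an $I$-map homotopy, since sums of $I$-maps are $I$-maps.

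Finally I would run an induction on the number of elementary moves used to pass from $\bar f$ to $\bar g$. A single move rewrites $\bar f=\bar c\,\bar a\,\bar c'$ as $\bar c\,\bar b\,\bar c'$, where $\bar a,\bar c,\bar c'$ are (possibly empty, i.e.~$\id$) compositions of the given cocycles and hence cocycles, and $\bar a\simeq\bar b$ via an $I$-map; the structural lemma then gives $\bar f\simeq\bar c\,\bar b\,\bar c'$ via an $I$-map. Chaining the resulting homotopies along the whole sequence of moves and adding them up yields the desired $I$-map homotopy from $\bar f$ to $\bar g$. The only nuisance I expect is keeping track of the signs $\pm$ introduced by the various relations (and by graded commutativity), but since these never leave the ideal of $I$-maps they do not affect the conclusion; everything else is formal.
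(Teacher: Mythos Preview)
Your proposal is correct and is precisely the argument the paper has in mind: in the paper the corollary is stated as an ``immediate consequence'' of Propositions~\ref{kozrelationen1} and~\ref{kozrelationen2} with no further proof, and your induction on elementary moves together with the observation that $I$-maps form a two-sided ideal is exactly what makes it immediate. The only imprecision is the missing sign $(-1)^{|\bar c|}$ in $d(\bar c\,h\,\bar c')=(-1)^{|\bar c|}\bar c\,(dh)\,\bar c'$, but as you already note this is harmless since scalar multiples of $I$-maps are $I$-maps.
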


Now we can prove our main result.
\begin{satz} \label{realisierbar}
If $r\geq 3$ and $m_i\neq 3$ for all $i$, then $\gamma_G=0$.
\end{satz}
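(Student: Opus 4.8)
The plan is to imitate the proof of Theorem~\ref{rsatz}: we choose a cycle selection homomorphism $f_1$ for which the accompanying map $f_2$ can be taken to consist entirely of $I$-maps, and then conclude by the argument of Proposition~\ref{ilemma}. Concretely, fix a $k$-basis $\mathfrak{C}$ of $\Tate^*_{kG}(k,k)$ given by the normal-form monomials in the $u_i$ and $v_i$ (a basis of the non-negative part coming from the K\"unneth decomposition, with $u_i$ occurring to the exponent $0$ or $1$ whenever $m_i\geq 3$) together with the classes $\varphi_\alpha$, $\alpha\in\mathbb{Z}_{\geq 0}^r$; by the Lemma describing $\Tate^*_{kG}(k,k)$ these indeed form a basis. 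Define $f_1$ on $\mathfrak{C}$ by sending each such basis element to the corresponding composition of the explicit cocycles $\bar u_i=\Phi(\bar x_i)$, $\bar v_i=\Phi(\bar y_i)$ and $\bar\varphi_\alpha=\Psi(\bar f_1,\dots,\bar f_r)$, and extend $k$-linearly.

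Next I construct $f_2$. Let $a,b\in\mathfrak{C}$. Then $f_1(a)f_1(b)$ is a composition of cocycles from $\{\bar u_i,\bar v_i,\bar\varphi_\alpha\}_{i,\alpha}$, and so is $f_1(ab)$, where $ab$ is computed in $\Tate^*_{kG}(k,k)$ and, by the same Lemma, equals $\pm$ a basis element or zero. One passes from the former to the latter using only graded commutativity, the relations \eqref{relationen1}, \eqref{relationen2}, \eqref{relationen3}, and the relation $\bar u_i^2\simeq\bar v_i$ if $m_i=2$, $\bar u_i^2\simeq 0$ otherwise: if both $a$ and $b$ involve a factor $\bar\varphi$, the product is null-homotopic by \eqref{relationen3} and $ab=0$; if exactly one $\bar\varphi$ occurs, the remaining $\bar u_i,\bar v_i$ are commuted next to it and absorbed via \eqref{relationen1} and \eqref{relationen2}, yielding $\pm\bar\varphi_\gamma$ or $0$; if no $\bar\varphi$ occurs, graded commutativity and the $\bar u_i^2$-relations bring the positive monomial into normal form. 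By Corollary~\ref{allekozrelationen} each of these moves is realized by a homotopy which is an $I$-map, so composing them produces an $I$-map $h_{a,b}$ with $dh_{a,b}=f_1(a)f_1(b)-f_1(ab)$. Set $f_2(a,b)=h_{a,b}$ and extend $k$-bilinearly. Since a $k$-linear combination of $I$-maps is an $I$-map, $f_2(x,y)$ is an $I$-map for all $x,y$, while $df_2(x,y)=f_1(x)f_1(y)-f_1(xy)$ holds by bilinearity.

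The conclusion is then exactly the argument of Proposition~\ref{ilemma}: for homogeneous $a,b,c$ the value $m(a,b,c)$ is the cohomology class of the cocycle
\[ -(-1)^{|a|}f_1(a)f_2(b,c)+f_2(ab,c)-f_2(a,bc)+f_2(a,b)f_1(c), \]
which is an $I$-map because the $I$-maps form a two-sided ideal of the endomorphism algebra of $\hat P$; by Proposition~\ref{nulllemma} its class is zero, so $m=0$ and hence $\gamma_G=0$.

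The main obstacle is the middle step: one must verify that the reduction of $f_1(a)f_1(b)$ to $f_1(ab)$ really is achievable purely through moves covered by Corollary~\ref{allekozrelationen}, with the signs matching the chosen values of $f_1$, and that it terminates in \emph{precisely} $f_1(ab)$ rather than some other cocycle merely representing the same cohomology class. This requires the normal-form bookkeeping for $\Tate^*_{kG}(k,k)$ and, through Corollary~\ref{allekozrelationen}, rests on the lifting property of Theorem~\ref{flemma2} to upgrade the relevant homotopies to $I$-maps; the hypotheses $r\geq 3$ and $m_i\neq 3$ are exactly what make this upgrade possible, since for $m_i=3$ the homotopy for $\bar u_i^2$ fails to be an $I$-map, and for $r=2$ the homotopies arising in Proposition~\ref{phifphigkomm} are only $J_{r-2}$-maps and hence not $I$-maps.
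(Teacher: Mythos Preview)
Your proposal is correct and follows essentially the same approach as the paper: define $f_1$ on the basis of normal-form monomials and the $\varphi_\alpha$, invoke Corollary~\ref{allekozrelationen} to obtain $I$-map homotopies $f_2(a,b)$ between $f_1(a)f_1(b)$ and $f_1(ab)$, and conclude via Proposition~\ref{ilemma}. Your explicit three-case analysis (zero, one, or two $\bar\varphi$ factors) and your discussion of where the hypotheses $r\geq 3$ and $m_i\neq 3$ enter simply spell out what the paper's proof asserts in one line; the only cosmetic discrepancy is the sign in $df_2$, which is immaterial since negating an $I$-map yields an $I$-map.
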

\begin{proof}
Let us construct a representative $m$ for $\gamma_G$. We need to choose a $k$-linear map
\[ f_1:H^*A\longrightarrow A \]
of degree $0$, such that $a\in H^*A$ is mapped to a cocycle $f_1(a)$ in $A$ representing $a$. We do this as follows: For all multi-indices $\beta,\epsilon$ with $\epsilon_i\in\{0,1\}$ (for all $i$) put
\begin{eqnarray*}
 H^*A&\stackrel{f_1}{\longrightarrow}& A \\
 u_1^{\epsilon_1}v_1^{\beta_1}u_2^{\epsilon_2}v_2^{\beta_2}\dots u_r^{\epsilon_r}v_r^{\beta_r}
 & \mapsto & \bar{u}_1^{\epsilon_1}\bar{v}_1^{\beta_1}\bar{u}_2^{\epsilon_2}\bar{v}_2^{\beta_2}\dots \bar{u}_r^{\epsilon_r}\bar{v}_r^{\beta_r} 
\end{eqnarray*}
and for all multi-indices $\alpha$
\begin{eqnarray*}
 H^*A&\stackrel{f_1}{\longrightarrow}& A \\
 \varphi_\alpha
 & \mapsto & \bar{\varphi}_\alpha.
\end{eqnarray*}
Now we are looking for a $k$-linear map 
\[ f_2:H^*A\longrightarrow A \]
of degree $-1$ with the property $df_2(x,y)=f_1(xy)-f_1(x)f_1(y)$ for all homogeneous $x,y\in H^*A$. By construction of $f_1$ we can apply Corollary~\ref{allekozrelationen} to the cocycles $f_1(xy)$ and $f_1(x)f_1(y)$. This guarantees the existence of a suitable $f_2(x,y)$ which is an $I$-map. From Proposition~\ref{ilemma} we get $m=0$, and hence $\gamma_G=0$.
\end{proof}

Now we are going to investigate the case $r=2$.
\begin{satz} \label{nichtrealisierbar}
Let $r=2$. Then we have
\begin{align}\label{masseyklammer}
 \left< v_2,\varphi_{(0,1)},v_1 \right> = u_1 \in\Tate^1_{kG}(k,k) 
\end{align}
without indeterminacy. In particular $\gamma_G\neq 0$, and the $\hat{H}^*(G,k)$-module
\[ X=\hat{H}^*(G,k)/v_2\hat{H}^*(G,k) \]
is (by Proposition~\ref{cokernlemma}) not a direct summand of a realisable module.
\end{satz}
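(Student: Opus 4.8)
The plan is to prove the Massey product identity \eqref{masseyklammer} by a direct computation with the explicit cocycles and homotopies of Section~\ref{egeneralgroups}, and then to deduce both assertions of the theorem from Proposition~\ref{cokernlemma} and Theorem~\ref{bkstheorem}. I would begin with the formal points. The triple product $\langle v_2,\varphi_{(0,1)},v_1\rangle$ is defined, since the multiplicative relations of Proposition~\ref{kozrelationen1} give $v_2\varphi_{(0,1)}=\varphi_{(0,-1)}=0$ and $\varphi_{(0,1)}v_1=\varphi_{(-2,1)}=0$ (in each case the relevant entry of the multi-index is $<2$). Its indeterminacy is $v_2\cdot\Tate^{-1}_{kG}(k,k)+\Tate^{-1}_{kG}(k,k)\cdot v_1$; but $\Tate^{-1}_{kG}(k,k)\cong k^{N_{-1}}$ is one-dimensional, spanned by $\varphi_{(0,0)}$, and $v_1\varphi_{(0,0)}=v_2\varphi_{(0,0)}=0$, so the indeterminacy vanishes. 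Hence the Massey product is a single class in $\Tate^1_{kG}(k,k)=ku_1\oplus ku_2$, and it will suffice to identify it as $u_1$.

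Granting \eqref{masseyklammer}, the remaining claims are immediate. Since the indeterminacy is trivial and $u_1\neq 0$, we have $0\notin\langle v_2,\varphi_{(0,1)},v_1\rangle$, so Proposition~\ref{cokernlemma}, applied to the sequence of multiplication maps
\[ \hat{H}^*(G,k)[-2]\xrightarrow{\;v_1\;}\hat{H}^*(G,k)\xrightarrow{\;\varphi_{(0,1)}\;}\hat{H}^*(G,k)[-2]\xrightarrow{\;v_2\;}\hat{H}^*(G,k) \]
(whose successive composites vanish by the relations just used), shows that $X=\coker(v_2\cdot)=\hat{H}^*(G,k)/v_2\hat{H}^*(G,k)$ is not a direct summand of a realisable module. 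By Theorem~\ref{bkstheorem} the image of $\gamma_G$ in $\Ext^{3,-1}_{\hat{H}^*(G,k)}(X,X)$ is then non-zero, whence $\gamma_G\neq 0$.

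The core of the proof is the computation of $\langle v_2,\varphi_{(0,1)},v_1\rangle$. I would use the representatives $\bar v_i=\Phi(\bar y_i)$ and $\bar\varphi_{(0,1)}=\Psi(\bar y_1^{-1}\bar x_1,\bar y_2^{-1})$ of Section~\ref{egeneralgroups}. By Proposition~\ref{psiphikombo} both composites $\bar v_2\bar\varphi_{(0,1)}$ and $\bar\varphi_{(0,1)}\bar v_1$ are $J_1$-maps, hence (as $J_1\subseteq I$) $I$-maps; by Proposition~\ref{nulllemma} this re-confirms that they are null-homotopic, but evaluating the Massey product requires \emph{explicit} null-homotopies $\bar S$ of $\bar v_2\bar\varphi_{(0,1)}$ and $\bar T$ of $\bar\varphi_{(0,1)}\bar v_1$. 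Written out through the definitions of $\Phi$ and $\Psi$, these two composites are supported in only a handful of degrees, where they are simple combinations of the shift maps $\bar y_i^{\pm 1}$, the cocycle $\bar x_1$, and the cyclic-group data of Section~\ref{szyklischegruppen}; from this one exhibits $\bar S$ and $\bar T$ by inspection (they cannot both be $I$-maps, since otherwise $\bar S\bar v_1-\bar v_2\bar T$ would be an $I$-map of trivial $\class$ — this is exactly where $r=2$ differs from $r\geq 3$). Then $\bar S\bar v_1-\bar v_2\bar T$ is a cocycle representing $\langle v_2,\varphi_{(0,1)},v_1\rangle$, and applying $\class$ to its degree-$0$ component and matching against the basis $\{u_1,u_2\}$ of $\Tate^1_{kG}(k,k)\cong k^{M_1}$ yields $u_1$. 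As an independent check on which of $u_1,u_2$ occurs, one may use the juggling formula: since $v_1\varphi_{(1,0)}=0$, it gives $\langle v_2,\varphi_{(0,1)},v_1\rangle\,\varphi_{(1,0)}=v_2\,\langle\varphi_{(0,1)},v_1,\varphi_{(1,0)}\rangle\in v_2\cdot\Tate^{-3}_{kG}(k,k)=k\varphi_{(0,0)}$, and comparing with $u_1\varphi_{(1,0)}=\pm\varphi_{(0,0)}\neq 0$ and $u_2\varphi_{(1,0)}=0$ detects the $u_1$-component, while the $u_2$-component is killed symmetrically by multiplying with $\varphi_{(0,1)}$ in place of $\varphi_{(1,0)}$.

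The step I expect to be the main obstacle is precisely the explicit construction of $\bar S$ and $\bar T$ together with the sign and degree-$0$ bookkeeping needed to evaluate $\class(\bar S\bar v_1-\bar v_2\bar T)$: one must pin the homotopies down concretely enough for this. This is also the point where the dichotomy with $r\geq 3$ surfaces — there all the homotopies entering the construction of $\gamma_G$ can be taken to be $I$-maps (Corollary~\ref{allekozrelationen}), forcing the class to vanish, whereas for $r=2$ no such choice exists and the surviving secondary obstruction is $u_1$.
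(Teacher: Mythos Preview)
Your approach is essentially the same as the paper's: verify that the Massey product is defined with zero indeterminacy, pick the explicit representatives $\bar v_i=\Phi(\bar y_i)$ and $\bar\varphi_{(0,1)}=\Psi(\bar y_1^{-1}\bar x_1,\bar y_2^{-1})$, choose null-homotopies, and evaluate $\class$ on the resulting degree-$1$ cocycle. The deductions about $X$ and $\gamma_G$ from Proposition~\ref{cokernlemma} and Theorem~\ref{bkstheorem} are exactly as in the paper.

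The one simplification you are missing, and which dissolves the obstacle you flag, is that you do \emph{not} need both homotopies explicitly. In the paper only the homotopy for $\bar\varphi_{(0,1)}\bar v_1$ (your $\bar T$) is written down, and in fact only its component $\bar T_1:\hat P_1\to\hat P_2$. The other homotopy $\bar S$ (the paper's $\bar\tau$) for $\bar v_2\bar\varphi_{(0,1)}$ may be taken arbitrary, because $\class(\bar S\bar v_1)$ is computed from the composite
\[
\hat P_1\xrightarrow{(\bar v_1)_{-1}}\hat P_{-1}\xrightarrow{\bar S_0}\hat P_0\xrightarrow{\ \epsilon\ }k,
\]
and $(\bar v_1)_{-1}$ lies in the range $k\in\{-1,\dots,-n\}$ of the definition of $\Phi$, where $\Phi(\bar y_1)$ is a $J_{r-1}=J_1\subseteq I$-map; hence $\class(\bar S\bar v_1)=0$ regardless of $\bar S$. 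So the whole computation reduces to writing down $\bar T_1$ (the paper takes $(a^1,a^2)\mapsto -(\bar x_1 a^1,\bar y_2^{-1}a^2)$ on the summand with $|a^1|=1$, $|a^2|=0$), composing with $(\bar v_2)_0$, and recognising the result as $-\bar u_1$ in degree $0$. This replaces your ``pin down both homotopies concretely enough'' by a single one-line lift.

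A small caveat on your juggling-formula remark: as stated it only shows $\langle v_2,\varphi_{(0,1)},v_1\rangle\cdot\varphi_{(1,0)}\in k\varphi_{(0,0)}$, which is compatible with the answer but does not by itself determine the $u_1$-coefficient (zero is also in $k\varphi_{(0,0)}$). It is a consistency check, not an independent computation.
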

\begin{proof}
Since $v_2\,\varphi_{(0,1)}=\varphi_{(0,1)}v_1=0$, the Massey product is defined. The indeterminacy is given by $v_2\,\hat{H}^{-1}(G)+\hat{H}^{-1}(G)\,v_1=0$. 
To prove equation \eqref{masseyklammer} we choose the cocycles $\bar{v}_1,\bar{v}_2$ and $\bar{\varphi}_{(0,1)}$ as representatives for $v_1,v_2$ and $\varphi_{(0,1)}$, respectively. By definition of the Massey product, we have to choose null-homotopies for $\bar{v}_2\bar{\varphi}_{(0,1)}$ and $\bar{\varphi}_{(0,1)}\bar{v}_1$. Consider $\bar{\varphi}_{(0,1)}\bar{v}_1$: In degree $1$ this (i.e.~the map $\hat{P}_1\longrightarrow \hat{P}_{-1}\longrightarrow \hat{P}_1$) is given by
\begin{eqnarray*}
a^1\otimes a^2&\stackrel{\bar{v}_1}{\longmapsto}& \begin{cases} \bar{y}_1(a^1)\otimes\partial(a^2) & 
      \text{if $|a^1|=1, |a^2|=0$} \\         0 & \text{otherwise} \end{cases} \\
              &\stackrel{\bar{\varphi}_{(0,1)}}{\longmapsto}& \begin{cases} -\bar{y}_1^{-1}\bar{x}_1\bar{y}_1(a^1)\otimes \bar{y}_2^{-1}\partial (a^2) & \text{if $|a^1|=1, |a^2|=0$}  \\         0 & \text{otherwise} \end{cases} 
\end{eqnarray*}

\begin{minipage}[t][0cm]{11.5cm}
We can lift this to a map $\hat{P}_1\longrightarrow \hat{P}_2$ as follows: Define $\bar{\sigma}_1$ as
\[ a^1\otimes a^2\mapsto \begin{cases} -\bar{x}_1(a^1)\otimes \bar{y}_2^{-1}(a^2) & \text{if $|a^1|=1, |a^2|=0$} \\ 0 & \text{otherwise} \end{cases}
\]
\end{minipage}
$\xymatrix{
\hat{P}_1 \ar[d]_{\bar{\varphi}_{(0,1)}\bar{v}_1}\ar[dr]^{\bar{\sigma}_1} \\
\hat{P}_1 & \hat{P}_2\ar[l]^{\partial} } $

Since $\partial(-\bar{x}_1(a^1)\otimes \bar{y}_2^{-1}(a^2))=-\bar{x}_1(a^1)\otimes \partial\bar{y}_2^{-1}(a^2)$ we see that the diagram to the left commutes. 
In particular we can extend $\bar{\sigma}_1$ to a null-homotopy $\bar{\sigma}$ of $\bar{\varphi}_{(0,1)}\bar{v}_1$.
Finally, choose any homotopy $\bar{\tau}$ for $\bar{v}_2\bar{\varphi}_{(0,1)}$. 

Now we need to determine the class of the cocycle
$\bar{\tau}\bar{v}_1-\bar{v}_2\bar{\sigma}$.
Since
\[ \class(\bar{\tau}\bar{v}_1-\bar{v}_2\bar{\sigma})=\class(\bar{\tau}\bar{v}_1)-\class(\bar{v}_2\bar{\sigma}), \]
we can consider the summands separately. The map in degree $0$ determines the class uniquely; we therefore have a look at
$\hat{P}_1 \stackrel{\bar{v}_1}{\longrightarrow} \hat{P}_{-1} \stackrel{\bar{\tau}}{\longrightarrow} \hat{P}_0.$
By definition, $\bar{v}_1$ is a $J_1$-map. Therefore, it is also an $I$-map, and $\class(\bar{\tau}\bar{v}_1)=0$. Next consider
$\hat{P}_1 \stackrel{\bar{\sigma}_1}{\longrightarrow} \hat{P}_{2} \stackrel{\bar{v}_2}{\longrightarrow} \hat{P}_0$.
This composition is given by
\begin{align*}
a^1\otimes a^2&\stackrel{\bar{\sigma}_1}{\longmapsto} \begin{cases} -\bar{x}_1(a^1)\otimes \bar{y}_2^{-1}(a^2) & \text{if $|a^1|=1, |a^2|=0$} \\ 0 & \text{otherwise} \end{cases} \\
&\stackrel{\bar{v}_2}{\longmapsto} \begin{cases} 
-\bar{x}_1(a^1)\otimes a^2 & \text{if $|a^1|=1, |a^2|=0$} \\ 0 & \text{otherwise.} \end{cases}
\end{align*}
This is the same as $-\bar{u}_1:\hat{P}_1\longrightarrow \hat{P}_0$. Altogether we have
\[ \class(\bar{\tau}\bar{v}_1-\bar{v}_2\bar{\sigma})=\class(\bar{\tau}\bar{v}_1)-\class(\bar{v}_2\bar{\sigma})=0-\class(-\bar{u}_1)=u_1, \]
which had to be shown.
\end{proof}

\bemerkung
The proof of Theorem~\ref{nichtrealisierbar} might become somewhat clearer as soon as the maps used are given as matrices. We have the isomorphism
\[ \hat{P}_n=L^{M_n}\cong L^{n+1}. \]
Here we identify the summand on the left hand side corresponding to a multi-index $(a,b)$ with the summand on the right hand side corresponding to the index $a+1$. The map $\bar{\varphi}_{(0,1)}\,\bar{v}_1$ is given in degree $1$ by
\newcommand{\vla}[2]{\stackrel{#1}{\xrightarrow{\mspace{#2mu}}}}
\[
\begin{array}{ccccc}
 \hat{P}_1 &\vla{\bar{v}_1}{90}& \hat{P}_{-1} &\vla{\bar{\varphi}_{(0,1)}}{70} & \hat{P}_1 \\[6pt]
 \parallel & & \parallel & & \parallel \\
 L^2 &\vla{\smatrix{z_2^{m_2-1} & 0}}{90} & L & \vla{\smatrix{0 \\ -1}}{70} & L^2 
\end{array}
\]
The composition can be written as the matrix $\smatrix{0 & 0 \\ -z_2^{m_2-1} & 0}$.
The lifting diagram occurring in the proof is of the form
\[ \xymatrix@R=30pt@C=50pt{
L^2\ar[d]_{\bigl(\begin{smallmatrix} 0 & 0 \\ -z_2^{m_2-1} & 0 \end{smallmatrix}\bigr)}
\ar[dr]^{\Bigl(\begin{smallmatrix} 0 & 0 \\ 0 & 0 \\ -1 & 0 \end{smallmatrix}\Bigr)} \\
L^2 & L^3 \ar[l]^{\partial=\Bigl(\begin{smallmatrix} z_1^{m_1-1} & -z_2 & 0 \\ 0 & z_1 & z_2^{m_2-1} \end{smallmatrix}\Bigr)} } \]
Composing the lift with the degree $0$-part of $\bar{v}_2$, i.e.~
\[ \bigl(\bar{v}_2\bigr)_0:L^3\longrightarrow L, \]
which is the projection onto the third factor, we get (up to a sign) the projection $L^2\rightarrow L$ onto the second factor, which in turn corresponds to $-\bar{u}_1$ in degree $0$.\\

Finally, let us consider the case $m_i=3$ for some $i$ (and $r$ is arbitrary). We may assume that $i=1$.
\begin{satz} \label{nichtrealisierbar3}
Suppose $m_1=3$. Then
\begin{align}\label{masseyklammer3}
 0 \not\in \left< u_1,u_1,u_1 \right>  \subseteq\Tate^2_{kG}(k,k) 
\end{align}
Hence, $\gamma_G\neq 0$ and the $\hat{H}^*(G,k)$-module
\[ X=\hat{H}^*(G,k)/u_1\hat{H}^*(G,k) \]
is (by Proposition~\ref{cokernlemma}) not a direct summand of a realisable module.
\end{satz}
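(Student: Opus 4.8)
The plan is to exhibit an explicit nonzero element of the triple Massey product $\left<u_1,u_1,u_1\right>$ and then to check that it avoids the indeterminacy. Concretely, I would compute $m(u_1,u_1,u_1)$ for a representative $m$ of $\gamma_G$ built as in \S\ref{kanonischel}; by \cite{bks}, Lemma~5.14, this class lies in $\left<u_1,u_1,u_1\right>$. The case $r=1$ is nothing but the computation of \S\ref{szyklischegruppen} for $G=C_3$ (there $\Phi=\id$ and $v_1=y_1$), so I assume $r\geq 2$ below; the argument is otherwise identical.

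First I would set $f_1(u_1)=\bar u_1=\Phi(\bar x_1)$. For $m_1=3$ the cocycle $\bar x_1^2\in A^\vdx{1}$ is null-homotopic via $\bar q_1$ (\S\ref{szyklischegruppen}), and since here the exponent $n-3$ vanishes one checks directly that $\bar q_1\bar x_1+\bar x_1\bar q_1=\bar y_1$ \emph{on the nose} (both sides are the identity in every degree). Because $\Phi$ is $k$-linear, commutes with the differential (Proposition~\ref{phidkomm}) and is multiplicative on positive degrees (Proposition~\ref{phimultiplikativ}), we get $\bar u_1^2=\Phi(\bar x_1^2)=\Phi(d\bar q_1)=d\Phi(\bar q_1)$, and since $u_1^2=0$ (as $m_1\neq 2$) I may take $f_2(u_1,u_1)=\pm\Phi(\bar q_1)$ as the required homotopy for $\mp\bar u_1^2$. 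Substituting into \eqref{mkonstruktion} and using $u_1^2=0$ to kill the two middle terms, $m(u_1,u_1,u_1)$ is represented by $\pm\bigl(\Phi(\bar q_1)\bar u_1+\bar u_1\Phi(\bar q_1)\bigr)=\pm\Phi(\bar q_1\bar x_1+\bar x_1\bar q_1)=\pm\Phi(\bar y_1)=\pm\bar v_1$, so that $m(u_1,u_1,u_1)=\pm v_1\in\Tate^2_{kG}(k,k)$, which is nonzero.

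Next I would compute the indeterminacy of $\left<u_1,u_1,u_1\right>$, namely $u_1\,\Tate^1_{kG}(k,k)+\Tate^1_{kG}(k,k)\,u_1$. By the structure of $\Tate^*_{kG}(k,k)$ established above, $\Tate^1_{kG}(k,k)=\bigoplus_{i=1}^{r}k\,u_i$; as $u_1^2=0$, the indeterminacy is the $k$-span of $u_1u_2,\dots,u_1u_r$. Together with $v_1$ these are distinct members of the $k$-basis $\{v_1,\dots,v_r\}\cup\{u_iu_j\mid i<j\}$ of $\Tate^2_{kG}(k,k)$, so $\pm v_1$ lies outside the indeterminacy; hence $0\notin\left<u_1,u_1,u_1\right>$, which is \eqref{masseyklammer3}. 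From this, $\gamma_G\neq 0$ follows: if $m=\delta g$ for some Hochschild $(2,-1)$-cochain $g$, then $u_1^2=0$ gives $m(u_1,u_1,u_1)=\pm\bigl(u_1\,g(u_1,u_1)+g(u_1,u_1)\,u_1\bigr)$, and this vanishes because $u_1$ and $g(u_1,u_1)\in\hat H^1(G,k)$ both have odd degree and so anticommute in the graded-commutative ring $\hat H^*(G,k)$ --- contradicting $m(u_1,u_1,u_1)=\pm v_1\neq 0$. (Alternatively, $\gamma_G\neq 0$ follows from the non-realisability claim via Theorem~\ref{bkstheorem}.) Finally, applying Proposition~\ref{cokernlemma} to the three composable multiplications by $u_1$,
\[ \hat H^*(G,k)[-3]\xrightarrow{u_1}\hat H^*(G,k)[-2]\xrightarrow{u_1}\hat H^*(G,k)[-1]\xrightarrow{u_1}\hat H^*(G,k), \]
whose successive composites vanish since $u_1^2=0$, the relation $0\notin\left<u_1,u_1,u_1\right>$ shows that $X=\coker(u_1)=\hat H^*(G,k)/u_1\hat H^*(G,k)$ is not a direct summand of a realisable module.

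I expect the main obstacle to be the sign and normalisation bookkeeping over a field of characteristic $3$: verifying that $\bar q_1\bar x_1+\bar x_1\bar q_1$ is genuinely \emph{equal} to $\bar y_1$ (not merely homotopic), tracking the signs in \eqref{mkonstruktion} and in the Hochschild differential $\delta$, and confirming that the matric Massey-product calculus of \S\ref{secquagruppe} --- in particular Proposition~\ref{cokernlemma} and the juggling formula --- set up there in characteristic $2$, carries over, up to the evident sign changes, to characteristic $3$.
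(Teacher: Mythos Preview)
Your proof is correct and follows essentially the same route as the paper: both choose $\Phi(\bar q_1)$ as the null-homotopy for $\bar u_1^2$, use the multiplicativity of $\Phi$ to reduce to the identity $\bar q_1\bar x_1+\bar x_1\bar q_1=\bar y_1$ from \S\ref{szyklischegruppen} (which for $n=3$ is multiplication by $z^{n-3}=1$), and conclude that $v_1$ lies in $\langle u_1,u_1,u_1\rangle$ but not in the indeterminacy $u_1\Tate^1_{kG}(k,k)$. Your extra detail on the indeterminacy and the direct Hochschild-coboundary argument for $\gamma_G\neq 0$ are sound additions, but not needed beyond what the paper does.
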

\begin{proof}
The indeterminacy is given by $u_1 \Tate^1_{kG}(k,k) \not\ni v_1$. We will show that $v_1\in \left< u_1,u_1,u_1 \right>$; then the result follows.
We choose $\bar{u}_1$ as a representing cocycle for $u_1$. Then $\bar{u}_1^2=\Phi(\bar{x}_1^2)$ is the boundary of $\Phi(\bar{q}_1)$ by Proposition~\ref{phidkomm}. But then
\[ \Phi(\bar{q}_1)\Phi(\bar{x}_1)+\Phi(\bar{x}_1)\Phi(\bar{q}_1) = \Phi(\bar{q}_1\bar{x}_1+\bar{x}_1\bar{q}_1) = \Phi(\bar{y}_1) = \bar{v}_1, \]
and therefore $v_1\in \left< u_1,u_1,u_1 \right>$.
\end{proof}
Note that Theorem~\ref{rsatzallgemein} follows from Theorems~\ref{realisierbar}, ~\ref{nichtrealisierbar}, and \ref{nichtrealisierbar3}.

\end{document}